\documentclass[11pt,a4paper,reqno]{amsart}
\usepackage[a4paper,left=2.0cm,right=2.0cm,top=2.7cm,bottom=2.7cm]{geometry}  
\newtheorem{theorem}{Theorem}[section] 
\newtheorem{lemma}[theorem]{Lemma}
\newtheorem{definition}[theorem]{Definition}
\newtheorem{proposition}[theorem]{Proposition} 
\newtheorem{remark}[theorem]{Remark} 
\usepackage{mathrsfs,amssymb,amsmath,amsthm,stmaryrd,graphicx,mathtools,xfrac}
\usepackage{upgreek}
\usepackage{epstopdf}
\usepackage[amsstyle]{cases}
\usepackage{enumitem}
\usepackage[linkcolor=blue,citecolor=cyan,colorlinks]{hyperref}
\usepackage[utf8]{inputenc}
\usepackage[T1]{fontenc}
\usepackage{xcolor,ulem}   
\numberwithin{equation}{section}
\allowdisplaybreaks
\usepackage[integrals]{wasysym}
\usepackage{subcaption} 

\renewcommand{\star}{\ast} 
\renewcommand{\ge}{\geqslant}
\renewcommand{\le}{\leqslant}
\renewcommand{\int}{\varint}
\renewcommand{\to}{\rightarrow} 

\newcommand{\vare}{\varepsilon}  
\newcommand{\norm}[1]{\left\| #1 \right\|} 
\newcommand{\module}[1]{\left| #1 \right|} 
\newcommand{\Brace}[1]{\left\{ #1 \right\}} 
\newcommand{\Paren}[1]{\left( #1 \right)}  
\newcommand{\Bracket}[1]{\left[ #1 \right]}  

\newcommand{\cov}{U}
\newcommand{\loc}{\theta}
\newcommand{\pr}{\lambda}

\newcommand{\DT}{\mathscr{D}_t }
\newcommand{\pl}{\partial}
\newcommand{\lpe}{\Delta} 
\newcommand{\divergence}{\nabla\cdot }  
\newcommand{\vorticity}{\nabla\times } 

\newcommand{\mc}{\mathscr{A}}
\newcommand{\sff}{\mathscr{B}}
\newcommand{\bdv}{\mathscr{U}}
\newcommand{\LB}{\Delta_{\sff}} 
\newcommand{\Bdnabla}{\overline{\nabla} } 
\newcommand{\Bddiv}{\Bdnabla\cdot }
\newcommand{\normal}{n}  
\newcommand{\parOmega}{\partial\Omega}
\newcommand{\parXi}{\partial\Xi}
\newcommand{\radi}{\mathscr{R}}

\newcommand{\err}{\mathfrak{R}} 
\newcommand{\one}{u}
\newcommand{\energy}{\mathscr{E}}
\newcommand{\Energy}{\mathfrak{E}}
\newcommand{\assone}{\mathscr{P}}
\newcommand{\asstwo}{\mathscr{Q}}   
\newcommand{\bd}{C_\dagger}
\begin{document}
\title[Low-regularity blow-up and self-intersection]{Low-regularity a priori estimates, blow-up criterion, and self-intersection singularities for free-boundary ideal magnetohydrodynamics with surface tension} 
\author{Tao Luo and Siqi Yang}  
\address{Tao Luo\newline
Department of Mathematics, City University of Hong Kong, Hong Kong, China} 
\address{Siqi Yang\newline School of Mathematics, Statistics and Mechanics, Beijing University of Technology, Beijing 100124, China}
\email{taoluo@cityu.edu.hk}
\email{siqiyang@bjut.edu.cn}
\begin{abstract} 
We study the three-dimensional incompressible free-boundary ideal magnetohydrodynamic (MHD) equations with surface tension and a closed free surface. Our first result establishes $H^3$ a priori estimates in general bounded domains, without graph structure, periodicity, or simple connectedness; in particular, for surface-tension ideal MHD in general domains this lowers the previously available threshold from $H^6$. Compared with the free-boundary problem for incompressible Euler equations, the feature is that the Lorentz force enters the elliptic pressure estimates, and the frozen-in magnetic field must preserve the tangential boundary constraint. Using these estimates, we prove a refined finite-time blow-up criterion for $H^3$ solutions that separates topological self-intersection, loss of boundary regularity, blow-up of the normal velocity, and interior MHD blow-up. The interior condition has an intrinsic magnetic-field asymmetry: besides $\norm{\nabla u}_{L^\infty}$ and $\norm{\nabla h}_{L^\infty}$, with $u$ and $h$ denoting the velocity and magnetic field, respectively, it requires the additional control of $\norm{\nabla^2h}_{L^2}$, a quantity arising from the Lorentz-force contribution to the pressure estimates and having no velocity analogue. Finally, we construct regular initial data whose solutions develop finite-time boundary self-intersection while the Sobolev regularity and curvature remain controlled up to the contact time. Thus, neither surface tension nor the ideal magnetic coupling precludes topological self-intersection of the free boundary.
\end{abstract} 
\keywords{Free boundary problem, incompressible ideal magnetohydrodynamics, regularity, blow-up, self-intersection singularity, surface tension}
\subjclass[2020]{35Q35, 35R35, 35B44, 76B03, 76B45}
\maketitle
\tableofcontents

\section{Introduction} 
We consider the following three-dimensional incompressible free-boundary ideal magnetohydrodynamic (MHD) equations with surface tension: 
\begin{subnumcases}{\label{eq_MHD}} 
\DT u+\nabla p=h\cdot\nabla h,&\text{in}\ \Omega_t,\label{eq_MHD1}\\
\DT h=h\cdot\nabla u,&\text{in}\ \Omega_t,\label{eq_MHD2}\\ 
\divergence u=0,\quad\divergence h=0,&\text{in}\ \Omega_t,\label{eq_MHD3}\\
u_n=\bdv,\quad h_n=0,\quad  
p=\mc,
&\text{on}\  \parOmega_t,\label{eq_MHD4}\\ 
u(\cdot,0)=u_0,\quad h(\cdot,0)=h_0,&\text{in}\ \Omega_0,\label{eq_MHD5}
\end{subnumcases}
where $t>0$ denotes time, $u=u(x,t)$ is the velocity field of the fluid, $h=h(x,t)$ is the magnetic field, and $p=p(x,t)$ is the scalar total pressure, including $\frac12\module{h(x,t)}^2$. For each time $t$, the fluid region is denoted by $\Omega_t$, a bounded domain in $\mathbb{R}^3$ with moving boundary $\parOmega_t$. We consider the case in which the free boundary $\parOmega_t$ is a closed surface. The material derivative is $\DT\coloneqq\partial_t+u\cdot\nabla$. On $\parOmega_t$, $\normal=\normal(x,t)$ represents the unit outer normal and $\mc$ denotes the mean curvature. Without loss of generality, we assume the surface tension coefficient is $1$.  
$\bdv$ represents the normal velocity of $\parOmega_t$, which is equal to the normal component of the velocity field $u_n\coloneqq u\cdot\normal$, and $h_n\coloneqq h\cdot\normal$ denotes the normal component of the magnetic field. Additionally, $u_0,h_0$, and $\Omega_0$ are the prescribed initial data.   

Two coupled difficulties are central to the present work. First, surface tension ties the pressure to the geometry of the moving surface through the boundary condition $p=\mc$, so that elliptic pressure estimates and boundary regularity estimates must be closed simultaneously. This becomes especially delicate near a possible self-intersection, where the limiting boundary is no longer an embedded hypersurface and the normal vector, second fundamental form, and mean curvature cannot be interpreted as single-valued geometric quantities at the contact point. Second, unlike the Euler equations, the ideal MHD system contains a frozen-in magnetic field satisfying both the interior constraint $\divergence h=0$ and the boundary constraint $h\cdot\normal=0$. Thus, any self-intersection singularity construction must preserve the tangential magnetic constraint and control the Lorentz force $h\cdot\nabla h$ uniformly as two boundary branches approach each other.
\subsection{Historical works}

\subsubsection{Well-posedness for incompressible free-boundary ideal MHD equations}
In the absence of surface tension, the Taylor sign condition is typically required for the well-posedness of the incompressible free-boundary ideal MHD equations. Under this condition, Hao and the first author \cite{Hao2014} established the a priori estimates for domains homeomorphic to a ball, while demonstrating ill-posedness when the condition fails \cite{Hao2020}. For fluid domains with graph assumptions, \cite{Luo2020} derived the a priori estimates in $H^{\frac52+\delta}$ (with $\delta\in(0,\frac12)$) for small fluid volumes, and local well-posedness was established in Lagrangian coordinates \cite{Gu2019} and Eulerian coordinates \cite{Zhao2024}. For general bounded domains without simple connectedness, periodicity, or graph assumptions, \cite{Ifrim2024a} established local well-posedness for low-regularity $H^s$ solutions ($s>\frac n2+1$, where $n$ is the spatial dimension). For the well-posedness of the plasma-vacuum and current-vortex sheet problems, we refer to \cite{Hao2017,Morando2014,Sun2019,Sun2018}.

For fluid domains with graph boundaries and surface tension, \cite{Luo2021} first derived the a priori estimates in $H^{\frac72}$. Subsequently, \cite{Gu2022} established the zero surface tension limit, and \cite{Gu2023} proved local well-posedness in $H^{\frac92}$ via an artificial viscosity approximation. For general domains without graph assumptions, Liu and the first author \cite{Liu2026}, together with \cite{Liu2023}, established the local well-posedness and the zero surface tension limit for the plasma-vacuum problems (where the plasma region is simply connected), requiring $H^{\frac92}$ regularity. See \cite{Liu2023a} for the related current-vortex sheet problem. If the simple connectedness assumption is further removed, the only available result is the a priori estimates for $H^6$ solutions established by Hao and the second author \cite{Hao2025}. Although local well-posedness at low regularity has been achieved for general fluid domains without surface tension \cite{Ifrim2024a,Ifrim2025}, in the presence of surface tension the corresponding $H^3$ local well-posedness theory is available only for the free-boundary Euler equations \cite{Shatah2011}. Thus, for surface-tension ideal MHD in general bounded domains without the above topological and geometric restrictions, the $H^3$ local well-posedness problem remains open.  

\subsubsection{Blow-up criteria}
The global well-posedness of the incompressible free-boundary ideal MHD equations remains open. Available global results concern dissipative variants incorporating viscosity \cite{Wang2019} or magnetic diffusion \cite{Wang2021}. In the ideal setting, blow-up theory has mainly sought criteria that separate interior loss of regularity from geometric degeneration of the free surface. In the absence of surface tension, such criteria have been obtained for the free-boundary Euler equations in graph domains \cite{Wang2021a}, domains homeomorphic to a ball \cite{Ginsberg2021}, and general bounded domains \cite{Ifrim2025}. For the ideal MHD equations, Fu, Hao, the second author, and Zhang established a Beale-Kato-Majda (BKM) type blow-up criterion for domains homeomorphic to a ball \cite{Fu2023}; building upon the Euler analysis in \cite{Ifrim2025}, \cite{Ifrim2024a} obtained a blow-up criterion in general bounded domains under identical regularity conditions. 

With surface tension, the pressure boundary condition involves the mean curvature and the blow-up analysis must also track the geometry of the evolving hypersurface. In this direction, \cite{Julin2024} established the a priori estimates and a blow-up criterion for regular solutions of the Euler equations coupled with an electric field, and \cite{Luo2024} established a BKM-type criterion for regular solutions (in $H^s$, $s>\frac92$) of the Euler equations in graph domains. For ideal MHD with surface tension, the only existing result is the $H^6$ blow-up criterion of Hao and the second author in general bounded domains \cite{Hao2025}. Earlier criteria based on fixed graph representations or Lagrangian coordinates do not naturally capture the approach of two distant boundary portions toward self-intersection. The work \cite{Hao2025} introduced a method of altering reference surfaces to include such geometric degeneration in the blow-up analysis. However, its regularity requirement is high, and the criterion does not reduce to an Euler-type BKM criterion when the magnetic field vanishes. Subsequently, for the free-boundary Euler equations with surface tension, Hao and the present authors reduced the regularity threshold and obtained a refined blow-up theory, both with and without the simple connectedness assumption \cite{Hao2025a}.  
\subsubsection{Self-intersection singularities}
Self-intersection singularities provide a different, genuinely topological mechanism for the failure of a free boundary evolution. They can be broadly classified into splash singularities, where distinct boundary points meet at one or several points, and splat singularities, where two boundary portions meet along a set of positive measure. Their defining feature is that local parametrizations may remain regular up to the contact time, while the global boundary ceases to be embedded. Such singularities were first constructed for the 2D irrotational water wave problem without surface tension \cite{Castro2013}. For rotational fluids, \cite{Coutand2014} proved that the 3D incompressible Euler equations can develop splash or splat singularities under the Taylor sign condition. 

The role of surface tension in self-intersection problems is more subtle. On the one hand, it supplies a curvature force that tends to regularize the interface. On the other hand, it does not automatically preclude topological contact. For the 2D irrotational water wave equations with surface tension, \cite{Castro2012b} proved the existence of splash singular solutions. In the two-phase free-boundary problem of the rotational incompressible Euler equations, \cite{Fefferman2016} provided boundary regularity conditions under which splash singularities are prevented, while \cite{Cordoba2021} constructed steady self-intersecting singular solutions in the irrotational setting. Furthermore, \cite{Coutand2019a} considered a class of symmetric initial data and proved that either the free boundary self-intersects in finite time, or certain natural norms of the fluid blow up. These results show that the influence of surface tension depends strongly on the equation, the dimension, and the type of interface under consideration.

For MHD, the available self-intersection constructions are much more limited. Previous results have primarily focused on viscous models: Hao and the second author constructed a splash singularity for the 2D viscous MHD equations \cite{Hao2024}, and Hong, the first author, and Zhao \cite{Hong2025} constructed a class of initial data, close to self-intersection, that induces boundary self-intersection for the 2D or 3D viscous MHD equations. These constructions do not extend directly to the ideal system, where there is no parabolic smoothing and the magnetic field is frozen into the flow. In the ideal setting, current results are limited to a splash-squeeze type self-intersection singularity for the 2D plasma-vacuum problem \cite{Cordoba2025}, where the constructed solutions preserve Sobolev regularity but instantaneously lose analyticity at a certain moment. Thus, before the present work, the construction of a 3D self-intersection singularity for the ideal MHD equations remained open, and it was unclear whether the curvature force generated by surface tension could prevent such a singularity.

The present paper addresses these issues at the level needed for blow-up and self-intersection analysis. Although \eqref{eq_MHD} reduces to the free-boundary Euler equations when $h=0$, the results below are not obtained by simply inserting magnetic terms into the Euler argument. The Lorentz force enters the elliptic pressure estimates, the induction equation transports a frozen-in magnetic field, and the boundary constraint $h\cdot\normal=0$ must be preserved even as two boundary branches approach contact. We establish $H^3$ a priori estimates for \eqref{eq_MHD} in general bounded domains and derive a comprehensive blow-up criterion in this MHD framework. We further construct a class of regular initial data whose solutions develop finite-time boundary self-intersection while remaining regular up to the contact time. The construction shows that the surface-tension restoring force and the ideal magnetic coupling can be controlled simultaneously near self-intersection.

\vspace{2mm}
\noindent\textbf{Main contributions.}
The contributions of this paper are threefold.
\begin{enumerate}[label={\rm (\roman*)},topsep=5pt,itemsep=2pt]
\item We establish low-regularity a priori estimates for \eqref{eq_MHD}: the estimates are obtained at the $H^3$ level in general bounded three-dimensional domains, without assuming graph structure, periodicity, or simple connectedness. This lowers the regularity threshold from the previously available $H^6$ theory in general domains. Although the resulting regularity level agrees with that of the free-boundary Euler theory with surface tension, the estimates require MHD-specific control of the Lorentz force, the pressure structure, and the tangential magnetic boundary condition.
\item We prove a refined finite-time blow-up criterion that includes boundary self-intersection, loss of boundary regularity, blow-up of the normal velocity, and interior MHD quantities in a single framework. A distinctive feature of this criterion is the magnetic-field asymmetry: besides controlling $\norm{\nabla h}_{L^\infty}$, the low-regularity pressure estimates require the additional control of $\norm{\nabla^2h}_{L^2}$, a quantity with no velocity analogue and no counterpart in the Euler reduction. In particular, when the magnetic field vanishes, the criterion reduces to an Euler-type criterion in the same $H^3$ setting.
\item We construct finite-time self-intersection singularities for the 3D ideal MHD equations with surface tension. The construction keeps the magnetic field divergence-free, tangent to the moving boundary, and uniformly controlled on the approximating domains, showing that the curvature boundary condition and the frozen-in magnetic field do not prevent topological contact of the free boundary.
\end{enumerate}
\subsection{Main results} 
To state our main results, we introduce the energy functional $\Energy(t)$ as
\begin{align}
\Energy(t)\coloneqq{}&\norm{\DT^2u}_{L^2(\Omega_t)}^2+\norm{\DT^2h}_{L^2(\Omega_t)}^2+\norm{\DT u}_{H^{\frac32}(\Omega_t)}^2+\norm{\DT h}_{H^{\frac32}(\Omega_t)}^2\nonumber\\
&+\norm{u}_{H^3(\Omega_t)}^2+\norm{h}_{H^3(\Omega_t)}^2+\norm{\Bdnabla\Paren{\DT u\cdot\normal}}_{L^2(\parOmega_t)}^2+1.\label{eq_Energy}
\end{align}
Here, $\Bdnabla$ denotes the tangential differential operator: for a scalar function, 
\begin{equation*}
\Bdnabla (\,\cdot\,)\coloneqq\nabla(\,\cdot\,)-\nabla(\,\cdot\,) \cdot\normal\normal,
\end{equation*} 
and for a vector field $\Bdnabla (\,\cdot\,)\coloneqq \nabla (\,\cdot\,)-\nabla (\,\cdot\,)\normal\otimes\normal$. The tangential divergence is defined as $\Bddiv (\,\cdot\,)\coloneqq \operatorname{Tr}\Bdnabla (\,\cdot\,)$.  Consequently, the second fundamental form and the mean curvature can be expressed, respectively, as
\begin{equation*} 
\sff=\Bdnabla \normal,\ \text{and}\ \mc=\Bddiv\normal.
\end{equation*} 
The free boundary $\parOmega_t$ is parameterized over a smooth compact reference hypersurface $\Gamma=\partial\Omega$. Here, $\Omega\subset\mathbb{R}^3$ is a bounded domain satisfying the uniform interior and exterior ball conditions with a maximal radius $\radi>0$, defined by 
\begin{align*}
\radi:=\sup  \bigg\lbrace r > 0 : \forall x \in \Gamma, \ \exists B(y,r) \subset\Omega, \ B(z,r) \subset \Omega^c,\ \text{such that}\ x \in \partial B(y,r) \cap \partial B(z,r)\bigg\rbrace,
\end{align*} 
where $B(x,r)$ denotes the open ball centered at $x$ with radius $r>0$.
For any $t\ge0$, we represent the free boundary
\begin{equation*}
\parOmega_t=\{x+\eta(x,t)\normal(x):x\in\Gamma\}
\end{equation*}
utilizing the height function 
\begin{equation*}
\eta(\cdot,t):\Gamma\to\mathbb{R},\ \text{provided that}\ \norm{\eta(\cdot,t)}_{L^\infty(\Gamma)}<\radi.
\end{equation*}  

\vspace{2mm}
\noindent\textbf{Initial data.} Let $\Omega_0$ be the initial fluid domain, and assume that the initial 
boundary can be represented as
\begin{equation*}
\parOmega_0=\{x+\eta_0(x)\normal(x):x\in\Gamma\},\ \text{where the height function satisfies}\ \norm{\eta_0}_{L^\infty(\Gamma)}<\radi.
\end{equation*}
Let $u_0, h_0\in H^3(\Omega_0)$ be divergence-free initial velocity and 
magnetic fields, respectively, satisfying 
\begin{equation*}
h_0\cdot\normal=0\ \text{on}\  \parOmega_0.
\end{equation*}

Given a solution $(u,h,\Omega_t)$ to system \eqref{eq_MHD} on a time interval $\left[0,T\right)$, we define the following geometric and analytic control quantities: 
\begin{align}
\assone_T&\coloneqq\radi-\sup_{t\in[0,T)}\norm{\eta(\cdot,t)}_{L^\infty(\Gamma)},\label{eq_ass1}\\
\asstwo_T &\coloneqq\sup_{t\in[0,T)}\Paren{\norm{\nabla u}_{L^{\infty}(\Omega_t)}+\norm{\nabla h}_{L^{\infty}(\Omega_t)}+\norm{\nabla^2h}_{L^{2}(\Omega_t)}+\norm{\eta(\cdot,t)}_{H^{\frac52}(\Gamma)}+\norm{u_n}_{H^{2}(\parOmega_t)}}. \label{eq_ass2}
\end{align}

Our first main result is as follows:
\begin{theorem}[The a priori estimates]\label{thm_main1}
Let $\Brace{(u,h,\Omega_t):0\le t<T}$ be a solution to system 
\eqref{eq_MHD} for some time $T>0$, and suppose that there exists a constant $\bd>0$ such that the following a priori assumptions hold:
\begin{equation}\label{eq_aprioriassumption}
\assone_T^{-1}+\asstwo_T<\bd.
\end{equation}
Then the following a priori estimates hold:
\begin{equation}\label{eq_mainresult1}
\sup_{t\in[0,T)}\Paren{\Energy(t)+\norm{\DT p}_{H^{1}(\Omega_t)}+\norm{\sff}_{H^2(\parOmega_t)}}\le  C(\bd)e^{C(\bd)(1+T)}\Energy(0),
\end{equation}
where $C(\bd)$ is a positive constant depending on $\bd$, and the initial energy $\Energy(0)$ depends only on the initial data
$\norm{u_0}_{H^3(\Omega_0)}, \norm{h_0}_{H^3(\Omega_0)}$, 
and $\norm{\mc}_{H^2(\parOmega_0)}$.

Moreover, there exist constants $C_0>0$ and $T_0>0$, depending on the same initial data, such that the 
a priori assumptions \eqref{eq_aprioriassumption} hold on $[0,T_0)$ with $\bd=C_0$.
\end{theorem}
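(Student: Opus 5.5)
The strategy follows the Shatah--Zeng framework for free-boundary Euler with surface tension, as refined in \cite{Hao2025a}, extended to the magnetic field. Since $\DT h = h\cdot\nabla u$ has the same structure as the momentum equation and $h\cdot\normal=0$ is preserved, we treat $(u,h)$ symmetrically where possible. The energy is split into three parts, each estimated using elliptic theory on the moving domain whose constants are controlled by $\assone_T^{-1}$ and $\|\eta\|_{H^{5/2}(\Gamma)}$:
\begin{itemize}
\item a ``curl part'', $\|\nabla\times u\|_{H^2}+\|\nabla\times h\|_{H^2}$;
\item a ``boundary part'', built on $\|\Bdnabla(\DT u\cdot\normal)\|_{L^2(\parOmega_t)}$ and $\|\DT^2 u\|_{L^2}$, which encodes $\mc\in H^2$;
\item the material-derivative terms.
\end{itemize}
Throughout we use the div--curl estimate
\[
\|v\|_{H^3}\lesssim \|\divergence v\|_{H^2}+\|\vorticity v\|_{H^2}+\|v\cdot\normal\|_{H^{5/2}(\parOmega_t)}+\|v\|_{L^2}.
\]
Boundary regularity $\|\sff\|_{H^{s}}$ is recovered from $\mc$ via the elliptic equation $\LB \normal+|\sff|^2\normal = \Bdnabla\mc$ on $\parOmega_t$.

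\textbf{Step 1: vorticity and current.} Apply $\vorticity$ to the momentum and induction equations to get transport equations for $\omega=\vorticity u$ and $j=\vorticity h$. Their right-hand sides are of the form $\nabla u\,\nabla\omega$, $\nabla h\,\nabla j$, $\nabla^2u\,\nabla h$, and so on. A standard $H^2$ energy estimate, with commutators bounded by $\|\nabla u\|_{L^\infty}+\|\nabla h\|_{L^\infty}$, gives
\[
\frac{d}{dt}\Bigl(\|\omega\|_{H^2}^2+\|j\|_{H^2}^2\Bigr)\le C(\bd)\,\Energy .
\]
The cross term $h\cdot\nabla j$ is handled by symmetrizing (Elsässer-type cancellation $\int h\cdot\nabla j\cdot\omega+h\cdot\nabla\omega\cdot j=0$, using $h_n=0$).

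\textbf{Step 2: boundary energy.} Differentiate $p=\mc$ along the flow and use the formula
\[
\DT \mc = -\LB u_n - |\sff|^2 u_n + \text{(tangential terms)} .
\]
Test $\DT^2 u+\nabla\DT p=\cdots$ against $\DT^2 u$ and integrate by parts. This gives an identity
\[
\frac{d}{dt}\Bigl(\|\DT^2u\|_{L^2}^2+\|\DT^2 h\|_{L^2}^2+\|\Bdnabla(\DT u\cdot\normal)\|_{L^2(\parOmega_t)}^2\Bigr)\le C(\bd)\,\Energy+\text{(error)},
\]
where the error terms need $\|\DT p\|_{H^1}$ and $\|\DT u\|_{H^{3/2}}$.

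Pressure is recovered from the elliptic problem
\[
-\lpe p = \operatorname{tr}(\nabla u)^2-\operatorname{tr}(\nabla h)^2 \ \text{ in } \Omega_t,\qquad p=\mc \ \text{ on } \parOmega_t .
\]
For $\DT p$ we differentiate this problem. The Lorentz contribution $\DT\operatorname{tr}(\nabla h)^2\sim \nabla h\,\nabla\DT h+\nabla h\,\nabla h\,\nabla u$ is where $\|\nabla^2h\|_{L^2}$ enters. Specifically, the commutator $[\DT,\lpe]p$ produces $\nabla^2 u\,\nabla^2p$, and $\nabla^2 p$ contains $h\cdot\nabla h$ terms whose $H^{-1}$-type bounds need $\nabla^2h\in L^2$. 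In this way $\|\DT p\|_{H^1}$ is controlled by $C(\bd)(\Energy^{1/2}+\|\mc\|_{H^{3/2}})$.

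The boundary value $\DT p|_{\parOmega_t}=\DT\mc$ is controlled in $H^{1/2}$ by $\|u_n\|_{H^2(\parOmega_t)}$ and $\|\sff\|_{H^{1/2}}$. From the boundary energy, together with the relation $\DT u\cdot\normal = -\nabla_{\normal}p+(h\cdot\nabla h)\cdot\normal$ and the Dirichlet--Neumann map, we recover $\|\mc\|_{H^2(\parOmega_t)}$, and hence $\|\sff\|_{H^2}$. Here $(h\cdot\nabla h)\cdot\normal=-\sff(h,h)$ because $h$ is tangent to $\parOmega_t$, which is the key use of $h_n=0$.

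\textbf{Step 3: closing the estimate.} Recover $\|\DT u\|_{H^{3/2}}$ and $\|\DT h\|_{H^{3/2}}$ from div--curl estimates, using $\divergence\DT u=-\operatorname{tr}(\nabla u)^2$, and $\|u\|_{H^3}$, $\|h\|_{H^3}$ from Step 1 plus boundary data. For $h$ the normal component on the boundary vanishes, and $u_n$ is handled via the boundary energy. Gronwall then gives \eqref{eq_mainresult1}.

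\textbf{Main obstacle.} The hardest step is the $\DT p$ estimate together with the $\DT^2$ boundary energy. The magnetic contributions appear at top order only in tangential form, and showing this requires repeatedly exploiting $h_n=0$ and $\DT$ preserving tangency so that no normal derivative of $h$ loses regularity.

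\textbf{Short-time statement.} The second part of the theorem follows by continuity. At $t=0$ the quantities in \eqref{eq_aprioriassumption} are bounded by the initial data via Sobolev embedding, $H^3\subset W^{1,\infty}$. They are continuous in time by the energy bound, so they stay below $C_0=2\times$ (initial size) on a short interval $[0,T_0)$ determined by $C(C_0)$.
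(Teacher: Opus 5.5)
Your overall architecture (a vorticity/current estimate using the Els\"asser-type cancellation, a Shatah--Zeng boundary energy built on $\norm{\Bdnabla(\DT u\cdot\normal)}_{L^2(\parOmega_t)}$, a reverse div--curl step, then Gronwall) matches the paper's Proposition~\ref{lem_ddt} and Proposition~\ref{pro_Elee}. As written, however, it does not close at the regularity of \eqref{eq_aprioriassumption}. The bound $\asstwo_T<\bd$ controls the boundary only through $\norm{\eta}_{H^{5/2}(\Gamma)}$, that is, $\sff,\mc\in H^{1/2}(\parOmega_t)$, and hence $\norm{\nabla p}_{L^2(\Omega_t)}\le C(\bd)$. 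It gives no pointwise-in-time control of $\norm{\mc}_{H^{3/2}(\parOmega_t)}$, $\norm{\nabla p}_{H^1(\Omega_t)}$ or $\norm{\DT p}_{H^1(\Omega_t)}$, and these are exactly what your Step~2 error terms require. The only pointwise bound available is $\norm{\nabla p}_{H^1}^2\lesssim\Energy$, from $\nabla p=-\DT u+h\cdot\nabla h$. Moreover, these pressure factors multiply top-order quantities. The boundary error $\Bdnabla p\cdot\DT u$ must be placed in $H^{1/2}(\parOmega_t)$ against $\DT^2u\cdot\normal\in H^{-1/2}$, which costs $\norm{\nabla p}_{L^6}\norm{\nabla\DT u}_{L^3}$. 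The term $|\sff|^2\,\DT u\cdot\normal$ costs $\norm{\Bdnabla\sff}_{L^4}$. And $[\DT^2,\nabla]p$ contains $\nabla\DT u\star\DT u$. What you actually obtain is \eqref{eq_pro1eq}, namely $\frac{d}{dt}\energy\le C(\bd)\Paren{1+\norm{\nabla p}_{H^1}^2}\Energy$. This is superlinear in $\Energy$; interpolating $\mc$ between $H^{1/2}$ and $H^2$ still leaves a power $\Energy^{5/3}$. Gronwall then yields only a short-time, Riccati-type bound, not \eqref{eq_mainresult1} on all of $[0,T)$ with the factor $e^{C(\bd)(1+T)}$.

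The missing idea is a time-integrated pressure estimate. This is where surface tension and the hypothesis on $\norm{u_n}_{H^2(\parOmega_t)}$ are really used, and it comes in two steps.
First, the paper differentiates $F(t)=\int_{\parOmega_t}p\,(\nabla u\,\normal\cdot\normal)\,dS+\vare\norm{p}_{L^2(\parOmega_t)}^2$. It substitutes $\nabla\DT u=-\nabla^2p+\nabla(h\cdot\nabla h)$, treats the Lorentz term with $h\cdot\normal=0$ and integration by parts, and then uses $p=\mc$ together with $\LB=\lpe-\nabla^2(\cdot)\normal\cdot\normal-\mc\,\partial_\normal$. This extracts the good term $-\norm{\Bdnabla p}_{L^2(\parOmega_t)}^2$, so $\int_0^T\norm{p}_{H^1(\parOmega_t)}^2dt\le C(\bd)(1+T)$ (Lemma~\ref{lem_pH1}).
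Second, it differentiates $J(t)=-\int_{\parOmega_t}p\,\LB u_n\,dS$, which is bounded precisely because $\norm{u_n}_{H^2(\parOmega_t)}$ is part of $\asstwo_T$. Reilly's identity turns $-\int_{\parOmega_t}\Bdnabla p\cdot\Bdnabla\partial_\normal p\,dS$ into $-\frac14\norm{\nabla^2p}_{L^2}^2$ plus controllable terms, giving $\int_0^T\norm{\nabla p}_{H^1}^2dt\le C(\bd)(1+T)$ (Proposition~\ref{lem_int0TnablapH1}).
This makes the Gronwall coefficient integrable in time, which is what produces the exponential factor.

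Two related points also need fixing. Your claim that $\DT p|_{\parOmega_t}$ is controlled in $H^{1/2}$ by $\norm{u_n}_{H^2}$ and $\norm{\sff}_{H^{1/2}}$ is off by half a derivative. From $\DT p=-\LB u_n-|\sff|^2u_n+\Bdnabla p\cdot u$, $u_n\in H^2$ only gives $\DT p\in L^2(\parOmega_t)$ modulo $\norm{p}_{H^1(\parOmega_t)}$, cf.\ \eqref{eq_DTpL2}, and that is how the assumption enters $F'(t)$. Also, the reverse inequality needs tame div--curl and elliptic estimates in which $\norm{\mc}_{H^2}$ enters linearly and is absorbed, with constants depending only on lower-order quantities as in Proposition~\ref{pro_Elee}. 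An $H^3$ div--curl estimate whose constant depends on the boundary only through $\norm{\eta}_{H^{5/2}}$ is not available.
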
  
Prior to the present work, a priori estimates and well-posedness results for incompressible free-boundary ideal MHD equations \eqref{eq_MHD} have predominantly relied on simplifying geometric or topological assumptions, such as spatial periodicity, simple connectedness, or graph representations of the free boundary. We establish, for the first time, a priori estimates for $H^3$ solutions with surface tension in general bounded fluid domains. This result substantially lowers the regularity threshold required in the literature, improving upon the stringent $H^6$ theory in \cite{Hao2025} for general domains and the $H^{\frac72}$ theory in \cite{Luo2021} under graph assumptions. It also provides the estimates needed for a possible $H^3$ local existence theory beyond the simply connected setting of \cite{Liu2023}. 
 
The asymmetric a priori assumptions in \eqref{eq_ass2} did not arise in the previous $H^6$ a priori estimates \cite{Hao2025}, where symmetric a priori assumptions were imposed on the interior velocity and magnetic fields. At the $H^3$ regularity level, these asymmetric assumptions reflect the intrinsic structural difference between momentum equation \eqref{eq_MHD1}, which contains the pressure term, and induction equation \eqref{eq_MHD2}. More precisely, the $H^{\frac{1}{2}}(\Omega_t)$ estimate for $\divergence \DT^{2}u$ in Proposition \ref{lem_ddt} requires control of $\nabla D_tu$ in $L^2(\Omega_t)$. By momentum equation \eqref{eq_MHD1}, this control involves the Lorentz-force term $\nabla(h\cdot\nabla h)$, whose $L^2$ estimate requires the additional a priori bound $\norm{\nabla^2h}_{L^2(\Omega_t)}$. 
By contrast, no analogous requirement arises from induction equation \eqref{eq_MHD2}. 

Our second main result addresses the finite-time blow-up criterion.
\begin{theorem}[Finite-time blow-up]\label{thm_main2}
Let $(u,h,\Omega_t)$ be a solution to the free-boundary problem 
\eqref{eq_MHD} with initial data $(u_0,h_0,\Omega_0)$, and let 
$[0,T_{\operatorname{max}})$ be its maximal interval of existence, satisfying
\begin{equation*}
(u,h)\in C\Paren{[0,T_{\operatorname{max}});H^3(\Omega_t)\times H^3(\Omega_t)}\ \text{and}\ \parOmega_t\in C\Paren{[0,T_{\operatorname{max}});H^4}.
\end{equation*}
If $T_{\operatorname{max}}<\infty$, for any sufficiently small $\varepsilon > 0$ independent 
of $T_{\operatorname{max}}$, at least one of the 
following scenarios must occur:
\begin{enumerate}[label={\rm (\arabic*)},topsep=5pt,itemsep=0pt] 
\item The free boundary $\parOmega_t$ self-intersects 
at $t=T_{\operatorname{max}}$.
\item Loss of boundary regularity:
\begin{equation*}
\limsup_{t \nearrow T_{\operatorname{max}}}\Paren{\norm{\mc}_{H^{\frac12}(\parOmega_t)} +\norm{\parOmega_t}_{H^{2+\varepsilon}}}=\infty.
\end{equation*}
\item Blow-up of the normal velocity field:
\begin{equation*}
\limsup_{t \nearrow T_{\operatorname{max}}}\norm{u_n}_{H^{2}(\parOmega_t)}=\infty.
\end{equation*} 
\item Interior blow-up:    
\begin{equation*} 
\limsup_{t \nearrow T_{\operatorname{max}}}\Paren{\norm{\nabla u}_{L^{\infty}(\Omega_t)}+\norm{\nabla h}_{L^{\infty}(\Omega_t)}+\norm{\nabla^2h}_{L^{2}(\Omega_t)}}=\infty.
\end{equation*}
\end{enumerate}
\end{theorem}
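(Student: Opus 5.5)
We argue by contradiction. Assume $T_{\operatorname{max}}<\infty$ and that none of (1)--(4) holds. Then the boundary does not self-intersect at $T_{\operatorname{max}}$, and there is a constant $M$ with
\begin{equation*}
\sup_{t\in[0,T_{\operatorname{max}})}\Paren{\norm{\mc}_{H^{\frac12}(\parOmega_t)}+\norm{\parOmega_t}_{H^{2+\varepsilon}}+\norm{u_n}_{H^2(\parOmega_t)}+\norm{\nabla u}_{L^\infty}+\norm{\nabla h}_{L^\infty}+\norm{\nabla^2h}_{L^2}}\le M.
\end{equation*}
The goal is to verify the hypotheses \eqref{eq_aprioriassumption} of Theorem \ref{thm_main1} on $[T_{\operatorname{max}}-\delta,T_{\operatorname{max}})$ for some fixed reference surface. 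Theorem \ref{thm_main1} then bounds $\Energy(t)$ and $\norm{\sff}_{H^2(\parOmega_t)}$ uniformly up to $T_{\operatorname{max}}$. With these bounds, the local existence theory (or, equivalently, the last assertion of Theorem \ref{thm_main1} applied from a time close to $T_{\operatorname{max}}$ with a lifespan depending only on the bounds) extends the solution beyond $T_{\operatorname{max}}$. This contradicts maximality.

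\textbf{Step 1: geometry and choice of reference surface.} Since $\norm{\nabla u}_{L^\infty}$ is bounded, $u$ is bounded on $[0,T_{\operatorname{max}})$: the $L^2$ norm is controlled by energy conservation and the volume is constant. So the boundary moves with bounded speed and $\parOmega_t$ converges in Hausdorff distance as $t\nearrow T_{\operatorname{max}}$. The bound in $H^{2+\varepsilon}\hookrightarrow C^{1,\alpha}$ gives uniform $C^{1,\alpha}$ local charts, and by Arzel\`a--Ascoli $\parOmega_t$ converges to a $C^{1,\alpha}$ immersed limit surface. Absence of self-intersection means this limit is embedded. An embedded compact $C^{1,\alpha}$ surface has a positive uniform ball radius at scale comparable to its $C^{1,\alpha}$ character.

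Following the altering-reference-surface method of \cite{Hao2025,Hao2025a}, we pick a smooth surface $\Gamma^*$ close to the limit surface, with ball radius $\radi^*$. For $t$ near $T_{\operatorname{max}}$, $\parOmega_t$ is then a normal graph $\eta$ over $\Gamma^*$ with $\norm{\eta}_{L^\infty}\le \radi^*/2$, which bounds $\assone^{-1}$. Using only finitely many reference surfaces on earlier compact time intervals is harmless, since the solution is regular there.

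\textbf{Step 2: from $H^{2+\varepsilon}$ and $\mc\in H^{1/2}$ to $\eta\in H^{5/2}$.} In graph coordinates the mean curvature is a quasilinear elliptic operator in $\eta$ whose coefficients are controlled in $C^{\alpha}$ by $\norm{\eta}_{H^{2+\varepsilon}}$. Elliptic regularity for $\mc(\eta)=\mc$ then gives $\norm{\eta}_{H^{5/2}}\le C(M)\Paren{1+\norm{\mc}_{H^{1/2}}}$. The lower-order commutator terms are absorbed using the $H^{2+\varepsilon}$ bound; this is exactly why the $\varepsilon$ gain is needed. Together with the assumed bounds on $u_n$, $\nabla u$, $\nabla h$, and $\nabla^2h$, this shows that $\asstwo$ is bounded, so \eqref{eq_aprioriassumption} holds with some $\bd=\bd(M)$.

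\textbf{Step 3: conclusion and main obstacle.} Apply Theorem \ref{thm_main1} from the time $T_{\operatorname{max}}-\delta$ at which the reference surface $\Gamma^*$ becomes admissible. This yields bounds on $\norm{u}_{H^3}$, $\norm{h}_{H^3}$, and $\norm{\sff}_{H^2}$, hence $\parOmega_t\in H^4$, uniformly on $[0,T_{\operatorname{max}})$. The solution can then be continued, a contradiction.

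The hardest part is Step 1. One must show that the uniform ball condition of the evolving surfaces degenerates only through self-intersection, and not through loss of embeddedness hidden in the limit. This requires quantifying the ball radius by $C^{1,\alpha}$ bounds together with the minimal distance between non-neighbouring boundary portions. I would first try a compactness argument on the sequence of surfaces, using the $C^{1,\alpha}$ bounds to obtain a uniform radius wherever no distinct sheets approach each other.
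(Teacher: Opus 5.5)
Your proposal follows the paper's proof: argue by contradiction, use the $H^{2+\varepsilon}$ and $\norm{\mc}_{H^{1/2}}$ bounds with Lemma \ref{lem_boudr} (your Step 2 is that elliptic-regularity statement) to get $\eta\in H^{5/2}$ over a reference surface attached to the limiting embedded boundary, so that \eqref{eq_aprioriassumption} holds on $[T_{\operatorname{max}}-\delta,T_{\operatorname{max}})$, then invoke Theorem \ref{thm_main1} and continue the solution. You use a smooth $\Gamma^*$ near the limit rather than $\parOmega_{T_{\operatorname{max}}}$ itself, as the paper does, and this is if anything the safer choice, since only the reference surface needs a positive ball radius; but your remark that an embedded $C^{1,\alpha}$ surface has a positive uniform ball radius is false for $\alpha<1$ (that needs $C^{1,1}$), so the real task in your ``hardest part'' is not bounding the ball radius of $\parOmega_t$ but only showing $\parOmega_t$ is a $C^1$-close normal graph over $\Gamma^*$, which your Step 1 already sets up.
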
  
Theorem \ref{thm_main2} establishes a comprehensive blow-up criterion for $H^3$ strong solutions, incorporating both boundary self-intersection and loss of boundary regularity. A distinctive feature is the asymmetry between the interior criteria for the velocity and magnetic fields: the velocity is controlled solely through $\norm{\nabla u}_{L^\infty(\Omega_t)}$, whereas the magnetic field requires control of both $\norm{\nabla h}_{L^\infty(\Omega_t)}$ and $\norm{\nabla^2h}_{L^2(\Omega_t)}$. This asymmetry reflects the pressure structure discussed above and identifies an additional magnetic-field quantity in the blow-up criterion that does not appear in the high-regularity framework \cite{Hao2025}. 
\begin{figure}[htbp] 
\centering
\includegraphics[width=14cm]{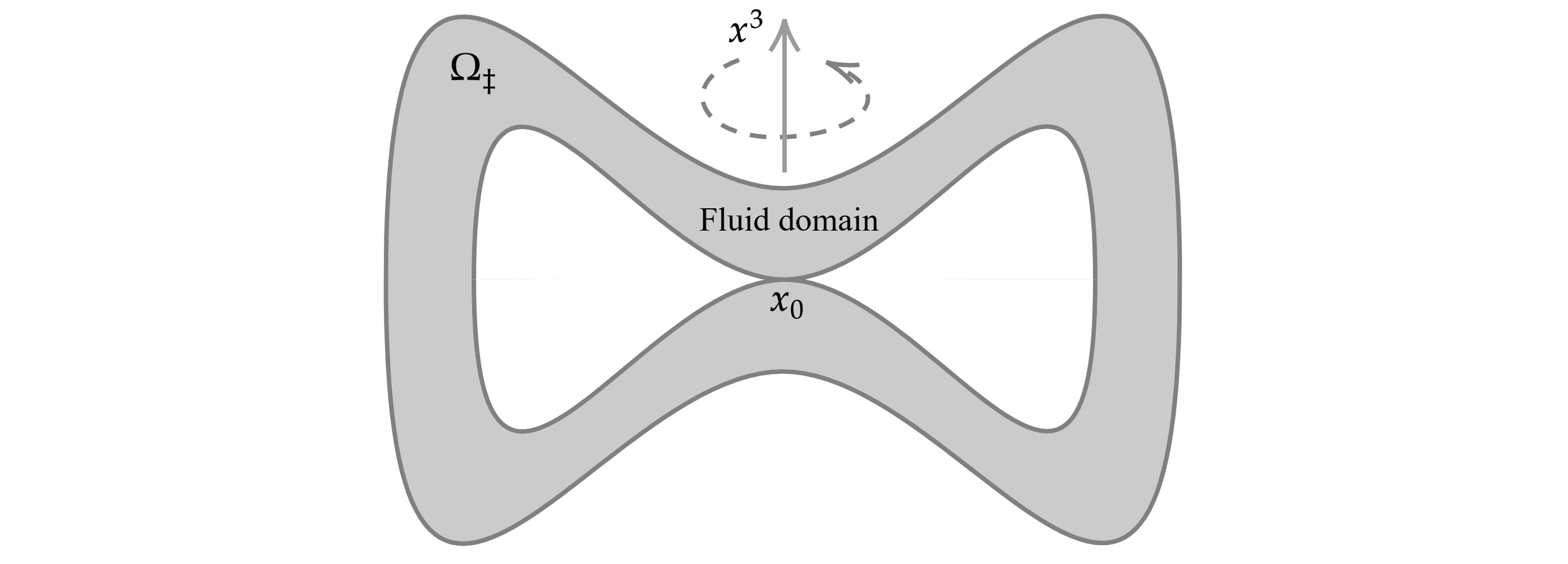} 
\caption{The three-dimensional axisymmetric self-intersecting domain 
generated by rotating a two-dimensional profile about the $x^3$-axis.}
\label{f6}
\end{figure}
\begin{figure}[htbp] 
\centering
\includegraphics[width=14cm]{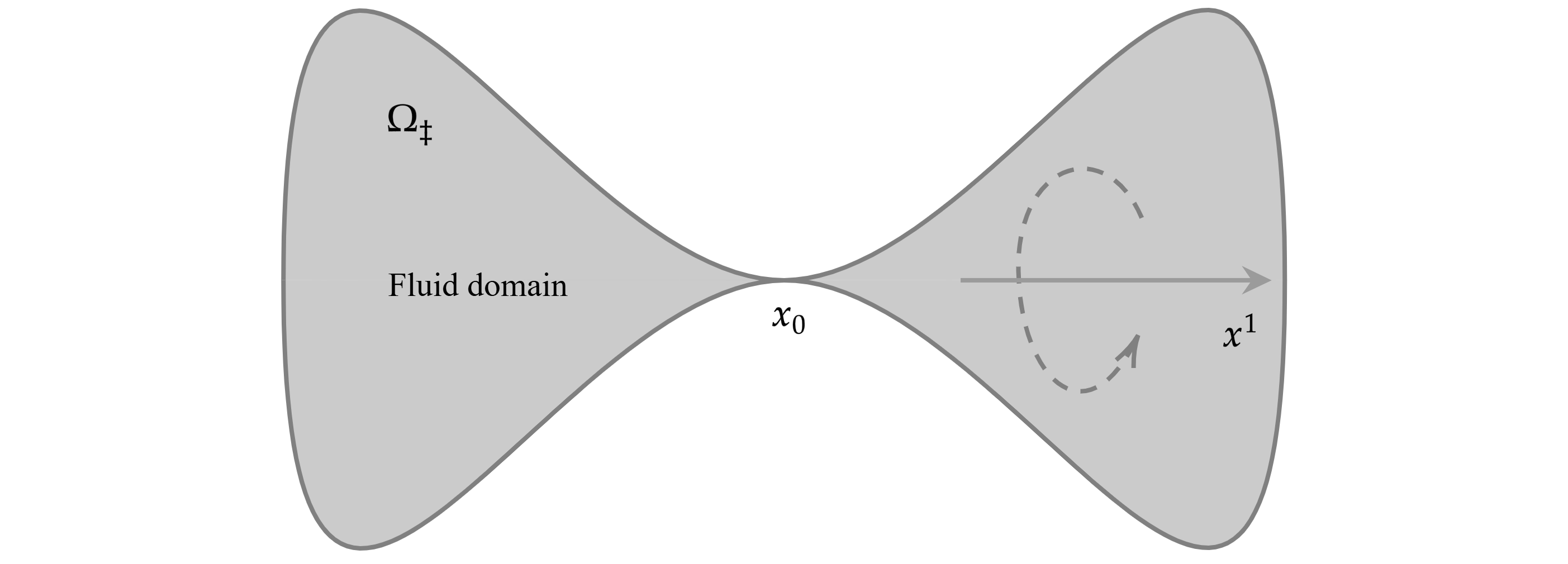} 
\caption{The three-dimensional axisymmetric self-intersecting domain 
generated by rotating a two-dimensional profile about the $x^1$-axis.}
\label{f1}
\end{figure} 

The boundary blow-up scenarios in Cases (1)--(3) describe distinct, though not necessarily mutually exclusive, mechanisms. The following examples illustrate how these scenarios may interact near the blow-up time. If the free boundary self-intersects as in Case (1) (see Fig.\,\ref{f6}), the boundary norms appearing in Cases (2) and (3) may remain uniformly bounded strictly before the contact time. As another example, suppose Case (2) occurs because the mean curvature loses its $H^{\frac12}$ regularity (see Fig.\,\ref{f1}). Then the $H^{\frac32}$ control of the normal vector is lost, and the corresponding $H^2$ control of the normal velocity $u_n$ is no longer available within the present framework. 

\vspace{2mm}
\noindent\textbf{Reduction to the blow-up criterion for Euler equations.}
When the magnetic field vanishes, problem \eqref{eq_MHD} reduces to the free-boundary Euler equations, and we obtain a finite-time blow-up criterion for the Euler system:
\begin{subnumcases}{} 
\text{The free boundary $\parOmega_{T_{\operatorname{max}}}$ self-intersects,}\label{BKME1}\\
\limsup\limits_{t \nearrow T_{\operatorname{max}}}\Brace{\norm{\mc}_{H^{\frac12}(\parOmega_t)} +\norm{\parOmega_t}_{H^{2+\varepsilon}}}=\infty,\label{BKME2}\\ 
\limsup\limits_{t \nearrow T_{\operatorname{max}}}\norm{u_n}_{H^{2}(\parOmega_t)}=\infty,\label{BKME3}\\
\limsup\limits_{t \nearrow T_{\operatorname{max}}}\norm{\nabla u}_{L^{\infty}(\Omega_t)}=\infty.\label{BKME4} 
\end{subnumcases}
Compared to the previous blow-up criterion (for free-boundary Euler equations) established under identical regularity and domain settings \cite[Theorem 1.1]{Hao2025a}:
\begin{subnumcases}{} 
\text{The free boundary $\parOmega_{T_{\operatorname{max}}}$ self-intersects,}\nonumber\\
\limsup\limits_{t \nearrow T_{\operatorname{max}}}\Brace{\norm{\mc}_{H^{\frac32}(\parOmega_t)} +\norm{\parOmega_t}_{H^{2+\varepsilon}}}=\infty,\nonumber\\ 
\limsup\limits_{t \nearrow T_{\operatorname{max}}}\norm{u_n}_{H^{\frac52}(\parOmega_t)}=\infty,\nonumber\\
\int_{0}^{T_{\operatorname{max}}}\norm{\nabla u}_{L^{\infty}(\Omega_t)}dt=\infty,\nonumber
\end{subnumcases}
we observe a delicate trade-off between the singular behavior on the free boundary and that in the fluid interior: \textit{the reduction in the boundary regularity requirements \eqref{BKME2} and \eqref{BKME3} is obtained at the expense of replacing the time-integrability condition on $\norm{\nabla u}_{L^\infty}$ by the stronger $L^\infty_t$ control in \eqref{BKME4}.} 

In light of this trade-off, the purpose of the present analysis is to identify weaker regularity conditions on the curvature and normal velocity that still yield a closed boundary blow-up criterion for free-boundary problem \eqref{eq_MHD}.

We now state our third main result, which constructs self-intersecting singular solutions corresponding to the first scenario in our blow-up criterion:
\begin{theorem}[Existence of the self-intersection singularity]\label{thm_main3}
The singularity scenario (1) predicted in Theorem \ref{thm_main2} is attainable. Specifically, for incompressible free-boundary ideal MHD equations \eqref{eq_MHD}, there exist suitable initial data $(u_0,h_0,\Omega_0)$ such that the corresponding solution has the regularity specified in Theorem \ref{thm_main2} and develops a finite-time topological self-intersection of the free boundary.
\end{theorem}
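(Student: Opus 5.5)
The plan is to construct the self-intersection by a time-reversal argument in the spirit of Castro et al.\ and Coutand--Shkoller, combined with the a priori estimates of Theorem \ref{thm_main1} and a stability-of-approximation argument. Fix a limiting self-intersecting configuration: an axisymmetric domain $\Omega^\ast$ (as in Fig.\,\ref{f6}) whose boundary is a smooth immersed surface that touches itself at a single point $x^\ast$ (or along a circle), with the two branches meeting tangentially and each branch locally a smooth graph with uniformly bounded curvature. On $\Omega^\ast$ prescribe smooth divergence-free data $(u^\ast,h^\ast)$ with $h^\ast\cdot\normal=0$ on each branch, with $h^\ast$ vanishing in a neighborhood of $x^\ast$ (for instance a toroidal field $h^\ast=f(r,x^3)e_\theta$ supported away from the contact region). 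Choose $u^\ast$ so that near $x^\ast$ the normal velocities of the two branches point strictly away from each other, e.g.\ $u^\ast\cdot\normal\ge c>0$ on both branches near $x^\ast$. Then solve the time-reversed system, under $(u,h,t)\mapsto(-u,h,-t)$, which maps \eqref{eq_MHD} to itself: $\DT$ changes sign, and so do the $u$-equation with $h\cdot\nabla h$ (after rescaling $h\mapsto -h$ if needed, since the equations are quadratic in $h$) and the induction equation. The backward evolution starting from the contact configuration separates the branches instantly, so for $t\in(0,\delta]$ the reversed solution lives on an embedded domain. Reversing again yields data at time $-\delta$ whose forward evolution reaches the contact at time $0$.

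To make this rigorous, I would avoid posing the problem on the non-embedded $\Omega^\ast$. Instead I work on a family of embedded approximating domains $\Omega^\epsilon$ in which the branches are separated by distance $\epsilon$, with data $(u^\epsilon,h^\epsilon)$ converging smoothly on each local chart to $(u^\ast,h^\ast)$. Because $h^\epsilon$ vanishes near the gap and is tangent elsewhere, the constraints $\divergence h=0$ and $h\cdot\normal=0$ hold uniformly. The key point is that all constants in Theorem \ref{thm_main1} depend only on $\bd$, that is on $\assone_T^{-1}$ and $\asstwo_T$. Here the reference surface $\Gamma$ should be chosen as the boundary of a fixed smooth embedded domain obtained by opening a gap of size independent of $\epsilon$. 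This is the method of altering reference surfaces: the height function $\eta$ only needs to be small relative to $\radi$ locally near each branch, so a reference surface with a fixed gap keeps $\radi$ bounded below. Since $\Energy(0)$ depends only on $\norm{u_0}_{H^3},\norm{h_0}_{H^3},\norm{\mc}_{H^2}$, which are uniform in $\epsilon$, the second part of Theorem \ref{thm_main1} gives a uniform existence time $T_0$ and uniform $H^3$ bounds for the reversed solutions. I would also assume the local existence in $H^3$ implicit in Theorem \ref{thm_main2}, or pass to higher regularity data to apply existing well-posedness results.

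With uniform bounds, $u_n$ is Lipschitz in time along the flow. For the reversed solutions starting at gap $\epsilon$, the gap grows at rate at least $c/2$ on $[0,\min(T_0,\tau)]$ with $\tau$ independent of $\epsilon$. Passing $\epsilon\to0$ by compactness gives a reversed solution from the contact configuration with gap $\ge ct/2$ for $t>0$. Fixing $t_1\in(0,\tau)$ and reversing then gives embedded initial data $(u_0,h_0,\Omega_0)$ at time $-t_1$, with $H^3$ velocity and magnetic field and $H^4$ boundary. Their forward solution coincides with the reversed limit by uniqueness, or at least is one such solution, and it self-intersects at time $0$ with $\norm{\sff}_{H^2}$ and all Sobolev norms bounded up to contact.

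The main obstacle will be the uniformity of the a priori estimates as $\epsilon\to0$ near the nearly touching branches. The elliptic pressure problem with $p=\mc$ and the Lorentz-force source lives on a domain with a thin exterior gap, and the constants must not degenerate. Since the gap lies outside $\Omega^\epsilon$, the interior ball condition and local charts remain uniform. I would verify this by checking that every elliptic and trace estimate used in Theorem \ref{thm_main1} is local, with constants depending on $\assone_T$ relative to the altered reference surface rather than on the global embedding. I would also check that the limiting time-reversed solution is regular enough to be reversed legitimately, which is why I insist that $h^\ast$ vanish near contact so that no magnetic boundary constraint interacts across the branches.
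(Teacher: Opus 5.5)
Your proposal has a genuine gap in the step the whole construction depends on: an existence time that is uniform in $\epsilon$. You obtain it from the second part of Theorem \ref{thm_main1}, after choosing a reference surface $\Gamma$ with a fixed gap $g$ so that $\radi$ stays bounded below. But the hypothesis of Theorem \ref{thm_main1} controls $\assone_T^{-1}$, where $\assone_T=\radi-\sup_t\norm{\eta(\cdot,t)}_{L^\infty(\Gamma)}$ and $\radi$ is a global interior/exterior ball radius. Near the contact, $\radi\le g/2$. Writing $\parOmega^\epsilon$ (gap $\epsilon$) as $\{x+\eta\normal(x)\}$ forces the two displacements near the contact to add up to about $g-\epsilon$. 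Hence $\sup|\eta|\gtrsim(g-\epsilon)/2$ and $\assone_0^\epsilon\lesssim\epsilon$ for every admissible $\Gamma$. The quantity $\assone$ is built precisely to measure proximity to self-intersection, and altering the reference surface cannot remove this. Accordingly, in Step 2 of the proof of Theorem \ref{thm_main1}, suppose the separation condition $\assone_t\ge\assone_0/2$ in \eqref{eq_time_T0} is the one that saturates. Then the only lower bound is $T_0\ge\assone_0/(2C_{\operatorname{initial}})$, which tends to $0$ with $\epsilon$. This dependence is unavoidable, since the conclusion requires $\assone_T^{-1}<C_0$. Replacing $\assone$ by a local smallness condition on $\eta$ is not what Theorem \ref{thm_main1} assumes, and it discards exactly the information that keeps the two branches from colliding. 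So the uniform $T_0$ is not established, and neither is anything built on it: the uniform bounds, the gap growth on $[0,\min(T_0,\tau)]$, and the compactness step.

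Your observation that the gap opens is the right remedy, but it must be used inside the continuity argument, not after it. This is what Proposition \ref{pro_5.1} does. It defines $T_0^\pr$ by the three bootstrap conditions in \eqref{e:time_T0}. The conditions on $\Lambda^\pr$ and $\energy^\pr$ cannot saturate before a uniform time, because the trace, elliptic and Kato--Ponce constants are chartwise and hence independent of $\pr$ (Section \ref{sec_uniform}). The separation condition cannot saturate before a $\pr$-independent turning time: on the two approaching branches the normal velocity keeps its separating sign, so $\assone^\pr$ does not decrease.

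For that sign control, the claim \emph{$u_n$ is Lipschitz in time} is not available at the $H^3$ level. The boundary acceleration contains $\partial_\normal p$, and $p\in H^{\frac52}$ only gives $\nabla p|_{\parOmega_t}\in H^1(\parOmega_t)\not\subset L^\infty$. The paper instead proves $\norm{v^\pr_n(x+\eta^\pr\normal^\pr(x,0),t)-v^\pr_n(x,0)}_{L^\infty_x}\le Ct^\alpha$. It interpolates an $O(t)$ bound in $H^1(\parXi^\pr_0)$, coming from $\partial_tv^\pr=\DT v^\pr-v^\pr\cdot\nabla v^\pr$ with $\DT v^\pr\in H^{\frac32}$, against an $O(1)$ bound in $H^2$. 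H\"older continuity suffices to give the turning time $(C_\pm/C)^{1/\alpha}$.

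The other choices in your plan are compatible with the paper. The map $(u,h,t)\mapsto(-u,h,-t)$ is a valid symmetry; the paper uses $(x,t)\mapsto(-x,-t)$. A toroidal $h^\ast$ vanishing near the contact is a legitimate special choice that trivializes the magnetic constraints on $\Omega^\epsilon$, whereas the paper handles general tangential fields with a Piola transform. The sign condition you need is $u^\ast\cdot\normal\ge c>0$ with $\normal$ outward, i.e.\ the branches approach each other in forward time, so your phrase "point strictly away from each other" should be read that way.
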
 
One major obstruction in Theorem \ref{thm_main3} is the presence of surface tension.  Unlike self-intersection constructions without surface tension \cite{Castro2013,Coutand2014}, the pressure is prescribed by the mean curvature on the moving boundary. This condition provides a geometric restoring force, but at the limiting self-intersection point it no longer has a classical meaning: the boundary is not an embedded hypersurface and there is no single-valued normal vector or mean curvature. The construction therefore cannot be carried out by imposing $p=\mc$ directly on the singular boundary. Instead, the surface-tension condition is imposed on smooth approximating domains, and the curvature, pressure, and energy estimates are proved uniformly as the distance between the two boundary branches tends to zero. Theorem \ref{thm_main3} shows, in particular, that surface tension does not preclude finite-time self-intersection, while the relevant curvature bounds remain uniform up to the contact time. 

The local contact geometry is analogous in spirit to splash constructions for Euler flows, but the MHD problem is not obtained by simply carrying over the Euler argument. The magnetic field must remain divergence-free in the fluid, tangent to the moving boundary, and uniformly controlled in $H^3$ on the approximating domains. At the same time, its Lorentz-force contribution must be incorporated into the pressure and energy estimates. As explained in the next subsection, this control is compatible with the prescribed inward boundary motion up to the contact time. The novelty in the MHD setting is precisely to prove that the frozen-in magnetic field and the Lorentz force remain compatible with the self-intersection construction throughout the evolution. When the magnetic field vanishes, the construction reduces to the corresponding construction for the free-boundary Euler equations and yields the self-intersecting solutions conjectured in \cite{Hao2025a}.

\subsection{Strategy for constructing self-intersecting singular solutions}\label{sec_strategy}
The proof of Theorem \ref{thm_main3} relies on a backward-in-time construction via a sequence of regular approximating domains. This approximation is essential because the surface-tension condition $p=\mc$ is imposed on embedded free surfaces and cannot be used directly at the limiting self-intersection point. The main difficulty is to obtain a common existence interval for the approximate solutions, despite the absence of a uniform positive lower bound on the separation between the two approaching boundary branches, which approach self-intersection as the parameter tends to zero. Such a uniform interval is essential for the subsequent compactness argument. 

The proof is structurally divided into four steps.
\begin{figure}[htbp] 
\centering
\includegraphics[width=14cm]{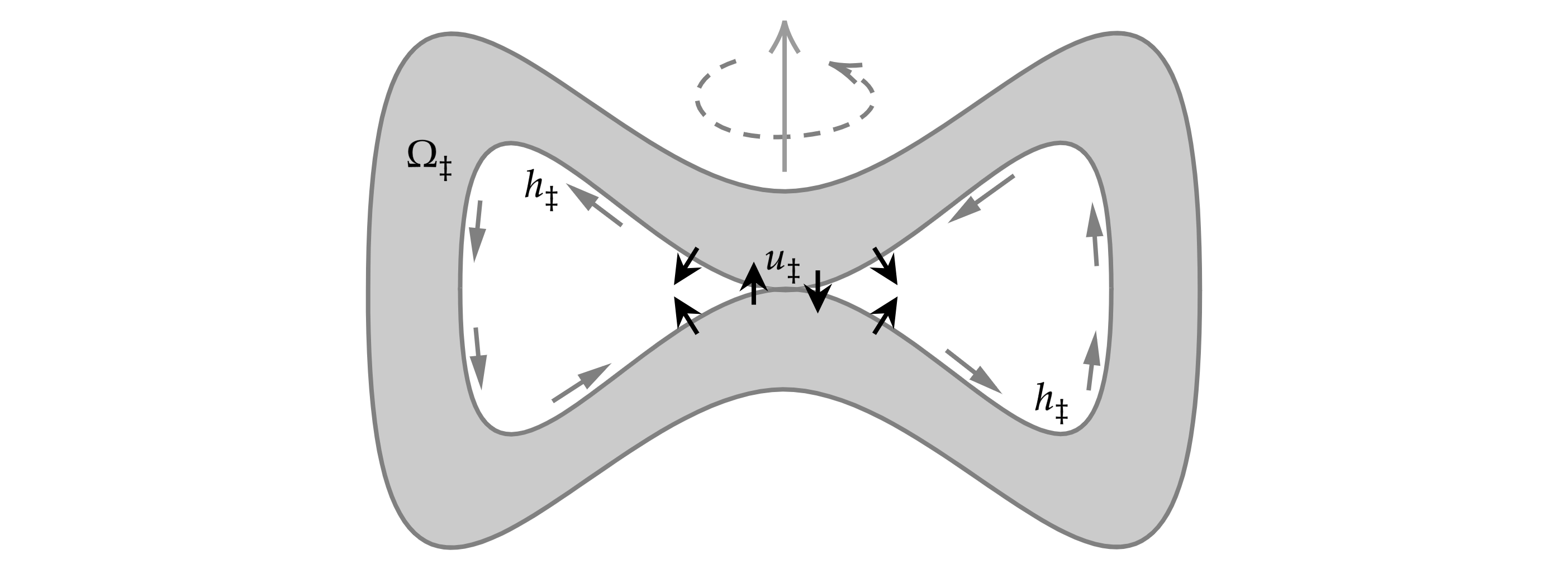} 
\caption{The self-intersecting domain.}
\label{f6i}
\end{figure}
\begin{figure}[htbp] 
\centering
\includegraphics[width=14cm]{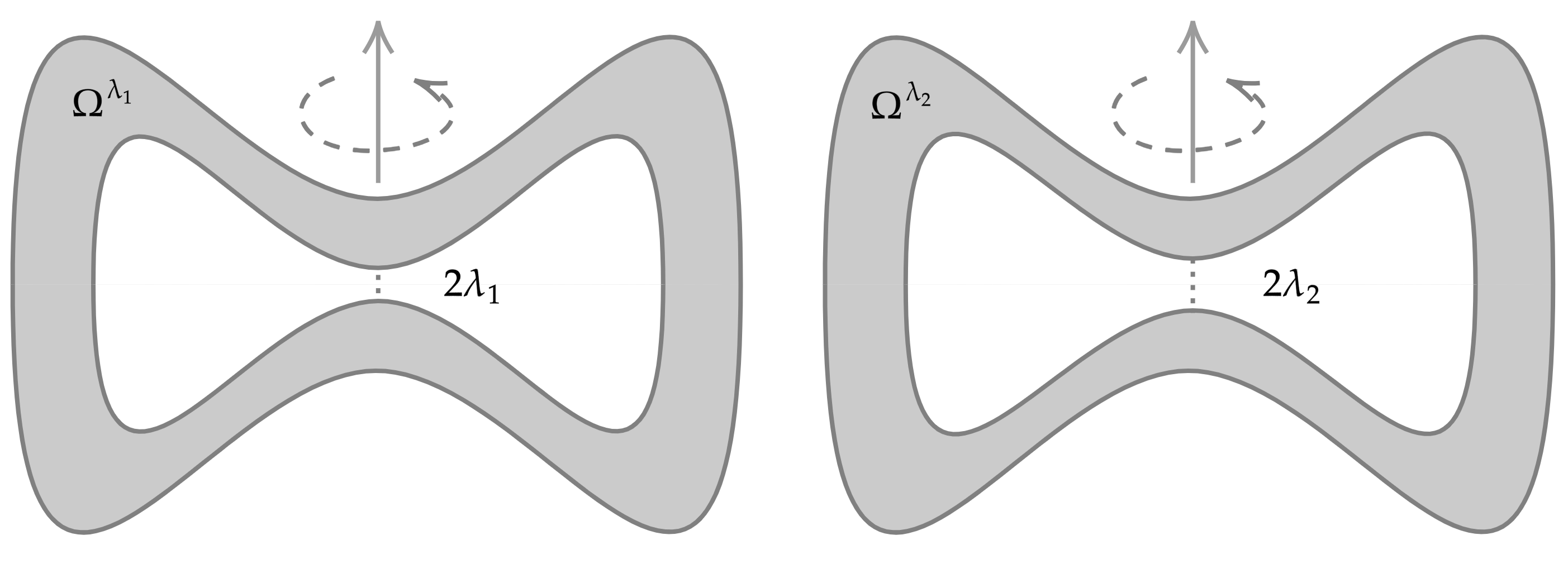} 
\caption{The approximate domains $\Omega^{\pr_1}$ and  $\Omega^{\pr_2}$ with $0<\pr_1<\pr_2$.}
\label{f8}
\end{figure}

\vspace{2mm}
\noindent\textbf{Step 1. Construction of approximate domains.} We first define the self-intersecting domain $\Omega_{\ddagger}$  (see Fig.\,\ref{f6i}). To approximate this singular domain, we introduce a family of regular, non-self-intersecting domains $\Omega^\pr$ ($\pr> 0$) by separating the two boundary branches near the contact point (see Fig.\,\ref{f8}). Through local coordinate decomposition techniques, we establish Sobolev and geometric estimates on $\Omega^\pr$ that are uniform in $\pr$.

\vspace{2mm}
\noindent\textbf{Step 2. Configuration of singular states.} On the singular domain $\Omega_{\ddagger}$, we prescribe a velocity field featuring strictly inward normal components at the contact point, alongside a compatible tangential magnetic field (see Fig.\,\ref{f6i} and Fig.\,\ref{f10}). Using local coordinates, these singular fields define a sequence of regular target states $(u_{\ddagger}^\pr, h_{\ddagger}^\pr)$ on the approximating domains $\Omega^\pr$ at the final time $t=0$ (see Fig.\,\ref{f9}).

\begin{figure}[htbp] 
\centering
\includegraphics[width=14cm]{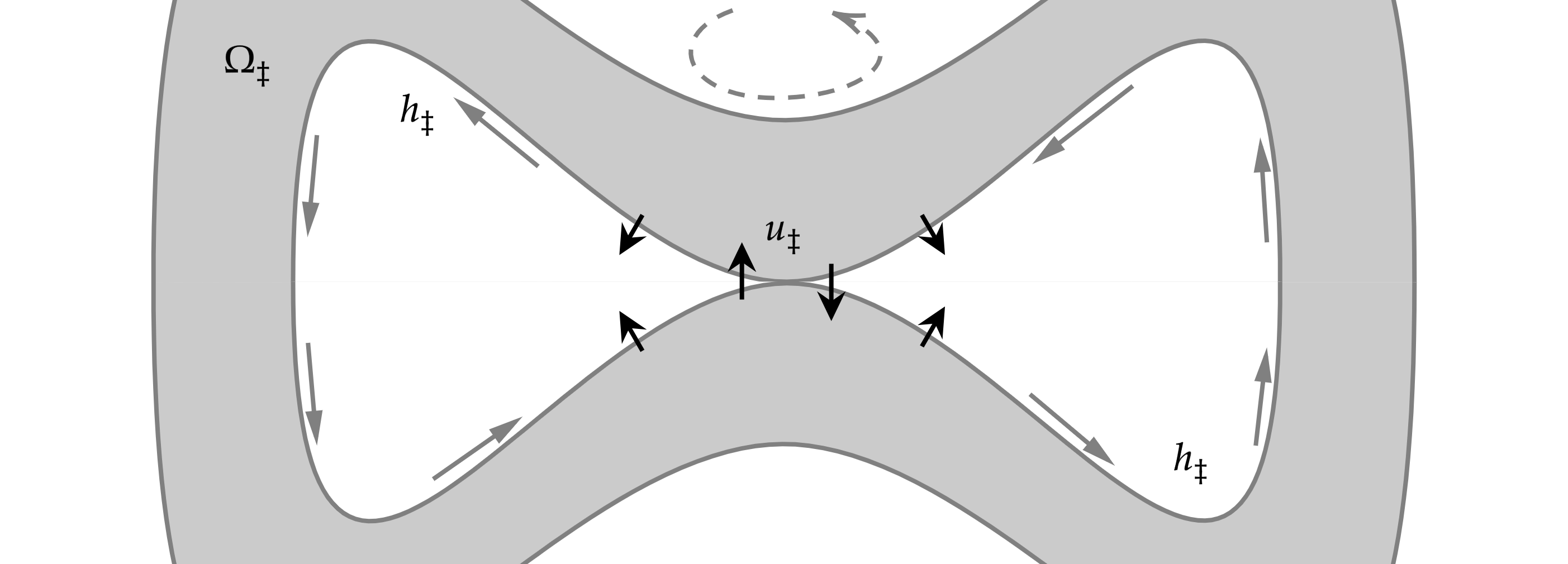} 
\caption{The singular velocity and magnetic fields on  $\parOmega_\ddagger$.}
\label{f10}
\end{figure}
\begin{figure}[htbp] 
\centering
\includegraphics[width=14cm]{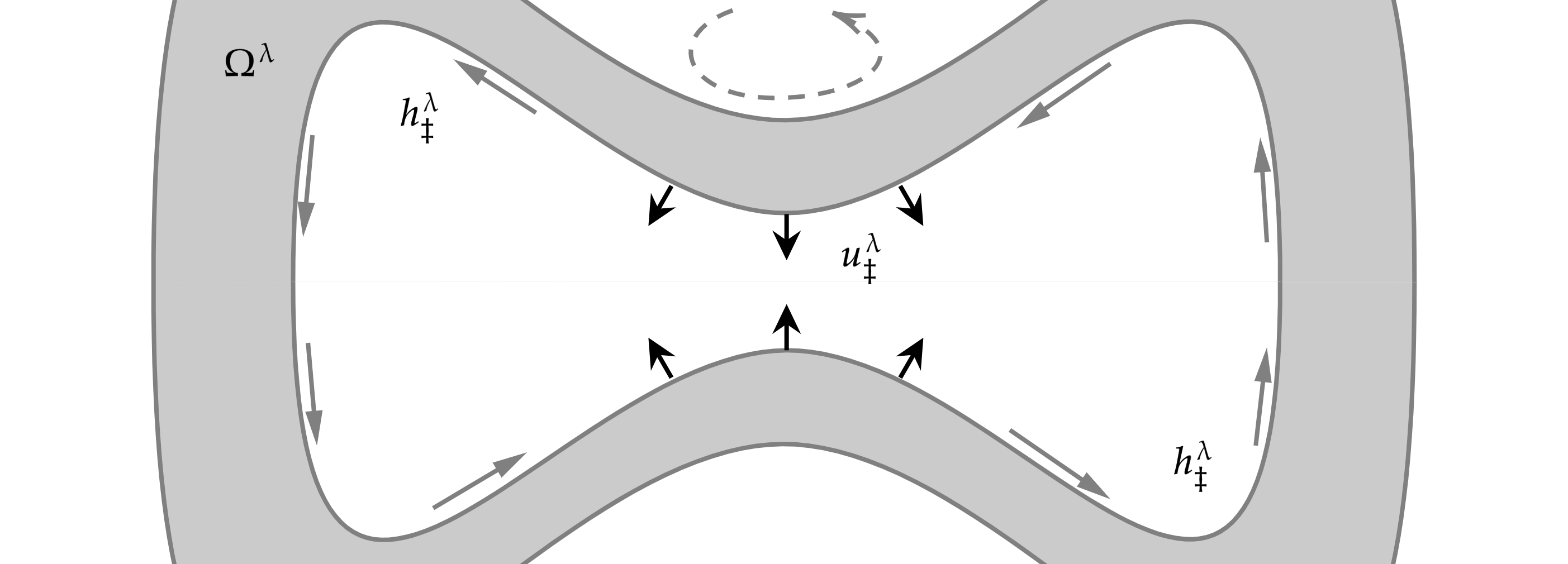} 
\caption{The approximate velocity and magnetic fields on  $\parOmega^{\pr}$.}
\label{f9}
\end{figure}  
\begin{figure}[htbp] 
\centering
\includegraphics[width=14cm]{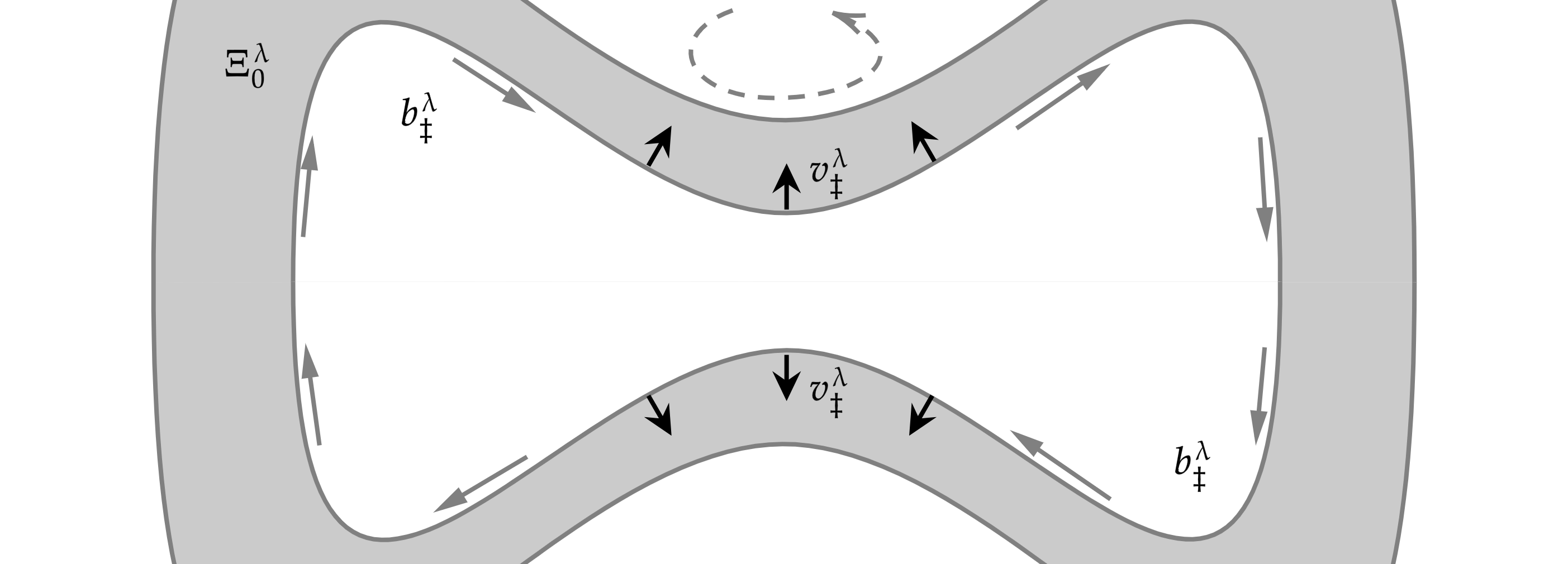} 
\caption{The backward-in-time approximate velocity and magnetic fields on $\parXi^{\pr}_0$.}
\label{f11}
\end{figure}
\begin{remark}
The tangential magnetic boundary condition remains compatible with the limiting geometry. It is imposed separately on the two approaching branches before contact. At the self-intersection point, their one-sided normals are collinear, and hence their tangent planes coincide. The corresponding magnetic traces need only be tangent to this common plane, with no condition imposed on their relative orientation.
\end{remark}
\vspace{2mm}
\noindent\textbf{Step 3. Uniform backward-in-time well-posedness.} We solve the free-boundary MHD equations backward in time on $\Omega^\pr$ from the prescribed terminal states. After the space-time reversal
\begin{equation*}
\Paren{v^\pr,b^\pr,q^\pr}(x,t)=\Paren{u^\pr,h^\pr,p^\pr}(-x,-t),\ \text{and}\ \Xi_t^\pr=\Brace{(-x^1,-x^2,-x^3):(x^1,x^2,x^3)\in \Omega^\pr_{t} },
\end{equation*} 
this is equivalent to solving the system forward in time from the initial domain $\Xi_0^\pr=-\Omega^\pr$ with initial velocity and magnetic fields $(v_\ddagger^\pr,b_\ddagger^\pr)(x)=(u_\ddagger^\pr,h_\ddagger^\pr)(-x)$ (see Fig.\,\ref{f11}).
Since $\Omega^\pr$, and hence $\Xi_0^\pr$, is simply connected, the local existence result  \cite{Liu2023} yields an $H^3$ solution on a time interval $[0,T^\pr]$ for each $\pr>0$.

To extract a convergent subsequence, we establish a uniform existence time $T_\star > 0$ independent of $\pr$, such that $v^\pr,b^\pr\in C^0 (0,T_\star;H^3(\Xi_t^\pr))$ and $ \parXi_t^\pr\in C^0 (0,T_\star;H^4)$. To this end, we need to guarantee that the inward boundary velocity does not reverse direction prematurely. A natural strategy would be to bound the $L^\infty$-norm of the boundary acceleration, which would immediately yield a lower bound for the time before the prescribed normal velocity can change sign. However, working within an $H^3$-regularity framework, we can only control the $H^{\frac{3}{2}}$-norm of the material derivative $\DT u$ in the fluid interior. This is insufficient to obtain an $L^\infty$ estimate for the acceleration on the free boundary. 
\begin{figure}[htbp]
\centering 
\begin{subfigure}{\textwidth}
\centering
\includegraphics[width=14cm]{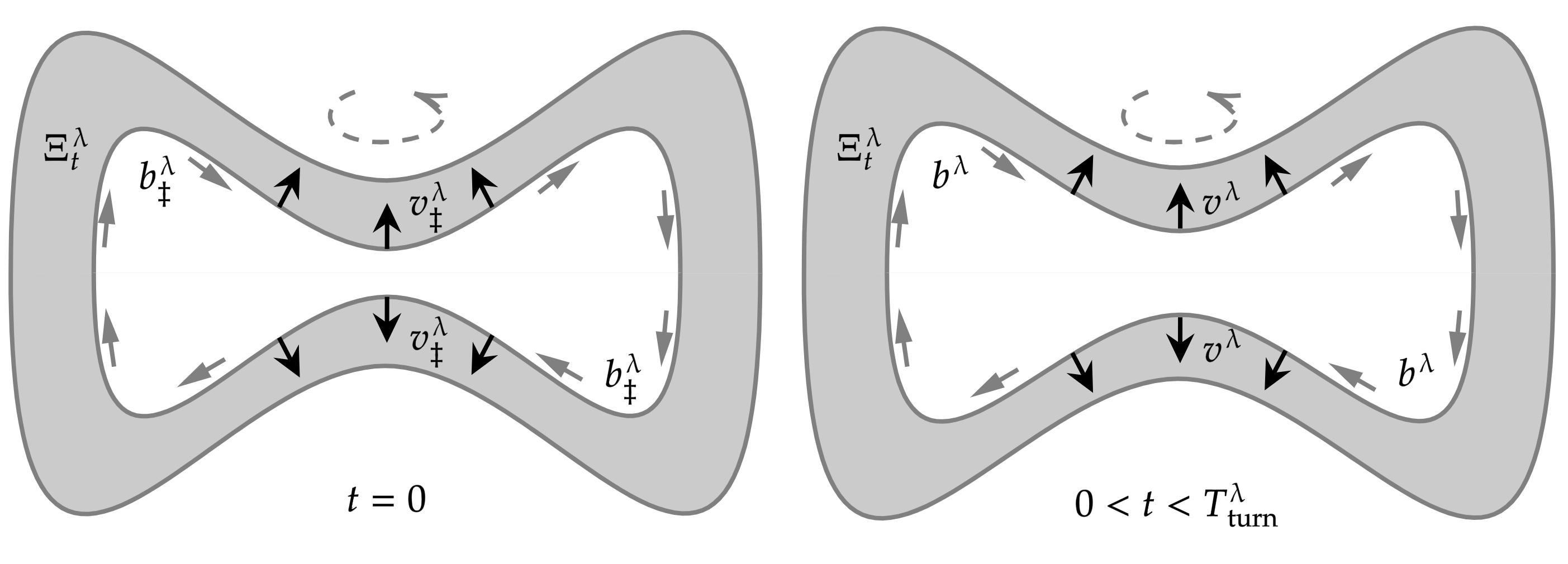} 
\end{subfigure} 
\vspace{2em}  
\begin{subfigure}{\textwidth}
\centering 
\includegraphics[width=14cm]{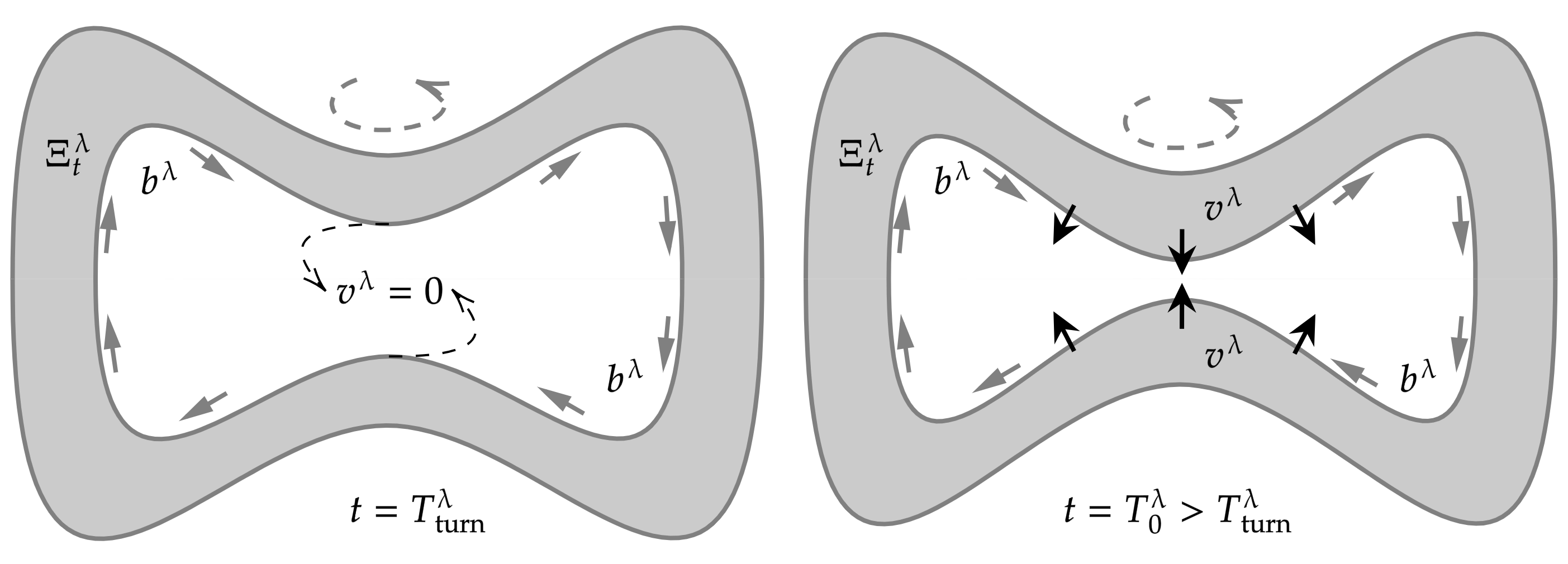}  
\end{subfigure}
\caption{The uniform lower bound for the turning time.}
\label{f15}
\end{figure}

To circumvent this difficulty, we establish a H\"{o}lder-type estimate for the temporal variation of the boundary velocity (cf. Proposition \ref{pro_5.1}), i.e.,
\begin{equation*}
\norm{v_\normal^\pr\Paren{x+\eta^\pr(x,t)\normal^\pr(x,0),t}-v_\normal^\pr(x,0)}_{L^\infty_x(\parXi^\pr_0)}\le Ct^\alpha, 
\end{equation*}
where $\eta^\pr$ denotes the height function and $\alpha\in (0,1)$.
This estimate prevents the prescribed inward normal velocity from changing sign before a time independent of $\pr$, and yields a uniform lower bound for the turning time $T^\pr_{\text{turn}}$ (see Fig.\,\ref{f15}, where $T_0^\pr$ is defined in \eqref{e:time_T0}):
\begin{equation*}
T^\pr_{\text{turn}}\ge \Paren{C_\pm/C}^{\frac{1}{\alpha}}>0,
\end{equation*}
where $C_\pm$ denotes a lower bound for the normal component of the initial velocity $v_\ddagger^\pr$. This ensures a common time interval on which the energy functionals remain uniformly bounded.

\begin{remark}
For a general family of fluid domains and initial data, obtaining a uniform existence time is difficult when the initial boundary separation has no uniform positive lower bound, since unconstrained boundary velocities may drive nearly touching boundary portions into immediate contact. In our construction, a uniform directional constraint is imposed on the approximate velocity fields, yielding a positive lower bound for the existence time without requiring uniform initial boundary separation. 
\end{remark}

\vspace{2mm}
\noindent\textbf{Step 4. Compactness analysis.} At the uniform initial time $t=-T_\star$, the approximate boundaries remain regular and non-self-intersecting, with geometric and $H^3$ bounds uniform in $\pr$. These estimates allow us to apply the weak compactness argument and extract a subsequence converging to regular initial data  $\Paren{u_{-T_\star}, h_{-T_\star}, \Omega_{-T_\star}}$. Evolving these data forward under the ideal MHD equations yields a solution whose free boundary self-intersects at $t=0$.

\begin{remark}
Although the construction is presented in an axisymmetric setting, its local structure suggests possible extensions to more general geometries and contact configurations. 
\end{remark}
 
\vspace{2mm}
\noindent\textbf{Structure of the paper.}
The remainder of this paper is organized as follows. In Section \ref{sec2}, we compute the time evolution of the energy functionals. Section \ref{sec3} establishes the a priori estimates and the blow-up theory (Theorems \ref{thm_main1} and \ref{thm_main2}) via uniform pressure and reverse energy estimates. In Section \ref{sec4}, we introduce the geometric and analytical setup for the self-intersection singularity by constructing a sequence of regular approximating fluid domains. Finally, Section \ref{sec5} proves the existence of this singularity (Theorem \ref{thm_main3}) using uniform backward-in-time estimates.

\section{Time evolution of the auxiliary energy functional}\label{sec2}
Throughout this section and the sequel, we adopt the Einstein summation convention and utilize the notation $S\star T$ (following \cite{Hamilton1982,Mantegazza2002}) to denote a generic linear combination of contractions between tensors $S$ and $T$ with constant coefficients. More specifically, for integers $k,l\ge 0$ and arbitrary tensor fields $f$ and $g$, the notation $\nabla^k f\star\nabla^l g$ represents a generic contraction involving derivatives of $f$ up to order $k$ and derivatives of $g$ up to order $l$. By convention, this notation encompasses lower-order derivatives (including the fields themselves) but strictly excludes isolated linear terms of the form $\nabla^i f$.

In this section, we estimate the time derivative of the auxiliary energy functional
\begin{align}
\energy(t)\coloneqq\frac12&\left(\norm{\DT^2u}_{L^2(\Omega_t)}^2+\norm{\DT^2h}_{L^2(\Omega_t)}^2+\norm{\Bdnabla\Paren{\DT u\cdot\normal}}_{L^2(\parOmega_t)}^2\right.\nonumber\\
&\quad\left.+\norm{\nabla^2\Paren{\vorticity u}}_{L^2(\Omega_t)}^2+\norm{\nabla^2\Paren{\vorticity h}}_{L^2(\Omega_t)}^2\right),\label{eq_energy} 
\end{align}
in terms of the full energy functional $\Energy(t)$. The complementary reverse estimate, namely
\begin{equation*}
\Energy(t)\le C(\bd)\Paren{1+\energy(t)},
\end{equation*} 
will be established in Section \ref{sec_closetheestimate}. The main result of this section is given in the following proposition.
\begin{proposition}\label{lem_ddt} 
Assume that the a priori assumptions \eqref{eq_aprioriassumption} hold 
for some time $T>0$. Then we have
\begin{equation}\label{eq_pro1eq}
\module{\frac{d}{dt}\energy(t)}\le C(\bd)\Paren{1+\norm{\nabla  p}_{H^1(\Omega_t)}^2}\Energy(t),\quad t\le T. 
\end{equation}
\end{proposition}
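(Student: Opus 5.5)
Differentiate each piece of $\energy(t)$ with the transport (Reynolds) formula $\frac{d}{dt}\int_{\Omega_t}f=\int_{\Omega_t}\DT f$, valid since $\divergence u=0$. On the boundary use $\frac{d}{dt}\int_{\parOmega_t}g=\int_{\parOmega_t}\DT g+g\,\Bddiv u$ together with the standard evolution identities
\begin{equation*}
\DT\normal=-(\Bdnabla u)^{\top}\normal,\qquad \DT\mc=-\LB u_n-\module{\sff}^2u_n+\text{(tangential transport terms)}.
\end{equation*}
The commutator $[\DT,\nabla]=-\nabla u\star\nabla$ is used throughout.

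\textbf{Vorticity part.} Applying $\vorticity$ to \eqref{eq_MHD1}--\eqref{eq_MHD2} gives
\begin{equation*}
\DT(\vorticity u)=h\cdot\nabla(\vorticity h)+\nabla u\star\nabla u+\nabla h\star\nabla h,\qquad \DT(\vorticity h)=h\cdot\nabla(\vorticity u)+\nabla u\star\nabla h.
\end{equation*}
Apply $\nabla^2$ and take the $L^2$ inner product. The two top-order terms $h\cdot\nabla\nabla^2(\vorticity h)$ and $h\cdot\nabla\nabla^2(\vorticity u)$ cancel after integrating by parts. This uses $\divergence h=0$ and $h\cdot\normal=0$, so no boundary term appears. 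The remaining terms are products with at most three derivatives on one factor. They are bounded by $\norm{\nabla u}_{L^\infty}+\norm{\nabla h}_{L^\infty}$ times $\norm{u}_{H^3}^2+\norm{h}_{H^3}^2$. Terms of the form $\nabla^2 u\star\nabla^2 h$ are handled by Gagliardo--Nirenberg/Kato--Ponce, giving $C(\bd)\Energy$.

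\textbf{Material-derivative part.} Set $a=\DT^2u$ and $b=\DT^2h$. Apply $\DT^2$ to the equations to get
\begin{equation*}
\DT a+\nabla\DT^2p=h\cdot\nabla b+R_1,\qquad \DT b=h\cdot\nabla a+R_2.
\end{equation*}
The remainders $R_i$ are sums of terms like $\nabla\DT u\star\nabla\DT u$, $\nabla u\star\nabla^2\DT p$, and $\nabla\DT h\star\DT h$. In $\frac{d}{dt}\frac12(\norm{a}^2+\norm{b}^2)$, the cross terms $\int a\cdot(h\cdot\nabla b)+b\cdot(h\cdot\nabla a)$ vanish by the same integration by parts. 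The pressure term gives
\begin{equation*}
-\int_{\Omega_t}a\cdot\nabla\DT^2p=\int_{\Omega_t}(\divergence a)\,\DT^2p-\int_{\parOmega_t}(a\cdot\normal)\,\DT^2p.
\end{equation*}
The interior piece needs $\divergence\DT^2u$ in $H^{1/2}$. Expanding $\divergence\DT^2u=\nabla u\star\nabla\DT u+\nabla u\star\nabla u\star\nabla u$ and substituting $\DT u=-\nabla p+h\cdot\nabla h$ shows where $\norm{\nabla^2h}_{L^2}$ and $\norm{\nabla p}_{H^1}$ enter. The factor $\DT^2p$ is paired in $H^{-1/2}$ and estimated through the elliptic problem it satisfies, with boundary datum $\DT^2\mc$.

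\textbf{Boundary part (main obstacle).} On the boundary, $\DT^2p=\DT^2\mc=-\LB(\DT u\cdot\normal)+\text{l.o.t.}$ This is where surface tension produces the top-order term, which must cancel against $\frac{d}{dt}\frac12\norm{\Bdnabla(\DT u\cdot\normal)}^2_{L^2(\parOmega_t)}$. To see the cancellation, write $a\cdot\normal=\DT(\DT u\cdot\normal)-\DT u\cdot\DT\normal$ and integrate the Laplace--Beltrami operator by parts. What remains are commutator terms involving $\sff$, $\Bdnabla u$, $\DT\normal$, and also $\nabla\DT p\cdot\normal$ through the $\DT\mc$ formula. These are estimated by trace theorems, using $\norm{\sff}_{H^{1/2}}\lesssim C(\bd)$ from $\norm{\eta}_{H^{5/2}}$ and the bound $\norm{u_n}_{H^2}\le\bd$ from \eqref{eq_ass2}. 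Some factors need $\norm{\sff}_{H^{3/2}}$ or more; those are replaced via the elliptic bound $\norm{\nabla p}_{H^1}$, since $p=\mc$ on the boundary. This is the source of the factor $(1+\norm{\nabla p}_{H^1}^2)$ in \eqref{eq_pro1eq}.

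The delicate point is the low-regularity bookkeeping of terms like $\int_{\parOmega_t}\Bdnabla(\DT u\cdot\normal)\star\sff\star\nabla u\star\Bdnabla u_n$. I would handle these with $H^{1/2}$--$H^{-1/2}$ duality and the product estimates available on $H^{2+\vare}$-type surfaces, with all constants depending only on $\assone_T^{-1}$ and $\asstwo_T$. Collecting all contributions then yields \eqref{eq_pro1eq}.
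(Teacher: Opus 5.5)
Your outline is essentially the paper's proof. The top-order $h\cdot\nabla$ terms cancel by $\divergence h=0$ and $h\cdot\normal=0$ in both the $\DT^2$ and vorticity parts. The surface-tension term $-\LB(\DT u\cdot\normal)$ in $\DT^2p|_{\parOmega_t}$ cancels against $\frac{d}{dt}\frac12\norm{\Bdnabla(\DT u\cdot\normal)}_{L^2(\parOmega_t)}^2$, leaving $\err_p\in H^{\frac12}(\parOmega_t)$ paired with the $H^{-\frac12}$ normal trace of $\DT^2u$. The interior term $\int_{\Omega_t}\DT^2p\,\divergence\DT^2u\,dx$ is handled by duality, which the paper realizes by solving $-\lpe f=\divergence\DT^2u$ with $f|_{\parOmega_t}=0$ and using $\norm{\divergence\DT^2u}_{H^{\frac12}(\Omega_t)}^2\le C(\bd)(1+\norm{\nabla^2p}_{L^2(\Omega_t)})\Energy(t)$, exactly as you anticipate.

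Two terms in your remainder lists are misidentified, and neither actually occurs. First, $[\DT^2,\nabla]p$ contains only $\nabla\DT p$, not $\nabla^2\DT p$. Second, the boundary error $\err_p=\Bdnabla p\cdot\DT u+\Bdnabla^2u\star\Bdnabla u\star\normal-|\sff|^2\DT u\cdot\normal+\Bdnabla u\star\Bdnabla u\star\sff$ contains no $\partial_\normal\DT p$, and the $\sff\star\Bdnabla\DT u$ contributions cancel. Terms of those types could not be bounded by $\Energy(t)$ at this regularity, so this structure is what makes the argument close.
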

\begin{proof} 
We shall repeatedly apply the Reynolds transport formulas: 
\begin{align}
\frac{d}{dt}\int_{\Omega_t}fdx&=\int_{\Omega_t}\DT fdx,\label{eq_RT1}\\
\frac{d}{dt}\int_{\parOmega_t}fdS&=\int_{\parOmega_t}\DT f+f\Bddiv udS.\label{eq_RT2}
\end{align} 

\vspace{2mm}
\noindent\textbf{Step 1. Time evolution of $\norm{\DT^2u}_{L^2(\Omega_t)}^2$ and $\norm{\DT^2h}_{L^2(\Omega_t)}^2$.}

From \eqref{eq_MHD1}, we obtain 
\begin{align*}
\frac{d}{dt}\Paren{\frac12\norm{\DT^2u}_{L^2(\Omega_t)}^2}={}&\int_{\Omega_t}\DT^{3}u\cdot\DT^{2}udx\\
={}&\int_{\Omega_t}\DT^{2}\Paren{-\nabla p+h\cdot\nabla h}\cdot\DT^{2}udx\\
={}&-\int_{\Omega_t}[\DT^{2},\nabla]p\cdot\DT^{2}udx+\int_{\Omega_t}\DT^{2}\Paren{h^j\partial_jh_i}\DT^{2}u^idx-\int_{\Omega_t}\nabla\DT^{2}p\cdot\DT^{2}udx. 
\end{align*}
By direct computation, we have
\begin{align*}
\DT^{2}\Paren{h^j\partial_jh_i}={}&h^j\partial_j\DT^{2}h_i+\sum_{k=0}^{1}\DT^kh^j[\DT^{2-k},\partial_j]h_i+\sum_{k=1}^{2}\DT^kh^j\partial_j\DT^{2-k}h_i,\\
-\nabla\DT^{2}p\cdot\DT^{2}u={}&-\divergence\Paren{\DT^{2}p\DT^{2}u}+\DT^{2}p\divergence\DT^{2}u,
\end{align*}
and applying the divergence theorem, we deduce that
\begin{equation*}
\frac{d}{dt}\Paren{\frac12\norm{\DT^2u}_{L^2(\Omega_t)}^2}=-\int_{\Omega_t}[\DT^{2},\nabla]p\cdot\DT^{2}udx+\Upsilon_{1}(t)+\Theta_1(t)+\Theta_2(t)+\Pi_{1}(t)+\Psi(t),
\end{equation*} 
where
\begin{align*}
\Upsilon_{1}(t)\coloneqq{}&\int_{\Omega_t}h^j\partial_j\DT^{2}h_i\DT^{2}u^idx,\quad\Theta_1(t)\coloneqq\sum_{k=0}^{1}\int_{\Omega_t}\DT^kh^j[\DT^{2-k},\partial_j]h_i\DT^{2}u^idx,\\
\Theta_2(t)\coloneqq{}&\sum_{k=1}^{2}\int_{\Omega_t}\DT^kh^j\partial_j\DT^{2-k}h_i\DT^{2}u^idx,\quad\Pi_{1}(t)\coloneqq-\int_{\parOmega_t}\DT^{2}p\Paren{\DT^{2}u\cdot\normal}dS,\\
\Psi(t)\coloneqq{}&\int_{\Omega_t} \DT^{2}p\divergence\DT^{2}udx.
\end{align*}
Similarly, for the magnetic field, it follows from \eqref{eq_MHD2} that
\begin{equation*} 
\frac{d}{dt}\Paren{\frac12\norm{\DT^2h}_{L^2(\Omega_t)}^2}=\int_{\Omega_t}\DT^{2}\Paren{h^j\partial_ju^i}\DT^{2}h_idx=\Upsilon_2(t)+\Theta_3(t)+\Theta_4(t),
\end{equation*}
where
\begin{align*}
\Upsilon_2(t)&\coloneqq\int_{\Omega_t}h^j\partial_j\DT^{2}u^i \DT^{2}h_idx,\quad\Theta_3(t)\coloneqq\sum_{k=0}^{1}\int_{\Omega_t}\DT^k  h^j[\DT^{2-k},\partial_j]u^i\DT^{2}h_idx,\\
\Theta_4(t)&\coloneqq\sum_{k=1}^{2}\int_{\Omega_t}\DT^kh^j\partial_j\DT^{2-k}u^i\DT^{2}h_idx.
\end{align*}
Recalling from \eqref{eq_MHD3} and \eqref{eq_MHD4} that the magnetic 
field is solenoidal and tangent to the free boundary, integration by 
parts yields
\begin{equation*}
\Upsilon_{1}(t)+\Upsilon_{2}(t)=0.
\end{equation*}
Consequently, we conclude that
\begin{equation}\label{eq_step1}
\frac{d}{dt}\Paren{\frac12\norm{\DT^2u}_{L^2(\Omega_t)}^2+\frac12\norm{\DT^2h}_{L^2(\Omega_t)}^2}=-\int_{\Omega_t}[\DT^{2},\nabla]p\cdot\DT^{2}udx+\sum_{i=1}^{4}\Theta_i(t)+\Pi_{1}(t)+\Psi(t).
\end{equation}

\vspace{2mm}
\noindent\textbf{Step 2. Time evolution of 
$\norm{\Bdnabla\Paren{\DT u\cdot\normal}}_{L^2(\parOmega_t)}^2$.}

We apply the commutator formula
\begin{equation*}
[\DT,\Bdnabla](\,\cdot\,)=-\Paren{\Bdnabla u}^\top\Bdnabla(\, \cdot\,)
\end{equation*}
to deduce that
\begin{align}
&\frac{d}{dt}\Paren{\frac12\norm{\Bdnabla\Paren{\DT u\cdot\normal}}_{L^2(\parOmega_t)}^2}\nonumber\\
={}&\int_{\parOmega_t}\Paren{\Bdnabla\DT+[\DT,\Bdnabla]}\Paren{\DT u\cdot\normal}\cdot\Bdnabla\Paren{\DT u\cdot\normal}dS+\frac{1}{2}\int_{\parOmega_t}\module{\Bdnabla\Paren{\DT u\cdot\normal}}^2\Bddiv udS\nonumber\\ 
={}&\underbrace{\int_{\parOmega_t}\Bdnabla\Paren{\DT^{2}u\cdot\normal}\cdot\Bdnabla\Paren{\DT u\cdot\normal}dS}_{\eqqcolon\Pi_2(t)}+\int_{\parOmega_t}\Bdnabla\Paren{\DT u\cdot\DT\normal}\cdot \Bdnabla\Paren{\DT u\cdot\normal}dS\nonumber\\
&-\int_{\parOmega_t}\Paren{\Bdnabla u}^\top\Bdnabla\Paren{\DT u\cdot \normal}\cdot\Bdnabla\Paren{\DT u\cdot \normal}dS+\frac{1}{2}\int_{\parOmega_t}\module{\Bdnabla\Paren{\DT u\cdot \normal}}^2\Bddiv udS.\label{eq_last3}
\end{align}
By virtue of the assumed $H^3$ regularity of the velocity and the magnetic 
fields, the a priori assumptions \eqref{eq_aprioriassumption}, and 
\cite[Lemma B.2]{Luo2024}, we have
\begin{equation}\label{eq_Bdnabla_uinfty}
\norm{\Bdnabla u}_{L^\infty(\parOmega_t)}\le \norm{\nabla u}_{L^\infty(\Omega_t)}\le C(\bd),
\end{equation} 
where we used the fact that $\nabla u$ can be continuously extended 
to $\overline{\Omega_t}$ by the Sobolev embedding theorem.
Therefore, applying Cauchy's inequality, the last three terms in 
\eqref{eq_last3} can be controlled by 
\begin{align*}
\module{\text{last three terms in \eqref{eq_last3}}}&\le C\Paren{\norm{\Bdnabla u}_{L^\infty(\parOmega_t)}+1}\norm{\Bdnabla\Paren{\DT u\cdot\normal}}_{L^2(\parOmega_t)}^2+\norm{\Bdnabla\Paren{\DT u\cdot\DT\normal}}_{L^2(\parOmega_t)}^2\\
&\le C(\bd)\Energy(t)+\Phi_1(t),
\end{align*}
where
\begin{equation*}
\Phi_1(t)\coloneqq \norm{\Bdnabla\Paren{\DT u\cdot\DT\normal}}_{L^2(\parOmega_t)}^2.
\end{equation*}

Next, we proceed to treat the first term $\Pi_2(t)$. By the divergence 
theorem, it follows that
\begin{equation}\label{eq_divbd}
\int_{\parOmega_t}\Bdnabla f\cdot\Bdnabla gdS=-\int_{\parOmega_t}\LB fgdS,
\end{equation}
where the Laplace-Beltrami operator is defined by 
\begin{equation*}
\LB (\,\cdot\,)\coloneqq \Bddiv\Bdnabla (\,\cdot\,).
\end{equation*}
We rewrite $\Pi_2(t)$ as
\begin{equation*}
\Pi_2(t)=-\int_{\parOmega_t}\DT^{2}u\cdot\normal\LB\Paren{\DT u\cdot\normal}dS.
\end{equation*}
Noting from \eqref{eq_MHD4} that $h_\normal=0$ and thus the tangential derivative on the free boundary is independent of the magnetic field, we can apply the error formula for the pressure established in \cite[Lemma 2.5]{Hao2025a} 
\begin{equation}\label{eq_err_DT2p}
\DT^2p=-\LB\Paren{\DT  u\cdot\normal}+\err_p,
\end{equation}
to deduce that
\begin{equation*}
\Pi_1(t)+\Pi_2(t)=-\int_{\parOmega_t}\err_p\Paren{\DT^2u\cdot\normal}dS.
\end{equation*} 
Here $\Pi_1(t)$ appears in \eqref{eq_step1}, and the error term is given by
\begin{equation*}  
\err_p=\Bdnabla p\cdot\DT u+ \Bdnabla^2u\star\Bdnabla u\star\normal-|\sff|^2\DT u\cdot\normal+\Bdnabla u\star\Bdnabla u\star\sff.
\end{equation*}

Applying the normal trace theorem (cf. \cite[Theorem 3.1]{Cheng2007} 
and \cite[Lemma 5.1]{Coutand2010}), it is clear that 
\begin{align*}
\module{\Pi_1(t)+\Pi_2(t)} &\le C\Paren{\norm{\DT^{2}u\cdot\normal}_{H^{-\frac{1}{2}}(\parOmega_t)}^2+\norm{\err_p}_{H^{\frac{1}{2}}(\parOmega_t)}^2}\\
&\le C\Paren{\Energy(t)+\norm{\divergence\DT^{2}u}_{H^{-1}(\Omega_t)}^2+\norm{\err_p}_{H^{\frac{1}{2}}(\parOmega_t)}^2},
\end{align*}
and by a duality argument, 
\begin{align*}
\norm{\divergence\DT^{2}u}_{H^{-1}(\Omega_t)}\le{}&\sup \Brace{\module{\int_{\Omega_t}\divergence \DT^{2}u vdx}:v\in H^1_0(\Omega_t),\norm{v}_{H^1_0(\Omega_t)}\le 1}\\
\le{}&\sup \Brace{\module{\int_{\Omega_t}\DT^{2}u\cdot \nabla vdx}:v\in H^1_0(\Omega_t),\norm{v}_{H^1_0(\Omega_t)}\le 1}\\
\le{}& \norm{\DT^{2}u}_{L^{2}(\Omega_t)}\\
\le{}& \sqrt{\Energy(t)}. 
\end{align*} 

We claim that
\begin{equation}\label{eq_errp}
\norm{\err_p}_{H^{\frac 12}(\parOmega_t)}^2\le C\Paren{1+\norm{\nabla p}_{H^1(\Omega_t)}^2} \Energy(t).
\end{equation}
To verify this, we recall that for $u\in H^1(\Omega)$, it holds 
(cf. \cite[Appendix]{Hao2025}):
\begin{align}
\norm{u}_{H^{\frac 12}(\parOmega)}&\le \norm{u}_{L^2(\parOmega)}+\norm{\nabla u}_{L^2(\Omega)},\label{eq_Hfrac121}\\
\norm{u}_{H^1(\Omega)}&\le C\Paren{\norm{\lpe u}_{L^2(\Omega)}+\norm{u}_{H^{\frac 12}(\parOmega)}}.\label{eq_Hfrac123}
\end{align} 
We denote
\begin{equation*}
\err_p=\underbrace{\Bdnabla p\cdot\DT u}_{\eqqcolon K_1}+ \underbrace{\Bdnabla^2u\star\Bdnabla u\star\normal}_{\eqqcolon K_2}-\underbrace{|\sff|^2\DT u\cdot\normal}_{\eqqcolon K_3} +\underbrace{\Bdnabla u\star\Bdnabla u\star \sff}_{\eqqcolon K_4}.
\end{equation*}
To estimate $K_1$, by \eqref{eq_MHD1}, the a priori assumptions \eqref{eq_aprioriassumption}, the trace theorem, and \eqref{eq_Hfrac121}, we obtain
\begin{align*}
\norm{K_1}_{H^{\frac 12}(\parOmega_t)}^2\le{}&C\Paren{\norm{\Bdnabla p\cdot\DT u}_{L^{2}(\parOmega_t)}^2+\norm{\nabla\Paren{\nabla p\star\DT u}}_{L^{2}(\Omega_t)}^2}\\
\le{}& C\bigg(\norm{\Bdnabla p}_{L^{4}(\parOmega_t)}^2\norm{\DT u}_{L^{4}(\parOmega_t)}^2+\norm{\nabla\Paren{-\DT u+h\cdot\nabla h}\star\Paren{-\nabla p+h\cdot\nabla h}}_{L^{2}(\Omega_t)}^2\\
&\qquad+\norm{\nabla p\star\nabla\DT u}_{L^{2}(\Omega_t)}^2\bigg)\\
\le{}&C(\bd)\bigg[\norm{\nabla p}_{H^1(\Omega_t)}^2\Energy(t)+\norm{\nabla\DT u\star\nabla h\star h}_{L^{2}(\Omega_t)}^2+\norm{\nabla\Paren{h\cdot\nabla h}\star\DT u}_{L^{2}(\Omega_t)}^2\\
&\qquad\quad\ +\Paren{\norm{\nabla p}_{L^{3}(\Omega_t)}^2+\norm{\nabla h}_{L^{3}(\Omega_t)}^2}\norm{\nabla^2 h}_{L^{6}(\Omega_t)}^2+\norm{\nabla p}_{L^{6}(\Omega_t)}^2\norm{\nabla\DT u}_{L^{3}(\Omega_t)}^2\bigg]\\ 
\le{}&C(\bd)\Paren{1+\norm{\nabla p}_{H^{1}(\Omega_t)}^2}\Energy(t).
\end{align*}
Similarly, invoking \eqref{eq_Hfrac121} again, we find that 
\begin{equation*}
\norm{K_2}_{H^{\frac 12}(\parOmega_t)}^2 \le C\Paren{\norm{\Bdnabla^2u\star\Bdnabla u\star\normal}_{L^2(\parOmega_t)}^2+\norm{\nabla\Paren{\Bdnabla^2u\star\Bdnabla u\star\normal}}_{L^2(\Omega_t)}^2}.
\end{equation*}
It suffices to estimate the second term using \eqref{eq_aprioriassumption} 
and the Gagliardo-Nirenberg inequality:
\begin{equation*}
\norm{\nabla^2u\star\nabla^2u\star\underbrace{\normal\star\normal\star\cdots\star\normal}_{\text{finite $\star$ product}}}_{L^2(\Omega_t)}^2\le C\norm{\nabla^2u}_{L^4(\Omega_t)}^4\le C\norm{\nabla u}_{L^\infty(\Omega_t)}^2\norm{\nabla u}_{H^2(\Omega_t)}^2\le C(\bd)\Energy(t).
\end{equation*}
Regarding $K_3$, it is clear from \eqref{eq_aprioriassumption} and the 
trace theorem that
\begin{equation*}
\norm{\DT u\cdot\normal}_{L^{2}(\parOmega_t)}^2\le C\norm{\DT u}_{H^{1}(\Omega_t)}^2\le C \Energy(t),\ \text{and thus}\ \norm{\DT u\cdot\normal}_{H^1(\parOmega_t)}^2 \le C \Energy(t).
\end{equation*} 
Furthermore, Sobolev embedding yields
\begin{equation}\label{eq_sffL4}
\norm{\sff}_{L^4(\parOmega_t)}\le C\norm{\sff}_{H^{\frac 12}(\parOmega_t)}\le C(\bd),
\end{equation}  
which, combined with the Kato-Ponce inequality, gives
\begin{align*}
\norm{K_3}_{H^{\frac 12}(\parOmega_t)}^2\le{}& C\norm{|\sff|^2}_{L^4(\parOmega_t)}^2\norm{\DT u\cdot\normal}_{W^{\frac{1}{2},4}(\parOmega_t)}^2+C\norm{|\sff|^2}_{W^{\frac12,4}(\parOmega_t)}^2\norm{\DT u\cdot\normal}_{L^{4}(\parOmega_t)}^2\\
\le{}& C\norm{|\sff|^2}_{H^1(\parOmega_t)}^2\norm{\DT u\cdot\normal}_{H^{1}(\parOmega_t)}^2\\
\le{}& C(\bd)\Paren{1+\norm{\Bdnabla \sff\star \sff}_{L^2(\parOmega_t)}^2}\Energy(t)\\
\le{}& C(\bd)\Paren{1+\norm{\Bdnabla \sff}_{L^4(\parOmega_t)}^2}\Energy(t).
\end{align*}
To estimate $\norm{\Bdnabla \sff}_{L^4(\parOmega_t)}$, we need to apply the following regularity result (cf. \cite[Proposition
2.12]{Julin2024} and \cite[Lemma 2.6]{Hao2025a}):
\begin{lemma}\label{lem_sffcontrolledbymc}
Let $\Omega$ be a bounded domain with a $C^{1,\alpha}$ boundary 
($\alpha \in (0,1)$). Then the second fundamental form $\sff$ 
and the mean curvature $\mc$ satisfy the following property: 
for every $p \in (1, \infty)$, we have
\begin{equation*}
\norm{\sff}_{L^p(\parOmega)} \le C \Paren{1+\norm{\mc}_{L^p(\parOmega)}}.
\end{equation*}
If, in addition, $\norm{\sff}_{L^4(\parOmega)} \le \bd$ for some 
positive constant $\bd$, then we have
\begin{equation*}
\norm{\sff}_{H^{k}(\parOmega)}\le C(\bd)\Paren{1+\norm{\mc}_{H^{k}(\parOmega)}},
\quad k\in\Brace{1/2, 1, 3/2, 2}.
\end{equation*}
\end{lemma}
Applying \eqref{eq_sffL4}, Lemma \ref{lem_sffcontrolledbymc}, and the 
trace theorem once more, we obtain
\begin{equation*}
\norm{K_3}_{H^{\frac 12}(\parOmega_t)}^2\le C(\bd)\Paren{1+\norm{\nabla p}_{H^{1}(\Omega_t)}^2}\Energy(t).
\end{equation*}
The term $K_4$ can be estimated analogously. Thus, \eqref{eq_errp} is established. Consequently,
\begin{equation*}
\module{\Pi_1(t)+\Pi_2(t)}\le C(\bd)\Paren{1+\norm{\nabla p}_{H^1(\Omega_t)}^2} \Energy(t),
\end{equation*}
and returning to \eqref{eq_step1}, we deduce that
\begin{align}
&\module{\frac{d}{dt}\frac12\Paren{\norm{\DT^2u}_{L^2(\Omega_t)}^2+\norm{\DT^2h}_{L^2(\Omega_t)}^2+\norm{\Bdnabla\Paren{\DT u\cdot\normal}}_{L^2(\parOmega_t)}^2}}\nonumber\\
\le{}&C(\bd)\Paren{1+\norm{\nabla p}_{H^1(\Omega_t)}^2} \Energy(t)+\module{\sum_{i=1}^{4}\Theta_i(t)}+\Phi_1(t)+\Phi_2(t)+\module{\Psi(t)}, \label{eq_step2}
\end{align}
where
\begin{equation*}
\Phi_2(t)\coloneqq\norm{[\DT^{2},\nabla] p}_{L^2(\Omega_t)}^2.
\end{equation*}

\vspace{2mm}
\noindent\textbf{Step 3. Time evolution of 
$\norm{\nabla^2\Paren{\vorticity u}}_{L^2(\Omega_t)}^2$ and 
$\norm{\nabla^2\Paren{\vorticity h}}_{L^2(\Omega_t)}^2$.}

For the last two terms in $\energy(t)$, it follows from 
\cite[Lemma 2.10]{Hao2025} that
\begin{align*}
&\frac{d}{dt}\Paren{\frac12\norm{\nabla^2\Paren{\vorticity u}}_{L^2(\Omega_t)}^2+\frac12\norm{\nabla^2\Paren{\vorticity h}}_{L^2(\Omega_t)}^2}\\ 
={}&\int_{\Omega_t}\sum_{|\alpha|=1}\Paren{h\cdot\nabla}\nabla^{\alpha}\Paren{\vorticity h}:\nabla^{\alpha}\Paren{\vorticity u}dx+\int_{\Omega_t}\sum_{|\alpha|=1}\Paren{h\cdot\nabla}\nabla^{\alpha}\Paren{\vorticity u}:\nabla^{\alpha}\Paren{\vorticity h}dx\\
&+\int_{\Omega_t}\bigg[\nabla u\star \nabla^{2}\Paren{\vorticity u}+\nabla^{3}u\star\Paren{\vorticity u}+\sum_{|\beta|=2}\nabla^{1+\beta_1}h\star\nabla^{\beta_{2}}\Paren{\vorticity h}\\
&\quad\quad\quad\quad+\sum_{i\leq1}\nabla^{1+i}u\star\nabla\Paren{\vorticity u}\bigg]\star\nabla^{2}\Paren{\vorticity u}dx\\
&+\int_{\Omega_t}\bigg[\nabla u\star\nabla^2\Paren{\vorticity h}+\sum_{|\beta|=2}\nabla^{1+\beta_1}u\star\nabla^{1+\beta_{2}}h+\sum_{i\leq 1}\nabla^{1+i}u\star\nabla\Paren{\vorticity h}\bigg]\star\nabla^2\Paren{\vorticity h}dx.
\end{align*}
By virtue of \eqref{eq_MHD3} and \eqref{eq_MHD4}, applying integration by parts to the first two terms yields
\begin{align*}
&\int_{\Omega_t}\sum_{|\alpha|=1}\Paren{h\cdot\nabla}\nabla^{\alpha}\Paren{\vorticity h}:\nabla^{\alpha}\Paren{\vorticity u}dx+\int_{\Omega_t}\sum_{|\alpha|=1}\Paren{h\cdot\nabla}\nabla^{\alpha}\Paren{\vorticity u}:\nabla^{\alpha}\Paren{\vorticity h}dx\\
={}&\int_{\Omega_t}\sum_{|\alpha|=1}\Paren{h\cdot\nabla}\Bracket{\nabla^{\alpha}\Paren{\vorticity h}:\nabla^{\alpha}\Paren{\vorticity u}}dx\\
={}&-\int_{\Omega_t}\sum_{|\alpha|=1}\underbrace{\divergence h}_{=0}\Bracket{\nabla^{\alpha}\Paren{\vorticity h}:\nabla^{\alpha}\Paren{\vorticity u}}dx+\int_{\parOmega_t}\sum_{|\alpha|=1}\underbrace{h\cdot\normal}_{=0}\Bracket{\nabla^{\alpha}\Paren{\vorticity h}:\nabla^{\alpha}\Paren{\vorticity u}}dS\\
={}&0.
\end{align*} 
Consequently, invoking the a priori assumptions \eqref{eq_aprioriassumption}, 
we deduce that
\begin{align*}
&\module{\frac{d}{dt}\frac12\Paren{\norm{\nabla^2\Paren{\vorticity u}}_{L^2(\Omega_t)}^2+\norm{\nabla^2\Paren{\vorticity h}}_{L^2(\Omega_t)}^2}}\nonumber\\
\le{}& C\Paren{\norm{\nabla u}_{L^\infty(\Omega_t)}^2+\norm{\nabla h}_{L^{\infty}(\Omega_t)}^2+1} \Paren{\norm{u}_{H^3(\Omega_t)}^2+\norm{h}_{H^3(\Omega_t)}^2}\nonumber\\
\le{}& C(\bd)\Energy(t).
\end{align*} 
Combining this estimate with \eqref{eq_step2}, we arrive at
\begin{equation}\label{eq_step3}
\module{\frac{d}{dt}\energy(t)}
\le C(\bd)\Paren{1+\norm{\nabla p}_{H^1(\Omega_t)}^2} \Energy(t)+\module{\sum_{i=1}^{4}\Theta_i(t)}+\Phi_1(t)+\Phi_2(t)+\module{\Psi(t)}.
\end{equation}  

\vspace{2mm}
\noindent\textbf{Step 4. Estimates of $\Theta_i(t),i=1,\cdots,4$.} 

Let $(f,g)$ be either $(u,h)$ or $(h,u)$. By \eqref{eq_MHD2} and the 
a priori assumptions \eqref{eq_aprioriassumption}, it follows that
\begin{align*}
&\module{\sum_{k=1}^{2}\int_{\Omega_t}\DT^kh^j\partial_j\DT^{2-k}f_i\DT^{2}g^idx}\\
\le{}&C\Paren{\sum_{k=1}^{2}\norm{\DT^kh^j\partial_j\DT^{2-k}f}_{L^2(\Omega_t)}^2+\norm{\DT^2g}_{L^2(\Omega_t)}^2}\\
\le{}&C\Paren{\Energy(t)+\norm{h\cdot\nabla u}_{L^6(\Omega_t)}^2\norm{\nabla\DT f}_{L^3(\Omega_t)}^2+\norm{\DT^2h}_{L^2(\Omega_t)}^2\norm{\nabla f}_{L^\infty(\Omega_t)}^2}\\ 
\le{}&C(\bd)\Energy(t).
\end{align*} 
Combining this with the commutator formulas (cf. \cite[Lemma 2.2]{Hao2025}):
\begin{align}
[\DT,\nabla](\,\cdot\,)&=-\Paren{\nabla u}^\top\nabla(\,\cdot\,),\label{eq_Dt,nabla}\\
[\DT^j,\nabla](\,\cdot\,)&=\sum_{2\le m\le j+1}\sum_{|\beta|\leq j+1-m}\nabla\DT^{\beta_1}u\star\cdots\star\nabla\DT^{\beta_{m-1}}u\star\nabla\DT^{\beta_{m}}(\,\cdot\,),\quad j\ge 2,\nonumber
\end{align}
we deduce that
\begin{align*}
&\module{\sum_{k=0}^{1}\int_{\Omega_t}\DT^kh^j[\DT^{2-k},\partial_j]f_i \DT^{2}g^idx}\\ 
\le{}&C\Paren{\norm{\DT^2g}_{L^2(\Omega_t)}^2+\norm{h^j[\DT^{2},\partial_j]f}_{L^2(\Omega_t)}^2+\norm{\DT h^j[\DT,\partial_j]f}_{L^2(\Omega_t)}^2}\\
\le{}&C\Big(\Energy(t)+\norm{h\star
\Paren{\nabla \DT u\star \nabla f+\nabla u \star \nabla \DT f+\nabla u\star \nabla u\star \nabla f}}_{L^2(\Omega_t)}^2\\
&\qquad+\norm{h\star\nabla u\star\nabla u\star \nabla f}_{L^2(\Omega_t)}^2 \Big)\\
\le{}&C(\bd) \Energy(t).
\end{align*}
Consequently, we obtain
\begin{equation}\label{eq_step4}
\module{\sum_{i=1}^{4}\Theta_i(t)}\le C(\bd) \Energy(t).
\end{equation}

\vspace{2mm}
\noindent\textbf{Step 5. Estimates of $\Phi_1(t)$ and $\Phi_2(t)$.}

Invoking the a priori assumptions \eqref{eq_aprioriassumption}, the 
fundamental identity
\begin{equation}\label{eq_DTn}
\DT \normal=-\Paren{\Bdnabla u}^\top \normal,
\end{equation}
the trace theorem, and \eqref{eq_Bdnabla_uinfty}, we deduce that
\begin{align*}
\Phi_1(t)\le{}& \norm{\Bdnabla \DT u\star\DT\normal}_{L^2(\parOmega_t)}^2+\norm{\DT u\star\Bdnabla\DT\normal}_{L^2(\parOmega_t)}^2\\
\le{}& C\Paren{\norm{\Bdnabla u\star\normal}_{L^\infty(\parOmega_t)}^2\norm{\DT u}_{H^{\frac32}(\Omega_t)}^2+\norm{\DT u\star\Bdnabla^2u\star\normal}_{L^2(\parOmega_t)}^2+\norm{\DT u\star\Bdnabla u\star \sff}_{L^2(\parOmega_t)}^2}\\
\le{}& C(\bd)\Paren{\Energy(t)+\norm{\DT u\star\Bdnabla^2u}_{L^2(\parOmega_t)}^2+\norm{\DT u\star \sff}_{L^2(\parOmega_t)}^2}\\
\le{}& C(\bd) \Energy(t).
\end{align*} 
In the final step, we utilized  Sobolev embedding
\begin{equation*}
\norm{\DT u\star\sff}_{L^2(\parOmega_t)}^2\le C\norm{\DT u}_{L^4(\parOmega_t)}^2\norm{\sff}_{L^4(\parOmega_t)}^2\le  C(\bd)\Energy(t),
\end{equation*}
along with the estimate
\begin{align*}  
\norm{\DT u\star \Bdnabla^2u}_{L^2(\parOmega_t)}^2&\le \norm{\DT u}_{L^4(\parOmega_t)}^2\norm{\Bdnabla^2u}_{L^4(\parOmega_t)}^2\\
&\le \Paren{\norm{\nabla p}_{H^{1}(\Omega_t)}^2+\norm{\nabla h}_{L^{\infty}(\parOmega_t)}^2\norm{h}_{H^{1}(\Omega_t)}^2}\norm{u}_{H^{3}(\Omega_t)}^2\\
&\le C(\bd)\Paren{1+\norm{\nabla p}_{H^1(\Omega_t)}^2}\Energy(t),
\end{align*} 
which follows from \eqref{eq_sffL4}, the bound $\norm{\nabla h}_{L^\infty(\parOmega_t)}\le C(\bd)$ as in \eqref{eq_Bdnabla_uinfty}, and the trace theorem.

Regarding $\Phi_2(t)$, we recall from \cite[Lemma 2.9]{Hao2025} that
\begin{equation*}
[\DT^{2},\nabla]p=\sum\limits_{i\leq 1}\nabla\DT^{i} u\star\nabla h\star h+\err_{\one}+\err_{u,h},
\end{equation*} 
where
\begin{align}
\err_{\one}&=\sum_{|\beta|\leq1,|\alpha|\le1}a_{\alpha,\beta}(\nabla u) \nabla\DT^{\beta_1}u\star\nabla^{\alpha_1}\DT^{\alpha_2+\beta_{2}}u,\label{eq_err21}\\
\err_{u,h}&=\sum_{|\alpha|\le 1}a_{\alpha}(\nabla u)\nabla^{1+\alpha_1}u\star \nabla^{\alpha_{2}} h\star h.\label{eq_err22}
\end{align}
Here, $a_{\alpha,\beta}(\nabla u)$ and $a_{\alpha}(\nabla u)$ denote finite $\star$ products of $\nabla u$.

Applying \eqref{eq_err22}, the Kato-Ponce inequality, 
and the a priori assumptions \eqref{eq_aprioriassumption}, we obtain 
\begin{equation}\label{eq_errnablahh}
\norm{\err_{u,h}}_{L^2(\Omega_t)}^2
\le C(\bd)\Energy(t),
\end{equation}
and consequently,
\begin{equation*}
\Phi_2(t)\le C(\bd)\Energy(t)+C\norm{\err_{\one}}_{L^2(\Omega_t)}^2.
\end{equation*}
We proceed to show that
\begin{equation}\label{eq_errtwo} 
\norm{\err_{\one}}_{L^{2}(\Omega_t)}^2 \le C(\bd)\Paren{1+\norm{\nabla p}_{H^{1}(\Omega_t)}^2} \Energy(t).
\end{equation}
Let us consider the case where $|\beta|=1$ and $|\alpha|=1$ in \eqref{eq_err21}. 
For $\beta_1=1,$ we need to estimate
\begin{equation*}
\norm{\underbrace{\nabla u\star\nabla u\star\cdots\star\nabla u}_{\text{finite $\star$ product}}\star\nabla\DT u}_{L^2(\Omega_t)}^2\le C(\bd)\Energy(t),
\end{equation*}
and
\begin{align*}
\norm{\underbrace{\nabla u\star\nabla u\star\cdots\star\nabla u}_{\text{finite $\star$ product}}\star\nabla \DT u\star \DT u}_{L^2(\Omega_t)}^2&\le C(\bd) \norm{\nabla \DT u}_{L^3(\Omega_t)}^2\norm{-\nabla p+ h\cdot\nabla h}_{L^6(\Omega_t)}^2\\
&\le C(\bd)\Paren{1+\norm{\nabla p}_{H^1(\Omega_t)}^2} \Energy(t)
\end{align*}
where we have used \eqref{eq_aprioriassumption} and \eqref{eq_MHD1}.
The remaining case where $\beta_2=1$ is easier to bound, and we omit the details. 
Thus, the estimate \eqref{eq_errtwo} is established, which leads to 
\begin{equation}\label{eq_step5} 
\Phi_1(t)+\Phi_2(t)\le C(\bd)\Paren{1+\norm{\nabla p}_{H^{1}(\Omega_t)}^2} \Energy(t). 
\end{equation} 

\vspace{2mm}
\noindent\textbf{Step 6. Estimate of $\Psi(t)$.}

Let $f$ be the solution to 
\begin{equation}\label{eq_elliptic1}
\begin{cases}
-\lpe f=\divergence \DT^{2}u, & \text{in}\ \Omega_t,\\
f=0, &  \text{on}\ \parOmega_t.
\end{cases}
\end{equation}
Integrating by parts, we obtain
\begin{equation*}
\Psi(t)=\int_{\Omega_t} \DT^{2}p\divergence\DT^{2}udx
=-\int_{\Omega_t}\lpe\DT^{2} p fdx-\int_{\parOmega_t}\DT^{2} p\partial_\normal fdS\eqqcolon \Psi_1(t)+\Psi_2(t).
\end{equation*} 
Applying integration by parts again, alongside \cite[Lemma 2.11]{Hao2025} and the divergence theorem, it follows that
\begin{align*}
\Psi_1(t)={}&\int_{\Omega_t}\bigg[\divergence\divergence\Paren{u\otimes\DT^{2}u}+\divergence\bigg(\err_{\one}+\sum_{\substack{i\leq 1}}\nabla\DT^{i}u\star\nabla h\star h+ \err_{u,h}\bigg)\bigg]fdx\\
&-\int_{\Omega_t}\divergence\DT^{2}\Paren{h\cdot\nabla h}fdx\\
={}&\underbrace{\int_{\Omega_t}\Paren{u\otimes \DT^{2}u}:\nabla^2f-\Paren{\err_{\one}+\err_{u,h}+\sum_{\substack{i\leq 1}}\nabla\DT^{i}u\star\nabla h\star h}\cdot\nabla fdx}_{\eqqcolon\Psi_1^a(t)}\\
&\underbrace{-\int_{\parOmega_t}u^i\DT^{2} u^j\partial_i f\normal_jdS}_{\eqqcolon\Psi_1^b(t)}\underbrace{-\int_{\Omega_t}\divergence\DT^{2}\Paren{h\cdot\nabla h}  fdx}_{\eqqcolon\Psi_1^c(t)},
\end{align*}
where $\err_{\one}$ and $\err_{u,h}$ are defined as in \eqref{eq_err21} and \eqref{eq_err22}.

Using \eqref{eq_errnablahh} and \eqref{eq_errtwo}, we can bound the first term by
\begin{equation*}
\module{\Psi_1^a(t)}\le C(\bd)\Paren{1+\norm{\nabla p}_{H^{1}(\Omega_t)}^2} \Energy(t)+C(\bd)\norm{f}_{H^2(\Omega_t)}^2,
\end{equation*}
and the second term can be estimated as
\begin{align*}
\module{\Psi_1^b(t)}={}&\module{\int_{\Omega_t}\divergence\Paren{u^i\DT^{2}u\partial_if}dx}\\
={}&\module{\int_{\Omega_t}\nabla u\star\DT^{2}u\star\nabla f+u\star\divergence\DT^{2}u\star\nabla f+u\star\DT^{2}u\star\nabla^2 fdx}\\
\le{}&C(\bd)\Paren{\norm{f}_{H^2(\Omega_t)}^2+\Energy(t)+\norm{\divergence\DT^{2}u}_{L^2(\Omega_t)}^2}.
\end{align*}

To estimate the last term $\Psi_1^c(t)$, we use \cite[Lemma 2.7]{Hao2025}, which gives
\begin{equation*}
\divergence\DT^{2}\Paren{h\cdot \nabla h}=\partial_i\partial_m\DT u^j \partial_jh^mh^i+\nabla^3 u\star h\star h+\operatorname{l.o.t.},
\end{equation*}
and consequently,
\begin{equation*}
\module{\Psi_1^c(t)}\le C\Paren{\module{\int_{\Omega_t}\partial_i\partial_m\DT u^j \partial_jh^mh^ifdx}+\norm{f}_{L^2(\Omega_t)}^2+\norm{\nabla^3u\star h\star h}_{L^2(\Omega_t)}^2+\module{\Psi^{cl}_1(t)}},
\end{equation*}
where $\Psi^{cl}_1(t)$ contains lower-order terms (at most $\nabla^2u$). 
Applying boundary condition \eqref{eq_MHD4} 
and the a priori assumptions \eqref{eq_aprioriassumption}, we have
\begin{align*}
\module{\int_{\Omega_t}\partial_i\partial_m\DT u^j\partial_jh^mh^ifdx}={}&\module{\int_{\Omega_t}\partial_m\DT  u^j\partial_i \partial_jh^mh^if+\partial_m\DT u^j \partial_jh^mh^i\partial_ifdx}\\
\le{}&C(\bd)\Paren{\Energy(t)+\norm{f}_{H^1(\Omega_t)}^2}.
\end{align*}  
Furthermore, we have
\begin{equation*}
\norm{\nabla^3u\star h\star h}_{L^2(\Omega_t)}^2\le C(\bd)\Energy(t),
\end{equation*}
and $\Psi^{cl}_1(t)$ can be estimated in the same fashion as before. 
Combining these with the elliptic estimate for \eqref{eq_elliptic1},
\begin{equation*}
\norm{f}_{H^2(\Omega_t)}^2\le C\norm{\divergence \DT^{2}u}_{L^2(\Omega_t)}^2,
\end{equation*}
we obtain
\begin{equation*}
\module{\Psi_1(t)}\le C(\bd)\Paren{1+\norm{\nabla p}_{H^{1}(\Omega_t)}^2} \Energy(t)+C(\bd)\norm{\divergence\DT^{2}u}_{L^2(\Omega_t)}^2.
\end{equation*}

Note that by direct calculation, we can write
\begin{equation*}
\divergence \DT^{2}u=\sum_{|\beta|\leq 1}\nabla\DT^{\beta_1}u\star\nabla\DT^{\beta_2}u.                      
\end{equation*}
Although it suffices to bound $\norm{\divergence \DT^{2}u}_{L^2(\Omega_t)}$ 
at this stage, we estimate
$\norm{\divergence \DT^{2}u}_{H^{\frac 12}(\Omega_t)}$ for subsequent control of $\Psi_2(t)$. 
To this end, we restrict our attention to the case where $|\beta|=1$. 
From \eqref{eq_aprioriassumption} and the Kato-Ponce inequality, we see that
\begin{equation*}
\norm{\nabla u\star\nabla\DT u}_{H^{\frac 12}(\Omega_t)}
\le C\Paren{\norm{\nabla u}_{L^\infty(\Omega_t)}\norm{\nabla\DT u}_{H^{\frac 12}(\Omega_t)}+ \norm{\nabla u}_{W^{\frac 12,12}(\Omega_t)}\norm{\nabla \DT u}_{L^{\frac{12}{5}}(\Omega_t)}}. 
\end{equation*}
It is clear that
\begin{equation*}
\norm{\nabla u}_{L^\infty(\Omega_t)}\norm{\nabla \DT u}_{H^{\frac 12}(\Omega_t)}\le C(\bd)\sqrt{\Energy(t)}.
\end{equation*}
Invoking \eqref{eq_aprioriassumption} and the Gagliardo-Nirenberg 
inequality, we obtain
\begin{equation*}
\norm{\nabla u}_{W^{\frac 12,12}(\Omega_t)}\le C\sqrt{\norm{\nabla u}_{W^{1,6}(\Omega_t)}\norm{\nabla u}_{L^\infty(\Omega_t)}}\le C(\bd)\sqrt[4]{\Energy(t)},
\end{equation*}
and
\begin{align*}
\norm{\nabla\DT u}_{L^{\frac{12}{5}}(\Omega_t)}&\le C\norm{\nabla\DT u}_{H^{\frac14}(\Omega_t)}\\
&\le C\sqrt{\norm{\nabla\DT u}_{L^2(\Omega_t)}\norm{\nabla\DT u}_{H^{\frac12}(\Omega_t)}}\\
&\le C\sqrt{\norm{\nabla^2p}_{L^2(\Omega_t)}+\norm{\nabla\Paren{h\cdot\nabla h}}_{L^2(\Omega_t)}}\sqrt[4]{\Energy(t)}\\
&\le C(\bd)\sqrt{1+\norm{\nabla^2p}_{L^2(\Omega_t)}}\sqrt[4]{\Energy(t)}.
\end{align*}
As a consequence, we have
\begin{equation*}
\norm{\nabla u\star \nabla \DT u}_{H^{\frac 12}(\Omega_t)}^2\le C(\bd)\Paren{1+\norm{\nabla^2p}_{L^2(\Omega_t)}}\Energy(t),
\end{equation*} 
and it follows that
\begin{equation}\label{eq_err111}
\norm{\divergence \DT^{2}u}_{H^{\frac{1}{2}}(\Omega_t)}^2 \le C(\bd) \Paren{1+\norm{\nabla^2 p}_{L^2(\Omega_t)}}  \Energy(t).
\end{equation}
This yields
\begin{equation*}
\module{\Psi_1(t)}\le C(\bd)\Paren{1+\norm{\nabla p}_{H^{1}(\Omega_t)}^2} \Energy(t).
\end{equation*} 

We are left with $\Psi_2(t)$. Applying \eqref{eq_err_DT2p} and 
integrating by parts, we have
\begin{align*}
\Psi_2(t)&=-\int_{\parOmega_t}\Bracket{-\LB\Paren{\DT u\cdot\normal}+\err_p}\partial_\normal fdS\\
&=-\int_{\parOmega_t} \Bdnabla\Paren{\DT u\cdot\normal}\cdot\Bdnabla\partial_\normal fdS-\int_{\parOmega_t}\err_p\partial_\normal fdS.
\end{align*} 
Then, applying the elliptic estimate (cf. \cite[Proposition 3.8]{Julin2024}), 
along with \eqref{eq_err111} and \eqref{eq_errp}, we deduce that 
\begin{align*}
\module{\Psi_2(t)}&\le C\Paren{\norm{\Bdnabla\Paren{\DT u\cdot\normal}}_{L^2(\parOmega_t)}^2+\norm{\partial_\normal f}_{H^1(\parOmega_t)}^2+\norm{\err_p}_{L^2(\parOmega_t)}^2}\\
&\le C\Paren{\Energy(t)+\norm{\divergence \DT^{2}u}_{H^{\frac12}(\Omega_t)}^2+\norm{\err_p}_{L^2(\parOmega_t)}^2}\\
&\le C(\bd)\Paren{1+\norm{\nabla p}_{H^{1}(\Omega_t)}^2} \Energy(t).
\end{align*} 
We conclude that
\begin{equation}\label{eq_step6}
\module{\Psi(t)}\le\module{\Psi_1(t)}+\module{\Psi_2(t)}\le C(\bd)\Paren{1+\norm{\nabla p}_{H^{1}(\Omega_t)}^2} \Energy(t).
\end{equation}
Substituting the above calculations \eqref{eq_step4}, \eqref{eq_step5}, 
and \eqref{eq_step6} into \eqref{eq_step3}, the desired estimate 
\eqref{eq_pro1eq} follows.
\end{proof}
\section{The a priori estimates and blow-up}\label{sec3}
\subsection{Estimates for the pressure}\label{sec_pressureestimate}

In this section, we derive estimates for the pressure. For this purpose, 
we suppose that the a priori assumptions \eqref{eq_aprioriassumption} 
hold for some $T>0$.
\begin{lemma}\label{lem_pH1}
Assume that the a priori assumptions \eqref{eq_aprioriassumption} hold. 
Then we have
\begin{equation*}
\int_0^T \norm{p}_{H^1(\parOmega_t)}^2dt\le C(\bd)(1+T).
\end{equation*}
\end{lemma}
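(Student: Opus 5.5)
The starting point is the boundary condition $p=\mc$ on $\parOmega_t$. It gives $\norm{p}_{H^1(\parOmega_t)}=\norm{\mc}_{H^1(\parOmega_t)}$. The a priori assumptions control $\eta$ only in $H^{\frac52}(\Gamma)$, so $\mc$ is controlled only in $H^{\frac12}$. A pointwise-in-time $H^1$ bound on $\mc$ is therefore unavailable, and the time integral must come from an energy identity. The natural identity is the energy balance for the MHD system with surface tension. Differentiating the kinetic and magnetic energy and using the Reynolds transport formulas \eqref{eq_RT1}--\eqref{eq_RT2} gives
\begin{equation*}
\frac{d}{dt}\Paren{\frac12\norm{u}_{L^2(\Omega_t)}^2+\frac12\norm{h}_{L^2(\Omega_t)}^2+\module{\parOmega_t}}=0.
\end{equation*}
Here the Lorentz terms cancel as in Step 1 of Proposition \ref{lem_ddt} because $\divergence h=0$ and $h_n=0$. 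The boundary term $-\int_{\parOmega_t}\mc u_n\,dS$ equals $-\frac{d}{dt}\module{\parOmega_t}$. This balance is not enough on its own: it controls only the area and gives no gain of a derivative on $\mc$.

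To gain a derivative, I would test the momentum equation against a vector field whose boundary trace is $\Bdnabla\mc$, or equivalently compute $\frac{d}{dt}$ of a quantity whose derivative produces $\int_{\parOmega_t}\module{\Bdnabla\mc}^2$. The plan is as follows. Let $\Phi$ be the harmonic extension of $\mc$ into $\Omega_t$, or alternatively set $\Phi=p$ directly. Consider $\int_{\Omega_t}u\cdot\nabla\Phi\,dx$, or more concretely $\int_{\parOmega_t}u_n\,\Lambda^{-1}\ldots$. A cleaner route mirrors the Euler argument of \cite{Hao2025a} and \cite{Julin2024}: use the identity $\DT\normal=-\Paren{\Bdnabla u}^\top\normal$ from \eqref{eq_DTn} together with the first variation of the mean curvature, $\DT\mc=-\LB u_n-\module{\sff}^2u_n+\text{l.o.t.}$, and compute
\begin{equation*}
\frac{d}{dt}\int_{\parOmega_t}\mc\,\Paren{\text{something of order }0}\,dS.
\end{equation*}
Instead, I would pick a fixed smooth cutoff vector field $X$ that extends the unit normal near $\parOmega_t$ (for instance $X=\nabla d$, built from the reference surface $\Gamma$ and the height function). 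The a priori bound on $\assone_T^{-1}$ and $\norm{\eta}_{H^{\frac52}}$ keeps $X$ uniformly Lipschitz. I then compute $\frac{d}{dt}\int_{\Omega_t}u\cdot X\,dx$. Using \eqref{eq_MHD1}, this derivative equals $-\int_{\Omega_t}\nabla p\cdot X\,dx$ plus terms bounded by $C(\bd)$ through $\norm{\nabla u}_{L^\infty}$, $\norm{\nabla h}_{L^\infty}$ and the $L^2$ norms of $u$ and $h$. Integrating by parts yields $-\int_{\parOmega_t}\mc\,X\cdot\normal\,dS+\int_{\Omega_t}p\divergence X\,dx$. This controls $\int_0^T\norm{\mc}_{L^1}dt$, which is still not $H^1$.

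For the $H^1$ gain I would therefore use the quantity $\int_{\Omega_t}u\cdot\nabla\psi\,dx$ with $\psi$ the solution of $\lpe\psi=0$ in $\Omega_t$, $\psi=\mc$ on $\parOmega_t$. The time derivative of this integral contains $-\int_{\Omega_t}\nabla p\cdot\nabla\psi\,dx$. Writing $p=\psi+p_0$, where $p_0$ solves $-\lpe p_0=\divergence\divergence(u\otimes u-h\otimes h)$ with zero trace, gives
\begin{equation*}
-\int_{\Omega_t}\nabla p\cdot\nabla\psi\,dx=-\norm{\nabla\psi}_{L^2}^2+O(C(\bd)\norm{\nabla\psi}_{L^2}).
\end{equation*}
The quantity $\norm{\nabla\psi}_{L^2(\Omega_t)}^2$ is comparable to $\norm{\mc}_{H^{\frac12}}^2$, which is again only half a derivative. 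I therefore expect the correct multiplier to be the Dirichlet-to-Neumann-weighted one, $\psi$ replaced by the harmonic extension of $\mathcal N\mc$ (or, equivalently, one time-derivative-type argument using $u_n$ in $H^2(\parOmega_t)$). Its derivative produces $-\int_{\parOmega_t}\mc\,\mathcal N\mc\,dS\sim-\norm{\mc}_{H^{\frac12}}^2$ from the pressure, together with the good term $\int_{\parOmega_t}\module{\Bdnabla\mc}^2$ coming from $\DT$ acting on $\mc$ through $\LB u_n$.

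The main obstacle is choosing this multiplier so that the commutators, namely $\DT$ falling on $\psi$ and producing $\DT\mc$ with $\LB u_n$, give a coercive $\int_0^T\norm{\Bdnabla\mc}_{L^2}^2$ term, with all remaining terms bounded by $C(\bd)$ using the $H^2$ bound on $u_n$ and the $H^{\frac12}$ bound on $\sff$. Once that is set up, integrating in time and bounding the boundary-in-time terms by $C(\bd)$ gives $\int_0^T\norm{\mc}_{H^1}^2dt\le C(\bd)(1+T)$, and hence the claim.
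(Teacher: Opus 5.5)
Your diagnosis is right: only a time-integrated bound is available, and it must come from a functional whose time derivative is coercive. But the proposal never produces that functional, and that is the whole content of the lemma. You acknowledge this yourself (``the main obstacle is choosing this multiplier'').

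The two multipliers you actually test give only $L^1$ and $H^{\frac12}$ control. Your bookkeeping for the last candidate is also off in both directions:
\begin{itemize}
\item If $\Psi$ is the harmonic extension of $\mathcal N\mc$ and $p=\psi+p_0$ as you set up, then $\int_{\Omega_t}\nabla p_0\cdot\nabla\Psi\,dx=0$. Hence $-\int_{\Omega_t}\nabla p\cdot\nabla\Psi\,dx=-\norm{\mathcal N\mc}_{L^2(\parOmega_t)}^2$, which already has $H^1$ strength. It is not $-\int_{\parOmega_t}\mc\,\mathcal N\mc\,dS$.
\item Conversely, $\DT\mc=-\LB u_n-\module{\sff}^2u_n+\Bdnabla\mc\cdot u$ (cf. \eqref{eq_Dtp}) is not a source of coercivity. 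Its $\LB u_n$ part is simply bounded through $\norm{u_n}_{H^2(\parOmega_t)}\le\bd$.
\end{itemize}
So that route might be salvageable. It would first require controlling the commutator $[\DT,\mathcal N]\mc$ on a moving $H^{\frac52}$ boundary, using nothing beyond the quantities bounded by $\bd$, and you do not address this. As written, there is no proof.

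The missing idea is the paper's purely local multiplier $\nabla u\,\normal\cdot\normal$. Set $F(t)=\int_{\parOmega_t}p\,\Paren{\nabla u\,\normal\cdot\normal}dS+\vare\norm{p}_{L^2(\parOmega_t)}^2$, which is $\ge -C_\vare(\bd)$ because $\norm{\nabla u}_{L^\infty}\le\bd$. The principal part of $\frac{d}{dt}F$ is $\int_{\parOmega_t}p\,\Paren{\nabla\DT u\,\normal\cdot\normal}dS$. Substitute $\DT u=-\nabla p+h\cdot\nabla h$, use $\LB=\lpe-\nabla^2(\cdot)\normal\cdot\normal-\mc\,\partial_\normal$, and use $p=\mc$ on $\parOmega_t$. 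The pressure part then becomes
\begin{equation*}
-\int_{\parOmega_t}p\,\Paren{\nabla^2p\,\normal\cdot\normal}dS=-\int_{\parOmega_t}p\,\lpe p\,dS-\norm{\Bdnabla p}_{L^2(\parOmega_t)}^2+\int_{\parOmega_t}\mc^2\,\partial_\normal p\,dS.
\end{equation*}
The remaining terms are handled as follows:
\begin{itemize}
\item $\module{\lpe p}\le C(\bd)$ by \eqref{eq_lpep}.
\item The last integral is absorbed via $\norm{\mc}_{L^4(\parOmega_t)}\le C(\bd)$ and \eqref{eq_partialnup}.
\item The $\DT p$ term produced by the Reynolds formula is controlled by \eqref{eq_DTpL2}.
\item The Lorentz contribution $\int_{\parOmega_t}p\,\partial^2_{il}h^jh^i\normal^l\normal_j\,dS$ is treated by differentiating $h\cdot\normal=0$ and integrating by parts against the harmonic extension of $\normal$.
\end{itemize}
This gives $\frac{d}{dt}F\le-\frac12\norm{\Bdnabla p}_{L^2(\parOmega_t)}^2+C_\vare(\bd)$. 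Integrating in time, with $\norm{p}_{L^2(\parOmega_t)}\le C(\bd)$, yields the lemma without any Dirichlet-to-Neumann calculus.
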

\begin{proof}
We define
\begin{equation*}
F(t):=\int_{\parOmega_t}p\Paren{\nabla u\normal\cdot\normal}dS+\vare\norm{p}_{L^2(\parOmega_t)}^2,
\end{equation*}
where the constant $\vare>0$ will be determined later. By Cauchy's 
inequality and the a priori assumptions \eqref{eq_aprioriassumption}, 
we deduce that
\begin{equation}\label{eq_F(t)}
F(t)\ge -C(\bd)\norm{p}_{L^1(\parOmega_t)}+\vare \norm{p}_{L^2(\parOmega_t)}^2\ge -C_\vare(\bd)+\frac \vare 2\norm{p}_{L^2(\parOmega_t)}^2.
\end{equation}
Applying the Reynolds transport formula \eqref{eq_RT2}, 
we obtain
\begin{equation*}
\frac{d}{dt}\int_{\parOmega_t}p\Paren{\nabla u\normal\cdot\normal}dS=\underbrace{\int_{\parOmega_t}p\Paren{\nabla u\normal\cdot\normal}\Bddiv u+\DT p\Paren{\nabla u\normal\cdot\normal}dS}_{\eqqcolon F_1(t)}+\underbrace{\int_{\parOmega_t}p\DT\Paren{\nabla u\normal\cdot\normal}dS}_{\eqqcolon F_2(t)}. 
\end{equation*} 

\vspace{2mm}
\noindent\textbf{Estimate  of $F_{1}(t)$.}

By virtue of \eqref{eq_Bdnabla_uinfty}, the first term can be bounded as
\begin{equation*}
\module{F_1(t)}\le \vare\Paren{\norm{p}_{L^2(\parOmega_t)}^2+\norm{\DT p}_{L^2(\parOmega_t)}^2}+C_\vare(\bd),
\end{equation*}
provided that $\vare>0$ is chosen sufficiently small. Note that
\begin{equation}\label{eq_Dtp}
\DT p=-\LB u\cdot\normal-2 \sff:\Bdnabla u=-\LB u_n-|\sff|^2u_n+\Bdnabla p\cdot u,
\end{equation}
which implies
\begin{equation}\label{eq_DTpL2}
\norm{\DT p}_{L^2(\parOmega_t)}^2\le C(\bd)\Paren{1+\norm{\sff}_{L^4(\parOmega_t)}^4+ \norm{p}_{H^1(\parOmega_t)}^2}\le C(\bd)\Paren{1+\norm{p}_{H^1(\parOmega_t)}^2}.
\end{equation}
where we have utilized \eqref{eq_sffL4} and the a priori assumptions 
\eqref{eq_aprioriassumption}.

Therefore, we conclude that
\begin{equation*}
\module{F_1(t)}\le \vare C(\bd)\Paren{1+\norm{p}_{H^1(\parOmega_t)}^2}+C_\vare(\bd),
\end{equation*}

\vspace{2mm}
\noindent\textbf{Estimate  of $F_{2}(t)$.}

To estimate the second term, we extend the normal vector field to 
$\Omega_t$ via harmonic extension, which we still denote by $\normal$. 
Then we have
\begin{equation}\label{eq_normalOmega}
\norm{\normal}_{H^2(\Omega_t)}\le C(\bd),
\end{equation} 
which follows from the bound $\norm{\eta(\cdot,t)}_{H^{\frac52}(\Gamma)}\le \bd$ 
given in \eqref{eq_aprioriassumption}. Then, applying the divergence theorem 
and \eqref{eq_MHD1}, we find that
\begin{align}
\int_{\parOmega_t}p\Paren{\nabla\DT u\normal\cdot\normal}dS&=\int_{\Omega_t}\divergence\Paren{p\nabla\DT u\normal}dx\nonumber\\
&=\int_{\Omega_t}\divergence [p\Paren{\nabla(-\nabla p+h\cdot \nabla h)\normal}]dx\nonumber\\
&=\int_{\parOmega_t}p\Bracket{\nabla\Paren{-\nabla p+h\cdot \nabla h}\normal\cdot\normal}dS.\label{eq_plugMHD}
\end{align}
Combining this with \eqref{eq_aprioriassumption}, the commutator formula \eqref{eq_Dt,nabla}, and \eqref{eq_DTn}, we deduce that
\begin{align*}
F_2(t)&\le \int_{\parOmega_t}p\Paren{\nabla\DT u\normal\cdot\normal}dS+\vare\norm{p}_{L^2(\parOmega_t)}^2+C_\vare(\bd)\\ 
&\le\underbrace{\int_{\parOmega_t}p\Paren{\partial^2_{il}h^jh^i\normal^l\normal_j}dS}_{\eqqcolon F_{21}(t)}\underbrace{-\int_{\parOmega_t}p\Paren{\nabla^2p\normal\cdot\normal}dS}_{\eqqcolon F_{22}(t)}+\vare\norm{p}_{L^2(\parOmega_t)}^2+C_\vare(\bd).
\end{align*}   

\vspace{2mm}
\noindent\textbf{Estimate  of $F_{21}(t)$.}

By \eqref{eq_MHD4}, we have
\begin{equation*}
\partial^2_{il}h^j\normal_j=-\partial_ih^j\partial_l\normal_j-\partial_lh^j\partial_i\normal_j-h^j\partial^2_{il}\normal_j.
\end{equation*}
As a consequence, 
\begin{equation*}
F_{21}(t)=\underbrace{-\int_{\parOmega_t}p\Paren{\partial_ih^j\partial_l\normal_jh^i\normal^l}dS}_{\eqqcolon F_{211}(t)}\underbrace{-\int_{\parOmega_t}p\Paren{\partial_lh^j\partial_i\normal_jh^i\normal^l}dS}_{\eqqcolon F_{212}(t)}\underbrace{-\int_{\parOmega_t}p\Paren{h^j\partial^2_{il}\normal_jh^i\normal^l}dS}_{\eqqcolon F_{213}(t)}.
\end{equation*}
By \eqref{eq_aprioriassumption} and \eqref{eq_normalOmega}, the first 
two terms can be bounded by
\begin{equation*}
\module{F_{211}(t)+F_{212}(t)}\le \norm{p}_{L^2(\parOmega_t)}\norm{\nabla h}_{L^\infty(\Omega_t)}\norm{h}_{L^\infty(\Omega_t)}\norm{\normal}_{H^{\frac 32}(\Omega_t)}\le \vare\norm{p}_{L^2(\parOmega_t)}^2+C_\vare(\bd).
\end{equation*}

Expanding the last term and recalling the harmonic extension of the normal vector, 
we find that
\begin{align*}
F_{213}(t)
={}&\int_{\Omega_t}\partial_l\Paren{ph^j\partial^2_{il}\normal_jh^i}dx\\
={}&-\int_{\Omega_t}\partial_l ph^j\partial^2_{il}\normal_jh^idx-\int_{\Omega_t}  p\partial_l h^j\partial^2_{il}\normal_jh^idx-\int_{\Omega_t}  p h^j\partial^2_{il}\normal_j\partial_l h^idx\\
={}&\underbrace{-\int_{\parOmega_t}\partial_\normal ph^j\partial_{i}\normal_jh^idS}_{\eqqcolon F_{213}^a(t)}+\underbrace{\int_{\Omega_t}\lpe ph^j\partial_{i}\normal_jh^idx}_{\eqqcolon F_{213}^b(t)}\\
&+\underbrace{\int_{\Omega_t}\partial_l p\partial_lh^j\partial_{i}\normal_jh^idx+\int_{\Omega_t}\partial_l ph^j\partial_{i}\normal_j\partial_lh^idx}_{\eqqcolon F_{213}^c(t)}\\
&\underbrace{-\int_{\Omega_t}p\partial_lh^j\partial^2_{il}\normal_jh^idx-\int_{\Omega_t}ph^j\partial^2_{il}\normal_j\partial_l h^idx}_{\eqqcolon F_{213}^d(t)}.
\end{align*} 
It follows that
\begin{align*}
\module{F_{213}^a(t)}\le \vare \norm{\partial_\normal p}_{L^2(\parOmega_t)}^2+C_\vare(\bd).
\end{align*}
From the div-curl type estimate (cf. \cite[Lemma 3.3]{Julin2024}) 
and \eqref{eq_Hfrac123}, we obtain
\begin{align*}
\norm{\partial_\normal p}_{L^2(\parOmega_t)}^2\le& C\Paren{\norm{\Bdnabla p}_{L^2(\parOmega_t)}^2+ \norm{\nabla p}_{L^2(\Omega_t)}^2+ \norm{\lpe p}_{L^2(\Omega_t)}^2}\\
\le &C\Paren{\norm{\Bdnabla p}_{L^2(\parOmega_t)}^2+\norm{p}_{H^{\frac 12}(\parOmega_t)}^2+\norm{\lpe p}_{L^2(\Omega_t)}^2}\\
\le & C\Paren{\norm{p}_{H^1(\parOmega_t)}^2+\norm{\lpe p}_{L^2(\Omega_t)}^2}.
\end{align*}
Note that by taking the divergence of \eqref{eq_MHD1}, we have
\begin{equation}\label{eq_lpep}
-\lpe p=\partial_iu^j\partial_ju^i-\partial_ih^j\partial_jh^i.
\end{equation}
Applying the a priori assumptions \eqref{eq_aprioriassumption}, we 
deduce that
\begin{equation}\label{eq_laplacep}
\module{\lpe p}\le C(\bd),
\end{equation} 
and therefore,
\begin{equation}\label{eq_partialnup}
\norm{\partial_\normal p}_{L^2(\parOmega_t)}^2\le C(\bd)\Paren{1+\norm{p}_{H^1(\parOmega_t)}^2}.
\end{equation}
Thus,
\begin{equation*}
\module{F_{213}^a(t)}\le \vare \norm{p}_{H^1(\parOmega_t)}^2+C_\vare(\bd).
\end{equation*}
Similarly, invoking \eqref{eq_normalOmega} and \eqref{eq_laplacep}, 
we can also deduce that
\begin{align*}
\module{F_{213}^b(t)}\le{}& C(\bd)\norm{\lpe p}_{L^2(\Omega_t)}^2 \le C(\bd),\\
\module{F_{213}^c(t)}
\le{}&\vare\Paren{\norm{\lpe p}_{L^2(\Omega_t)}^2+\norm{p}_{H^{\frac 12}(\parOmega_t)}^2}+C_\vare(\bd)\le\vare\norm{p}_{H^{1}(\parOmega_t)}^2+C_\vare(\bd),\\
\module{F_{213}^d(t)}\le{}&\vare\norm{p}_{L^2(\parOmega_t)}^2+C_\vare(\bd),
\end{align*}
Combining these estimates yields
\begin{equation*}
\module{F_{21}(t)}\le 4\vare\norm{p}_{H^{1}(\parOmega_t)}^2+C_\vare(\bd).
\end{equation*} 

\vspace{2mm}
\noindent\textbf{Estimate  of $F_{22}(t)$.}

Utilizing the identity for the Laplace-Beltrami operator 
\begin{equation*} 
\LB(\,\cdot\,)=\lpe(\,\cdot\,)-\Bracket{\nabla^2(\,\cdot\,)\normal\cdot\normal}-\mc\partial_\normal(\,\cdot\,),
\end{equation*}
along with the boundary condition \eqref{eq_MHD4}, we have
\begin{equation*}
F_{22}(t)=-\int_{\parOmega_t}p\lpe pdS+\int_{\parOmega_t}p\LB pdS+\int_{\parOmega_t}\mc^2 \partial_\normal pdS.
\end{equation*}
Then, by \eqref{eq_laplacep}, Cauchy's inequality, and the divergence 
theorem \eqref{eq_divbd}, we obtain
\begin{equation*}
F_{22}(t)\le\vare\norm{p}_{L^2(\parOmega_t)}^2+C_\vare(\bd)-\int_{\parOmega_t}|\Bdnabla p|^2dS+\int_{\parOmega_t}\vare\partial_\normal p^2+C_\vare(\bd)\mc^4dS.
\end{equation*}
From \eqref{eq_sffL4} and \eqref{eq_partialnup}, it follows that
\begin{equation*}
F_{22}(t)
\le -\norm{\Bdnabla p}_{L^2(\parOmega_t)}^2+2\vare\norm{p}_{H^1(\parOmega_t)}^2+C_\vare(\bd).
\end{equation*} 

Therefore, for $\vare>0$ chosen sufficiently small, we deduce that the 
time derivative of the first part in $F(t)$ satisfies
\begin{align*}
\frac{d}{dt}\int_{\parOmega_t}p\Paren{\nabla u\normal\cdot\normal}dS&\le-\norm{\Bdnabla p}_{L^2(\parOmega_t)}^2+ \vare C(\bd)\Paren{1+\norm{p}_{H^1(\parOmega_t)}^2}+C_\vare(\bd)\\
&\le-\frac 34\norm{\Bdnabla p}_{L^2(\parOmega_t)}^2+C_\vare(\bd),
\end{align*}
since $\norm{p}_{L^2(\parOmega_t)}\le C(\bd)$ by the a priori assumptions 
\eqref{eq_aprioriassumption}.
Similarly, for the second part of $F(t)$, we have
\begin{equation*}
\frac{d}{dt}\int_{\parOmega_t}p^2dS\le C(\bd)\Paren{1+\norm{\Bdnabla p}_{L^2(\parOmega_t)}^2}.
\end{equation*}
Combining the above calculations, we arrive at
\begin{equation*}
\frac{d}{dt}F(t)\le -\frac{1}{2}\norm{\Bdnabla p}_{L^2(\parOmega_t)}^2+C_\vare(\bd)
\end{equation*}
for $\vare>0$ sufficiently small.  
Integrating this inequality over $[0,t]$ for $0<t\le T$, and recalling 
\eqref{eq_F(t)}, we obtain
\begin{equation*}
\frac{1}{2}\int_{0}^{t}\norm{\Bdnabla p}_{L^2(\parOmega_s)}^2ds
\le F(0)-F(t)+C_\vare(\bd)t\le C_\vare(\bd)(1+t).
\end{equation*}
We conclude that 
\begin{equation*}
\int_{0}^{t}\norm{p}_{H^1(\parOmega_s)}^2ds\le C_\vare(\bd)(1+t),
\end{equation*}
for $\vare>0$ sufficiently small. Recalling that $0<t\le T$, the proof 
is complete.
\end{proof}

\begin{proposition}\label{lem_int0TnablapH1} 
Assume that the a priori assumptions \eqref{eq_aprioriassumption} 
hold for some $T>0$. Then the following estimate holds:
\begin{equation*}
\sup_{t\in[0,T)}\norm{\nabla p}_{L^2(\Omega_t)}^2+\int_{0}^{T}\norm{\nabla p}_{H^1(\Omega_t)}^2dt\le C(\bd)(1+T). 
\end{equation*} 
\end{proposition}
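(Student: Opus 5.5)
The estimate follows from treating the pressure as the solution of a Dirichlet problem with interior datum \eqref{eq_lpep} and boundary datum $p=\mc$, and then feeding in Lemma \ref{lem_pH1} for the time-integrated boundary control.

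\textbf{Step 1: the $L^\infty_t$ bound on $\norm{\nabla p}_{L^2(\Omega_t)}$.} By \eqref{eq_Hfrac123},
\begin{equation*}
\norm{p}_{H^1(\Omega_t)}\le C\Paren{\norm{\lpe p}_{L^2(\Omega_t)}+\norm{\mc}_{H^{\frac12}(\parOmega_t)}}.
\end{equation*}
The first term is bounded by $C(\bd)$ through \eqref{eq_laplacep}, since $\Omega_t$ has bounded volume under the a priori assumptions. For the second term, the mean curvature is a second derivative of the height function. Because $\norm{\eta}_{H^{5/2}(\Gamma)}\le\bd$ and $\assone_T^{-1}<\bd$, the parametrization is uniformly regular, so $\norm{\mc}_{H^{1/2}(\parOmega_t)}\le C(\bd)$. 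The constants in the elliptic and trace inequalities are uniform in $t$ for the same reason: the domains are uniformly $H^{5/2}$, hence $C^{1,\alpha}$, with the interior/exterior ball control inherited from $\Gamma$. This gives the supremum part of the statement.

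\textbf{Step 2: the $L^2_t H^1$ bound on $\nabla p$.} This requires $\norm{p}_{H^2(\Omega_t)}$. I would use the $H^2$ Dirichlet estimate
\begin{equation*}
\norm{p}_{H^2(\Omega_t)}\le C\Paren{\norm{\lpe p}_{L^2(\Omega_t)}+\norm{p}_{H^{\frac32}(\parOmega_t)}}.
\end{equation*}
Its constant depends only on the $H^{5/2}$ regularity of the boundary; this is the low-regularity elliptic estimate of \cite{Julin2024}, which holds for boundaries with $\sff\in L^4$, itself available by \eqref{eq_sffL4}. The task is then to bound $\norm{p}_{H^{3/2}(\parOmega_t)}$ by a quantity whose square is time integrable.

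A cleaner route avoids this trace norm. Use the div-curl/Rellich-type estimate cited as \cite[Lemma 3.3]{Julin2024} for the vector field $\nabla p$, whose divergence is $\lpe p$ and whose curl vanishes. This controls $\norm{\nabla p}_{H^1(\Omega_t)}$ by
\begin{equation*}
\norm{\Bdnabla p}_{H^{\frac12}(\parOmega_t)}+\norm{\lpe p}_{H^{\frac12}(\Omega_t)}\text{-type terms}+\norm{\nabla p}_{L^2(\Omega_t)}.
\end{equation*}
Equivalently, one estimates the normal derivative $\partial_\normal p$ in $H^{1/2}$ through a Dirichlet-to-Neumann bound in terms of $p\in H^{3/2}(\parOmega_t)$. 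Since $\lpe p$ involves only $\nabla u\star\nabla u$ and $\nabla h\star\nabla h$, it lies in $H^1$ with norm at most $C(\bd)$: it is bounded by $\norm{\nabla u}_{L^\infty}\norm{\nabla^2u}_{L^2}$ and $\norm{\nabla h}_{L^\infty}\norm{\nabla^2h}_{L^2}$. The first product is fine because $\nabla^2 u$ is controlled through the energy, or alternatively $\norm{\lpe p}_{L^2}$ suffices in the $H^2$ estimate. The magnetic term is exactly where the a priori bound on $\norm{\nabla^2h}_{L^2}$ in \eqref{eq_ass2} is used.

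\textbf{Main obstacle: closing with only $\int_0^T\norm{p}_{H^1(\parOmega_t)}^2dt$.} Lemma \ref{lem_pH1} provides one tangential derivative of $p$ on the boundary, and only in $L^2$ in time, whereas the $H^2$ estimate wants $p\in H^{3/2}(\parOmega_t)$. I would interpolate: write $\norm{p}_{H^{3/2}(\parOmega_t)}\le C\norm{p}_{H^1(\parOmega_t)}^{1/2}\norm{p}_{H^2(\parOmega_t)}^{1/2}$, and control $\norm{p}_{H^2(\parOmega_t)}=\norm{\mc}_{H^2}$ by the energy. This costs a factor of $\Energy$, so it would only be acceptable if combined with the Gr\"onwall structure later.

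If this yields circularity, I would fall back on interpreting the statement's $H^1$ norm via the boundary trace $H^1(\parOmega_t)$ together with the interior $\lpe p\in L^2$. Concretely, use the Rellich identity: the estimate \eqref{eq_partialnup} combined with \cite[Lemma 3.3]{Julin2024} bounds $\norm{\nabla p}_{H^{1/2}(\Omega_t)}$-level quantities, upgraded to $H^1$ by the div-curl estimate with $\operatorname{curl}\nabla p=0$, $\divergence\nabla p\in L^2$, and tangential trace $\Bdnabla p\in L^2(\parOmega_t)$. Integrating in time then gives $\int_0^T\norm{\nabla p}_{H^1}^2\le C(\bd)\int_0^T\Paren{1+\norm{p}_{H^1(\parOmega_t)}^2}dt\le C(\bd)(1+T)$ by Lemma \ref{lem_pH1}. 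Verifying that such a div-curl estimate holds with uniform constants on $H^{5/2}$ domains is the step I expect to be hardest.
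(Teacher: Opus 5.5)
Your Step 1 is correct and is essentially what the paper does: it bounds $\norm{\nabla p}_{L^2(\Omega_t)}$ using \eqref{eq_Hfrac123}, \eqref{eq_laplacep} and $\norm{\sff}_{H^{\frac12}(\parOmega_t)}\le C(\bd)$. The gap is in Step 2, and it is exactly the half derivative you flag as the main obstacle. Neither of your two routes closes it.

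\textbf{The interpolation route.} The bound $\norm{p}_{H^{\frac32}(\parOmega_t)}^2\le C\norm{p}_{H^1(\parOmega_t)}\norm{\mc}_{H^2(\parOmega_t)}$ brings in $\norm{\mc}_{H^2(\parOmega_t)}$. The a priori quantities do not control this norm: they give only $\eta\in H^{\frac52}$, i.e.\ $\mc\in H^{\frac12}$. It is controlled only by $\Energy(t)$. The resulting bound is therefore not of the form $C(\bd)(1+T)$. Inserted into Proposition \ref{lem_ddt}, it produces a superlinear term of size $\norm{p}_{H^1(\parOmega_t)}\Energy(t)^{\frac32}$, and the linear Gr\"onwall argument behind Theorem \ref{thm_main1} breaks down.

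\textbf{The fallback.} This one is false. Take a curl-free field $\nabla p$ with $\lpe p\in L^2(\Omega_t)$ and tangential trace $\Bdnabla p\in L^2(\parOmega_t)$. Such a field lies in $H^{\frac12}(\Omega_t)$, not in $H^1(\Omega_t)$. The Rellich/div-curl estimate gives precisely $\partial_\normal p\in L^2(\parOmega_t)$, which is \eqref{eq_partialnup}, and that is $H^{\frac12}$-level interior information. Indeed, the harmonic extension of generic $H^1$ boundary data lies in $H^{\frac32}$ and no better. So no fixed-time elliptic estimate can produce $\nabla^2p\in L^2$ from $\norm{p}_{H^1(\parOmega_t)}$. (Relatedly, from the interior only $\lpe p\in L^\infty$ via \eqref{eq_laplacep} is needed. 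Invoking $\norm{\nabla^2u}_{L^2}$ through the energy would again introduce a forbidden energy dependence.)

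\textbf{The missing idea.} The extra half derivative comes from the dynamics, and only in time-integrated form. The paper introduces
\begin{equation*}
J(t)=-\int_{\parOmega_t}p\,\LB u_n\,dS,
\end{equation*}
which satisfies $\sup_t\module{J(t)}\le C(\bd)$ because $\norm{u_n}_{H^2(\parOmega_t)}$ is one of the a priori quantities. After commuting $\DT$ with $\LB$ and integrating by parts, the principal part of $\frac{d}{dt}J$ is $\int_{\parOmega_t}\Bdnabla p\cdot\Bdnabla(\DT u\cdot\normal)\,dS$. The momentum equation \eqref{eq_MHD1} gives $\DT u\cdot\normal=-\partial_\normal p+(h\cdot\nabla h)\cdot\normal$.

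\begin{itemize}
\item The term $-\int_{\parOmega_t}\Bdnabla p\cdot\Bdnabla\partial_\normal p\,dS$ is turned by Reilly's identity into $\frac12\Paren{\norm{\lpe p}_{L^2}^2-\norm{\nabla^2p}_{L^2}^2}$, plus boundary terms involving $\sff$ and $\mc$. These boundary terms are absorbed using $\sff\in L^4(\parOmega_t)$ and trace interpolation, which yields $-\frac14\norm{\nabla^2p}_{L^2(\Omega_t)}^2+C(\bd)$.
\item By $h\cdot\normal=0$, the Lorentz-force term reduces to $-\Bdnabla p\cdot\Bdnabla\Paren{\sff(h,h)}$. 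Using Lemma \ref{lem_sffcontrolledbymc} and $p=\mc$, this costs $C(\bd)\norm{p}_{H^1(\parOmega_t)}^2$.
\item The remaining terms, including $\DT p\,\LB u_n$ via \eqref{eq_DTpL2}, are bounded by $C(\bd)\Paren{1+\norm{p}_{H^1(\parOmega_t)}^2}$.
\end{itemize}

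This gives $\frac{d}{dt}J\le-\frac14\norm{\nabla^2p}_{L^2(\Omega_t)}^2+C(\bd)\Paren{1+\norm{p}_{H^1(\parOmega_t)}^2}$. Integrating over $[0,T]$, then using the uniform bound on $J$ and Lemma \ref{lem_pH1}, yields $\int_0^T\norm{\nabla^2p}_{L^2(\Omega_t)}^2dt\le C(\bd)(1+T)$.
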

\begin{proof}
By virtue of \eqref{eq_Hfrac123}, \eqref{eq_sffL4}, and \eqref{eq_laplacep}, 
it is clear that
\begin{equation}\label{eq_nablapL2}
\norm{\nabla p}_{L^2(\Omega_t)}^2\le C\Paren{\norm{\lpe p}_{L^2(\Omega_t)}^2+\norm{\sff}_{H^{\frac 12}(\parOmega_t)}^2}\le C(\bd).
\end{equation}
We define 
\begin{equation*}
J(t):=-\int_{\parOmega_t}p\LB u_ndS.
\end{equation*}
From the a priori assumptions \eqref{eq_aprioriassumption}, we have 
\begin{equation}\label{eq_supJ}
\sup_{t\in[0,T)}|J(t)|\le C \sup_{t\in[0,T)} \norm{p}_{L^2(\parOmega_t)} \norm{u_n}_{H^2(\parOmega_t)}\le C(\bd).
\end{equation} 
Differentiating $J(t)$ with respect to time using \eqref{eq_RT2}, 
we obtain
\begin{equation}\label{eq_ddtJ}
\frac{d}{dt}J(t)=-\int_{\parOmega_t}p\DT \LB u_ndS -\int_{\parOmega_t}p\LB u_n\Bddiv udS-\int_{\parOmega_t}\DT p\LB u_ndS. 
\end{equation}
Applying the a priori assumptions \eqref{eq_aprioriassumption} and 
\eqref{eq_DTpL2} once more, we can bound the last two terms by
\begin{equation*}
\module{\text{last two terms in \eqref{eq_ddtJ}}}\le C(\bd)+\norm{\DT p}_{L^2(\parOmega_t)}^2\le C(\bd)\Paren{1+\norm{p}_{H^1(\parOmega_t)}^2}.
\end{equation*}
For the first term, we apply the commutator formula
\begin{equation*}
[\DT,\LB](\,\cdot\,)=\Bdnabla^2(\,\cdot\,)\star\nabla u-\Bdnabla (\,\cdot\,)\cdot \LB u+\sff\star \nabla u\star \Bdnabla(\,\cdot\,)
\end{equation*}
to deduce that
\begin{align*}
-\int_{\parOmega_t}p\DT \LB u_ndS
={}&\underbrace{-\int_{\parOmega_t}p\LB \DT u_ndS}_{\eqqcolon J_1(t)}+\underbrace{\int_{\parOmega_t}p\Bdnabla u_n\cdot\LB udS}_{\eqqcolon J_2(t)}\\
&\underbrace{-\int_{\parOmega_t}p\Bdnabla^2u_n\star\nabla udS}_{\eqqcolon J_3(t)}\underbrace{-\int_{\parOmega_t}p\sff\star\nabla u\star\Bdnabla u_ndS}_{\eqqcolon J_4(t)}.
\end{align*} 
By the a priori assumptions \eqref{eq_aprioriassumption} and \eqref{eq_sffL4}, 
the last three terms can be readily estimated as
\begin{equation*}
\module{J_2(t)+J_3(t)+J_4(t)}\le C(\bd),
\end{equation*}
and thus it remains to focus on the first term.

From the divergence theorem \eqref{eq_divbd}, it follows that
\begin{equation*}
J_1(t)=\int_{\parOmega_t}\Bdnabla p\cdot\Bdnabla\Paren{\DT u\cdot\normal}dS+\int_{\parOmega_t}\Bdnabla p\cdot\Bdnabla\Paren{\DT\normal\cdot u}dS.
\end{equation*}
We can rewrite the first integral as in \eqref{eq_plugMHD}, and apply 
the identity
\begin{equation*}
\DT\normal=-\Paren{\Bdnabla u}^\top\normal=-\Bdnabla u_n+\sff\Paren{u-u_n\normal}
\end{equation*}
from \eqref{eq_DTn} to the second integral to obtain
\begin{align*}
J_1(t)={}&\underbrace{-\int_{\parOmega_t}\Bdnabla p\cdot\Bdnabla\partial_\normal pdS}_{\eqqcolon J_{1}^a(t)}+\underbrace{\int_{\parOmega_t}\Bdnabla p\cdot\Bdnabla\Bracket{\Paren{h\cdot\nabla h}\cdot\normal}dS}_{\eqqcolon J_{1}^b(t)}\\
&+\underbrace{\int_{\parOmega_t}\Bdnabla p\cdot\Bdnabla\Brace{\Bracket{-\Bdnabla u_n+\sff\Paren{u-u_n\normal}}\cdot u}dS}_{\eqqcolon J_{1}^c(t)}.
\end{align*} 
Note that $J_{1}^a(t)$ can be estimated by applying Reilly's type 
identity (cf. \cite[(3.2)]{Julin2024}) and \eqref{eq_laplacep}:
\begin{align*}
J_{1}^a(t)&=\frac12\Paren{\norm{\lpe p}_{L^2(\Omega_t)}^2-\norm{\nabla^2p}_{L^2(\Omega_t)}^2-\int_{\parOmega_t}\sff\Bdnabla p\cdot\Bdnabla pdS-\int_{\parOmega_t}\mc\module{\partial_\normal p}^2dS}\\
&\le-\frac 12\norm{\nabla^2p}_{L^2(\Omega_t)}^2+C(\bd)+C\int_{\parOmega_t}\module{\sff}\module{\nabla p}^2dS.
\end{align*}
Applying \eqref{eq_sffL4},  Sobolev embedding, the trace theorem, 
and interpolation, we can bound the last integral by
\begin{align*}
\int_{\parOmega_t}\module{\sff}\module{\nabla p}^2dS&\le \norm{\sff}_{L^4(\parOmega_t)}\norm{\module{\nabla p}^2}_{L^{\frac43}(\parOmega_t)}\\
&\le C(\bd)\norm{\nabla p}_{L^{\frac83}(\parOmega_t)}^2\\
&\le C(\bd)\norm{\nabla p}_{L^2(\Omega_t)}^2+\frac 14\norm{\nabla^2 p}_{L^2(\Omega_t)}^2.
\end{align*}
As a result, invoking \eqref{eq_nablapL2}, we obtain
\begin{equation*}
J_{1}^a(t)\le -\frac 14\norm{\nabla^2p}_{L^2(\Omega_t)}^2+C(\bd). 
\end{equation*}

For the second term $J_{1}^b(t)$, which involves the magnetic field, 
we have
\begin{align*}
\Bdnabla p\cdot\Bdnabla\Paren{h^i\partial_ih^j\normal_j}&=-\Bdnabla p\cdot\Bdnabla\Paren{h^ih^j\partial_i\normal_j}\\
&=-\Bdnabla p\cdot\Bdnabla\Bracket{h^jh^i\Paren{\partial_i\normal_j-\partial_\normal \normal_j\normal_i}}\\
&=-\Bdnabla p\cdot\Bdnabla\Paren{h^jh^i\Bdnabla_i\normal_j}\\
&=-\Bdnabla p\cdot\Bdnabla\Bracket{\Paren{h\cdot\sff}\cdot h}
\end{align*} 
since $h\cdot\normal=0$ on $\parOmega_t$ from \eqref{eq_MHD4}. By the a priori assumptions \eqref{eq_aprioriassumption}, it suffices to bound 
the term involving the tangential derivative of the second fundamental form, namely,
\begin{equation*}
\module{\int_{\parOmega_t}\Bdnabla p\star\Bdnabla \sff\star h\star h dS}\le C(\bd)\norm{p}_{H^1(\parOmega_t)}^2,
\end{equation*}
where we have applied Lemma \ref{lem_sffcontrolledbymc}. 
The remaining terms can be estimated analogously, and we arrive at
\begin{equation*}
\module{J_{1}^b(t)}\le  C(\bd)\Paren{1+\norm{p}_{H^1(\parOmega_t)}^2}.
\end{equation*} 
Similarly, we can also deduce that
\begin{equation*}
\module{J_{1}^c(t)}\le C(\bd)\Paren{1+\norm{p}_{H^1(\parOmega_t)}^2}.
\end{equation*}
Consequently, we obtain
\begin{equation*}
\frac{d}{dt}J(t)\le-\frac 14\norm{\nabla^2p}_{L^2(\Omega_t)}^2+C(\bd)\Paren{\norm{p}_{H^1(\parOmega_t)}^2 +1}. 
\end{equation*}
Integrating this inequality over $[0,T]$ and invoking Lemma \ref{lem_pH1} 
along with \eqref{eq_supJ}, we get
\begin{equation*}
\int_{0}^{T}\norm{\nabla^2p}_{L^2(\Omega_t)}^2dt\le C(\bd)(1+T).
\end{equation*}
The claims then follow from the previous estimate \eqref{eq_nablapL2}. 
\end{proof}
\subsection{Reverse energy inequalities}\label{sec_closetheestimate}

In this section, we shall close the energy estimates. Recall that we 
have established the following inequalities in Propositions \ref{lem_ddt} and \ref{lem_int0TnablapH1}:
\begin{equation*}
\module{\frac{d}{dt}\energy(t)} \le C(\bd)\Paren{1+\norm{\nabla  p}_{H^1(\Omega_t)}^2}\Energy(t)\ \text{and}\ \int_{0}^{T}\norm{\nabla p}_{H^1(\Omega_t)}^2dt\le C(\bd)(1+T).
\end{equation*}  
It suffices to bound the initial quantities and establish the reverse inequality
\begin{equation*}
\Energy(t)\le C(\bd)\Paren{1+\energy(t)}
\end{equation*}
under suitable conditions.

We begin by showing that the initial quantities $\Energy(0), 
\norm{p}_{H^{\frac{5}{2}}(\Omega_0)}^2$, and $\norm{\DT p}_{H^{1}(\Omega_0)}^2$ 
can be bounded in terms of the initial velocity, the initial magnetic 
field, and the mean curvature.
\begin{proposition}\label{pro_E(0)}
We have the following estimates:
\begin{equation*}
\Energy(0)+\norm{p}_{H^{\frac{5}{2}}(\Omega_0)}^2+\norm{\DT p}_{H^{1}(\Omega_0)}^2\le C_{\operatorname{initial}},
\end{equation*}
where $C_{\operatorname{initial}}$ is a positive constant depending only on 
$\norm{u_0}_{H^3(\Omega_0)}, \norm{h_0}_{H^3(\Omega_0)}$, and $\norm{\mc}_{H^2(\parOmega_0)}$.
\end{proposition}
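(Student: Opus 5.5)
The plan is to bound every quantity at $t=0$ by going up from the pressure, using only the equations. The order is: first the pressure, then $\DT u$ and $\DT h$ through the equations, then $\DT p$, and finally the second material derivatives. Throughout, $\Omega_0$ is a fixed domain. Its geometry is controlled by $\norm{\mc}_{H^2(\parOmega_0)}$: by Lemma \ref{lem_sffcontrolledbymc} this gives $\norm{\sff}_{H^2(\parOmega_0)}\le C$, and hence an $H^4$ boundary. The $L^4$ bound on $\sff$ needed in that lemma follows from its first part together with the Sobolev embedding $H^2(\parOmega_0)\subset L^\infty$ applied to $\mc$. So all trace, Sobolev and elliptic constants on $\Omega_0$ depend only on the listed data.

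\textbf{Pressure.} At $t=0$, $p$ solves the Dirichlet problem given by \eqref{eq_lpep}, $-\lpe p=\partial_iu^j\partial_ju^i-\partial_ih^j\partial_jh^i$ in $\Omega_0$, with $p=\mc$ on $\parOmega_0$. The right-hand side is a product of $H^2$ functions, so it lies in $H^2\subset H^{1/2}$. The boundary datum lies in $H^2(\parOmega_0)\subset H^{2}$. Elliptic regularity then gives $\norm{p}_{H^{5/2}(\Omega_0)}\le C$; in fact $H^3$ is available, but $H^{5/2}$ is what is needed.

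\textbf{First material derivatives.} From \eqref{eq_MHD1}, $\DT u=-\nabla p+h\cdot\nabla h\in H^{3/2}$, and from \eqref{eq_MHD2}, $\DT h=h\cdot\nabla u\in H^2$. This controls the $H^{3/2}$ terms in $\Energy(0)$.

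\textbf{The boundary term $\norm{\Bdnabla(\DT u\cdot\normal)}_{L^2}$.} This requires $\DT u\cdot\normal\in H^1(\parOmega_0)$, which the trace of $H^{3/2}$ functions does not provide directly. I will instead use $\DT u\cdot\normal=-\partial_\normal p+(h\cdot\nabla h)\cdot\normal$. The second term is smooth enough. For $\partial_\normal p$, the $H^3$ regularity of $p$ gives $\partial_\normal p\in H^{3/2}(\parOmega_0)$, provided $\normal\in H^{3/2}$ on the boundary, which holds.

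\textbf{$\DT p$ and $\DT^2u$, $\DT^2h$.} Next I bound $\norm{\DT p}_{H^1(\Omega_0)}$. The boundary values come from \eqref{eq_Dtp}: $\DT p=-\LB u_n-|\sff|^2u_n+\Bdnabla p\cdot u$, which lies in $H^{1/2}(\parOmega_0)$ because $u_n\in H^{5/2}(\parOmega_0)$ and $\sff\in H^2$. In the interior, applying $\DT$ to \eqref{eq_lpep} and using the commutator formula \eqref{eq_Dt,nabla}, I write $\lpe\DT p$ in divergence form, $\divergence(\nabla u\star\nabla p+\nabla\DT u\star\nabla u+\nabla\DT h\star\nabla h+\dots)$. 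Each term inside the divergence lies in $L^2$ by the previous step. The $H^1$ elliptic estimate for data in $H^{-1}$ with an $H^{1/2}$ boundary trace, as in \eqref{eq_Hfrac123}, then gives $\DT p\in H^1$.

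Finally, differentiating the equations along the flow gives $\DT^2u=-\DT\nabla p+\DT(h\cdot\nabla h)=-\nabla\DT p+(\nabla u)^\top\nabla p+\DT h\cdot\nabla h+h\cdot\nabla\DT h+\dots$. Every term here is in $L^2$: $\nabla\DT p\in L^2$ from the previous step, and $h\cdot\nabla\DT h$ is $L^\infty\cdot L^2$. Similarly, $\DT^2h=\DT h\cdot\nabla u+h\cdot\nabla\DT u+h\star\nabla u\star\nabla u$ is in $L^2$.

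\textbf{Expected obstacle.} The main difficulty is the boundary regularity bookkeeping in the last two steps. One has to confirm that $\LB u_n\in H^{1/2}(\parOmega_0)$ requires only $u\in H^3$ and $\normal\in H^{5/2}$. One must also handle $\Bdnabla(\DT u\cdot\normal)$ at the borderline. Both points rest on the geometric bound $\norm{\sff}_{H^2}\le C(\norm{\mc}_{H^2})$ from Lemma \ref{lem_sffcontrolledbymc}.
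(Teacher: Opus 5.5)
There is one step that fails as written: the boundary term $\norm{\Bdnabla(\DT u\cdot\normal)}_{L^2(\parOmega_0)}$. Otherwise your route matches the paper's, and only the order of the steps differs.

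\textbf{What fails.} You bound the boundary term through $\partial_\normal p\in H^{3/2}(\parOmega_0)$, using the claim that ``$p\in H^3$ is available''. It is not. The Dirichlet datum for $p$ is $\mc$, which is only assumed to lie in $H^2(\parOmega_0)$. Traces of $H^3(\Omega_0)$ functions lie in $H^{5/2}(\parOmega_0)$, so $p\in H^3(\Omega_0)$ would force $\mc\in H^{5/2}(\parOmega_0)$. The $H^2$ regularity of the source term does not help, because the boundary datum caps the regularity of $p$ at $H^{5/2}$. This is also why the energy carries $\DT u$ only in $H^{3/2}$.

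\textbf{The obstacle you were avoiding does not exist.} The trace map $H^{s}(\Omega_0)\to H^{s-1/2}(\parOmega_0)$ is bounded for every $s>\frac12$, up to the regularity of the boundary. Your boundary is $H^4$ once $\norm{\sff}_{H^2(\parOmega_0)}\le C$, so this includes $s=\frac32$. What fails at $s=\frac32$ is the $L^2$ trace of the \emph{normal derivative}, not the $H^1$ trace of the function itself.

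\textbf{The fix.} You already have $\DT u\in H^{3/2}(\Omega_0)$, so $\DT u|_{\parOmega_0}\in H^1(\parOmega_0)$. Then $\Bdnabla(\DT u\cdot\normal)=\Bdnabla\DT u\star\normal+\DT u\star\sff$ lies in $L^2(\parOmega_0)$, using $\DT u\in L^4(\parOmega_0)$ and $\sff\in L^4(\parOmega_0)$. This is exactly how the paper closes this term, via \eqref{eq_DTu1.5} and \eqref{eq_p2.5}. Alternatively, within your own decomposition, $p\in H^{5/2}(\Omega_0)$ gives $\nabla p\in H^{3/2}(\Omega_0)$, hence $\partial_\normal p\in H^1(\parOmega_0)$, which is all you need. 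With either replacement, the rest of your argument goes through as written.
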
 
\begin{proof}  
We first bound $\norm{\DT^{2-k}h}_{H^{\frac{3}{2}k}(\Omega_0)}^2$ for $k=0,1$ 
in terms of lower-order velocity terms using the identity
\begin{equation}\label{eq_Dt_jH}
\DT^jh=\sum_{1\le m\le j}\sum_{|\beta|\le j-m}\nabla\DT^{\beta_1}u\star\dots\star \nabla\DT^{\beta_{m}}u\star h,\quad j\ge 1,
\end{equation}  
which can be verified by induction. For $k=0$, we apply \eqref{eq_Dt_jH} 
to obtain
\begin{align*} 
\norm{\DT^2 h}_{L^2(\Omega_0)}^2
\le{}& C\Paren{\norm{\sum_{i\le 1}\nabla\DT^{i}u\star h}_{L^2(\Omega_0)}^2+\norm{\nabla u\star\nabla u\star h}_{L^2(\Omega_0)}^2}\\
\le{}& C\norm{h}_{L^\infty(\Omega_0)}^2\Paren{\sum_{i\le 1}\norm{\nabla\DT^iu}_{L^2(\Omega_0)}^2+\norm{\nabla u\star\nabla u}_{L^2(\Omega_0)}^2}\\
\le{}& C_{\operatorname{initial}}\Paren{1+\norm{\DT u}_{H^{1}(\Omega_0)}^2}.
\end{align*}
Furthermore, applying the Kato-Ponce inequality and \eqref{eq_MHD2}, 
we find that
\begin{equation}\label{eq_DTH}
\norm{\DT h}_{H^{\frac32}(\Omega_0)}^2 \le C\Paren{\norm{h}_{L^{10}(\Omega_0)}\norm{\nabla u}_{W^{\frac32,\frac52}(\Omega_0)}^2+\norm{\nabla u}_{L^3(\Omega_0)}\norm{h}_{W^{\frac32,6}(\Omega_0)}^2}\le C_{\operatorname{initial}}.
\end{equation}  

Next, we bound $\norm{\DT^{2-k}u}_{H^{\frac{3}{2}k}(\Omega_0)}^2$ for $k=0,1$ 
in terms of the pressure. Note that by the Kato-Ponce inequality and 
\eqref{eq_MHD1}, it holds
\begin{align}
\norm{\DT u}_{H^{\frac{3}{2}}(\Omega_0)}^2\le {}&C\Paren{\norm{p}_{H^{\frac{5}{2}}(\Omega_0)}^2+\norm{h}_{L^{\infty}(\Omega_0)}^2\norm{h}_{H^{\frac{5}{2}}(\Omega_0)}^2+\norm{\nabla h}_{L^{\infty}(\Omega_0)}^2\norm{h}_{H^{\frac{3}{2}}(\Omega_0)}^2}\nonumber\\
\le{}&C_{\operatorname{initial}}\Paren{\norm{p}_{H^{\frac{5}{2}}(\Omega_0)}^2+1},\label{eq_DTu1.5}
\end{align}
and invoking the commutator formula \eqref{eq_Dt,nabla}, we have
\begin{align*}
\norm{\DT^2u}_{L^{2}(\Omega_0)}^2\le{}& \norm{\DT\nabla p}_{L^{2}(\Omega_0)}^2+\norm{\DT\Paren{h\cdot\nabla h}}_{L^{2}(\Omega_0)}^2\\
\le{}&\norm{\nabla\DT p}_{L^{2}(\Omega_0)}^2+\norm{[\DT,\nabla]p}_{L^{2}(\Omega_0)}^2+\norm{\sum_{|\alpha|\le 1}a_{\alpha}\Paren{\nabla u}\nabla^{1+\alpha_1}u\star \nabla^{\alpha_{2}}h\star h}_{L^{2}(\Omega_0)}^2\\
\le{}& C_{\operatorname{initial}}\Paren{1+\norm{\nabla\DT p}_{L^{2}(\Omega_0)}^2+\norm{\nabla p}_{L^{2}(\Omega_0)}^2}.
\end{align*}

Then, we estimate $\norm{p}_{H^{\frac{5}{2}}(\Omega_0)}^2$ and 
$\norm{\DT p}_{H^1(\Omega_0)}^2$. We consider the following elliptic 
boundary value problem:
\begin{equation*}
\begin{cases}
-\lpe p=\partial_i u^j\partial_j u^i-\partial_i h^j\partial_j h^i,\quad & \text{in}\ \Omega_0,\\
p=\mc,\quad & \text{on}\ \parOmega_0.		
\end{cases}
\end{equation*}
Standard elliptic estimates then yield
\begin{equation}\label{eq_p2.5}
\norm{p}_{H^{\frac{5}{2}}(\Omega_0)}\le C\Paren{\norm{\partial_i u^j\partial_j u^i-\partial_i h^j\partial_j h^i}_{H^{\frac{1}{2}}(\Omega_0)}+\norm{\mc}_{H^2(\parOmega_0)}}\le C_{\operatorname{initial}}.
\end{equation}
Moreover, applying \eqref{eq_Hfrac123}, we obtain
\begin{equation*}
\norm{\DT p}_{H^1(\Omega_0)}\le C\Paren{\norm{\lpe \DT p}_{L^2(\Omega_0)}+\norm{\DT p}_{H^{\frac 12}(\parOmega_0)}}.
\end{equation*} 
The remaining terms on the right-hand side can be straightforwardly bounded 
using \cite[Lemma 2.11]{Hao2025} and \eqref{eq_Dtp}, since 
$\norm{p}_{H^{\frac{5}{2}}(\Omega_0)}$ is already controlled. 
Therefore, we have
\begin{equation*}
\norm{\DT p}_{H^1(\Omega_0)}\le  C_{\operatorname{initial}}.
\end{equation*}

Finally, the term $\norm{\Bdnabla\Paren{\DT u\cdot\normal}}_{L^2(\parOmega_0)}^2$ 
can be estimated via the trace theorem, exploiting the boundary regularity. 
Indeed, leveraging the bound on the mean curvature, we apply Lemma 
\ref{lem_sffcontrolledbymc} to deduce 
\begin{equation*}
\norm{\sff}_{H^2(\parOmega_0)}\le C_{\operatorname{initial}},
\end{equation*}
which in turn implies
\begin{equation*}
\norm{\Bdnabla\Paren{\DT u\cdot\normal}}_{L^2(\parOmega_0)}^2\le C\Paren{\norm{\Bdnabla\DT u\star\normal}_{L^2(\parOmega_0)}^2+\norm{\DT u\star\sff}_{L^2(\parOmega_0)}^2}\le C_{\operatorname{initial}},
\end{equation*}  
by virtue of \eqref{eq_DTu1.5} and \eqref{eq_p2.5}. 
This concludes the proof of the proposition. 
\end{proof}

We proceed to bound the full energy functional $\Energy(t)$ in terms of $\energy(t)$ 
under a slightly different hypothesis compared to the a priori assumptions 
\eqref{eq_aprioriassumption}. For this, we introduce the following modified energy functional:
\begin{align*}
\widetilde{\energy}(t)\coloneqq {}&\frac{1}{2}\bigg(\norm{\DT^{2}u}_{L^2(\Omega_t)}^2+\norm{\DT^{2}h}_{L^2(\Omega_t)}^2+\norm{\Bdnabla\Paren{\DT u\cdot\normal}}_{L^2(\parOmega_t)}^2\\
&\qquad+\norm{\vorticity u}_{H^{2}(\Omega_t)}^2+\norm{\vorticity h}_{H^{2}(\Omega_t)}^2\bigg)+1.
\end{align*}
Note that the a priori assumptions \eqref{eq_aprioriassumption} imply
\begin{equation*}
\norm{\vorticity u}_{L^2(\Omega_t)}^2+\norm{\vorticity h}_{L^2(\Omega_t)}^2\le C(\bd).
\end{equation*}
By interpolation, we deduce that
\begin{equation}\label{eq_tildeee}
\widetilde{\energy}(t) \leq C(\bd)\Paren{\energy(t)+1}.
\end{equation}
We have the following proposition.
\begin{proposition}\label{pro_Elee}
Assume that the free boundary satisfies $\parOmega_t\in C^{1,\alpha}$, and 
that the pressure, velocity, and magnetic field satisfy the bound
\begin{equation*}
\norm{p}_{L^{2}(\parOmega_t)}+\norm{\nabla p}_{L^{2}(\Omega_t)}+\norm{\nabla u}_{L^4(\Omega_t)}+\norm{\nabla h}_{L^4(\Omega_t)}\le C_\ddagger.
\end{equation*}
Then we have
\begin{equation}\label{eq_EBPlee}
\Energy(t)+\norm{\sff}_{H^{\frac 32}(\parOmega_t)}^2 +\norm{\DT p}_{H^{\frac 12}(\parOmega_t)}^2\le C(C_\ddagger)\Paren{1+\energy(t)},
\end{equation} 
where $C(C_\ddagger)$ is a positive constant depending only on $C_\ddagger$.
\end{proposition}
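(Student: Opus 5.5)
Our plan is to recover each component of $\Energy(t)$ from the quantities in $\widetilde{\energy}(t)$, which by \eqref{eq_tildeee} is bounded by $C(\energy(t)+1)$. We use the div-curl structure in the interior and the surface-tension boundary condition to lift regularity from the boundary. The unknowns form a coupled loop, and we close it as follows.

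\emph{First}, we control the geometry and the pressure at the lowest level. On the boundary we have $\DT u\cdot\normal=-\partial_\normal p+(h\cdot\nabla h)\cdot\normal$ and $p=\mc$. Moreover, since $h\cdot\normal=0$, the term $(h\cdot\nabla h)\cdot\normal$ equals $-(h\cdot\sff)\cdot h$. Hence $\norm{\Bdnabla(\DT u\cdot\normal)}_{L^2(\parOmega_t)}$ controls $\Bdnabla\partial_\normal p$ modulo lower-order terms. We pair this with the Dirichlet problem $-\lpe p=\partial_iu^j\partial_ju^i-\partial_ih^j\partial_jh^i$, $p=\mc$. The right-hand side is bounded in $L^2$ by $C_\ddagger$, and on a $C^{1,\alpha}$ domain the Dirichlet-to-Neumann map is used as in \cite[Proposition 3.8]{Julin2024}. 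This yields $\norm{\mc}_{H^{3/2}(\parOmega_t)}\lesssim 1+\norm{\Bdnabla\partial_\normal p}_{L^2}$ plus interpolation terms. Lemma \ref{lem_sffcontrolledbymc} then upgrades this to $\norm{\sff}_{H^{3/2}}$; the required $L^4$ bound on $\sff$ follows from the $C^{1,\alpha}$ assumption together with an interpolation using $\norm{p}_{L^2(\parOmega_t)}\le C_\ddagger$. Elliptic regularity then gives $\norm{p}_{H^2(\Omega_t)}$, and hence the boundary normal derivative. Each step will produce a product of a small-power norm with the top norm, which we absorb by Young's inequality.

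\emph{Second}, we control $u$ and $h$ in $H^3$ using the div-curl estimate $\norm{v}_{H^3}\lesssim\norm{\vorticity v}_{H^2}+\norm{\divergence v}_{H^2}+\norm{v\cdot\normal}_{H^{5/2}(\parOmega_t)}+\norm{v}_{L^2}$, with constant depending on $\norm{\sff}_{H^{3/2}}$ (already controlled). For $h$ the normal trace vanishes. For $u$ we need $u_n\in H^{5/2}(\parOmega_t)$. We obtain this from \eqref{eq_Dtp}: $\LB u_n=-\DT p-|\sff|^2u_n+\Bdnabla p\cdot u$, which reduces the task to bounding $\DT p$ in $H^{1/2}(\parOmega_t)$. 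For that, we solve for $\DT p$ via $\lpe\DT p$ (using \cite[Lemma 2.11]{Hao2025}) with boundary data $\DT\mc$. We estimate $\nabla\DT p$ using $\DT^2u=-\nabla\DT p-[\DT,\nabla]p+\DT(h\cdot\nabla h)$, so $\norm{\nabla\DT p}_{L^2}\lesssim\norm{\DT^2u}_{L^2}+$ lower order, and the trace then gives $\DT p\in H^{1/2}$. Next, $\DT u\in H^{3/2}$ follows from $\DT u=-\nabla p+h\cdot\nabla h$ once $p\in H^{5/2}$, and this last bound comes from the first step pushed half a derivative further using the $u_n$ information. $\DT h\in H^{3/2}$ follows from \eqref{eq_MHD2}, and $\DT^2h$ from \eqref{eq_Dt_jH}.

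The main obstacle is the circularity in these steps: $\sff$ in $H^{3/2}$ needs $p$ near $H^{5/2}$; $p$ needs $u$ in $H^3$; $u_n$ in $H^{5/2}$ needs $\DT p$, which in turn needs $\sff$. We plan to break it by interpolation. The lower-order bounds ($L^4$ for the gradients, $L^2$ for $\nabla p$, $C^{1,\alpha}$ geometry) allow every coupling term to be written as $C\,X^\theta Y^{1-\theta}$, with $Y$ the top-order quantity and $\theta>0$. We therefore sum all the estimates with suitable weights and absorb the top-order terms using Young's inequality with $\varepsilon$. We expect the most delicate points to be the Lorentz term $(h\cdot\sff)\cdot h$ in the boundary relation and the product $|\sff|^2u_n$, which must be handled at exactly the $H^{1/2}$ level.
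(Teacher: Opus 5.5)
Your plan has a genuine gap at the step where the bound must become \emph{linear} in $\energy$, with a constant depending only on $C_\ddagger$.

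\textbf{Where it fails.} In your second step you use an $H^3$ div-curl estimate whose constant depends on $\norm{\sff}_{H^{3/2}(\parOmega_t)}$, and you call this quantity ``already controlled''. In your own scheme, however, it is bounded only in terms of $\energy(t)$ and other top-order quantities, not by a constant depending on $C_\ddagger$. Young's inequality cannot remove a multiplicative constant that depends on a top-order quantity. At best you would get $\Energy(t)\le P(\energy(t))$ for some polynomial $P$, not \eqref{eq_EBPlee}, and linearity is exactly what the Gr\"onwall argument in Theorem \ref{thm_main1} needs. The same objection applies to obtaining the $L^4$ bound on $\sff$ by interpolation. It should instead come directly from $\norm{\mc}_{H^{1/2}(\parOmega_t)}\le\norm{p}_{L^2(\parOmega_t)}+\norm{\nabla p}_{L^2(\Omega_t)}\le C_\ddagger$, so that every elliptic constant depends only on $\norm{\sff}_{H^{1/2}}\le C(C_\ddagger)$.

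A tame $H^3$ estimate does exist (the paper uses \cite[Lemma 3.7]{Julin2024}). There the geometry enters only as $(1+\norm{\mc}_{H^2(\parOmega_t)})\norm{u}_{L^\infty}$, but this requires $\mc\in H^2$, i.e.\ $p\in H^{5/2}$, half a derivative more than your first step provides. Your route to $p\in H^{5/2}$ (``push the first step half a derivative further using the $u_n$ information'') is not a mechanism. First, $u_n$ is tied by \eqref{eq_Dtp} to $\DT p$, not to $p$. Second, as used in the paper, \cite[Proposition 3.8]{Julin2024} bounds the Neumann trace of a solution with zero Dirichlet data, which is the opposite direction from the Neumann-to-Dirichlet regularity you need.

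\textbf{The missing idea.} You need to obtain $\DT u\in H^{3/2}(\Omega_t)$ directly and linearly. The paper does this by applying the low-regularity div-curl estimate \cite[Theorem 3.1]{Julin2024}, whose constant depends only on $\norm{\sff}_{H^{1/2}}$, to $\DT u$ itself:
\begin{itemize}
\item its normal trace is controlled in $H^1(\parOmega_t)$ by $\norm{\Bdnabla(\DT u\cdot\normal)}_{L^2(\parOmega_t)}$ plus an absorbable $L^2$ term;
\item $\divergence\DT u=-\partial_iu^j\partial_ju^i$ is handled by interpolation;
\item since $\vorticity\nabla p=0$, we have $\vorticity\DT u=\vorticity(h\cdot\nabla h)$, whose top-order part $(h\cdot\nabla)(\vorticity h)$ is controlled by $\norm{\vorticity h}_{H^2}$ in $\widetilde{\energy}$.
\end{itemize}
The momentum equation then gives $\nabla p=-\DT u+h\cdot\nabla h\in H^{3/2}$. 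The trace theorem gives $\mc\in H^2$, and Lemma \ref{lem_sffcontrolledbymc} gives $\sff\in H^{3/2}$. After this, the tame estimate for $u$ and $h$ closes the argument, together with your treatment of $\LB u_n$ through $\DT p$ and $\norm{\DT^2u}_{L^2}$, which is correct and coincides with the paper's.

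\textbf{How to repair your route.} Your boundary identity $(h\cdot\nabla h)\cdot\normal=-(h\cdot\sff)\cdot h$ would also work if you applied the same $H^{3/2}$ div-curl estimate to the curl-free field $\nabla p$. Its normal trace is $\partial_\normal p=-\DT u\cdot\normal-(h\cdot\sff)\cdot h$ and its divergence is $\lpe p$, both handled as above. This is essentially the paper's step written for $\nabla p$ instead of $\DT u$, and with it your separate first step at the $H^2$ level becomes unnecessary. As written, though, your proposal does not contain this step.
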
 
\begin{proof}
By the conservation of physical energy,
\begin{equation*}
\norm{u}_{L^2(\Omega_t)}^2+\norm{h}_{L^2(\Omega_t)}^2+\int_{\parOmega_t}1dS\equiv \norm{u}_{L^2(\Omega_0)}^2+\norm{h}_{L^2(\Omega_0)}^2+\int_{\parOmega_0}1dS,
\end{equation*}
and by the Gagliardo-Nirenberg inequality, we have
\begin{equation*}
\norm{u}_{W^{1,4}(\Omega_t)}+\norm{h}_{W^{1,4}(\Omega_t)}\le C(C_\ddagger).
\end{equation*}
We shall show that 
\begin{equation*}
\Energy(t)\le C\widetilde{\energy}(t),
\end{equation*} 
for which we need to estimate
\begin{equation*}
\norm{\DT u}_{H^{\frac{3}{2}}(\Omega_t)}^2,\norm{\DT h}_{H^{\frac{3}{2}}(\Omega_t)}^2,\norm{u}_{H^3(\Omega_t)}^2,\ \text{and}\ \norm{h}_{H^3(\Omega_t)}^2.
\end{equation*} 
Recalling that we have already established 
the estimate for $\norm{\DT h}_{H^{\frac{3}{2}}(\Omega_t)}^2$ in \eqref{eq_DTH} 
(where we utilize $\norm{\nabla u}_{L^3(\Omega_t)}+\norm{h}_{L^{10}(\Omega_t)}\le C(C_\ddagger)$ 
via Sobolev embedding), we readily obtain
\begin{equation}\label{eq_DTH32}
\norm{\DT h}_{H^{\frac{3}{2}}(\Omega_t)}^2\le \vare\Energy(t)+C_\vare(C_\ddagger).
\end{equation} 
Thus, it suffices to bound $\norm{\DT u}_{H^{\frac{3}{2}}(\Omega_t)}^2, 
\norm{u}_{H^3(\Omega_t)}^2,$ and $\norm{h}_{H^3(\Omega_t)}^2$.

\vspace{2mm}
\noindent\textbf{Estimate of $\norm{\DT u}_{H^{\frac{3}{2}}(\Omega_t)}^2$.}

Since 
$\norm{p}_{L^{2}(\parOmega_t)}+\norm{\nabla p}_{L^{2}(\Omega_t)}\le C_\ddagger$, 
we have $\norm{\sff}_{H^{\frac12}(\parOmega_t)}\le C(C_\ddagger)$. 
Therefore, we can extend the outward unit normal $\normal$ to $\Omega_t$ 
such that $\norm{\normal}_{H^{2}(\Omega_t)}\le C(C_\ddagger)$ (which we 
still denote by $\normal$ as in \eqref{eq_normalOmega}). 
By the definition of $\widetilde{\energy}(t)$ and the divergence theorem, we have
\begin{align*}
\norm{\DT u\cdot \normal}_{L^2(\parOmega_t)}^2={}&\int_{\parOmega_t} \Paren{\DT u\cdot \normal}\DT u\cdot \normal dS\\
\le{}&\module{\int_{\Omega_t} \Paren{\DT u\cdot \normal}\divergence\DT udx}+\module{\int_{\Omega_t} \nabla\DT u\star\DT u\star\normal dx}+\module{\int_{\Omega_t} \DT u\star\nabla\normal\star\DT udx}\\
\le{}& C(C_\ddagger)\Paren{\norm{\DT u}_{L^{2}(\Omega_t)}^2+\norm{\DT u}_{H^{1}(\Omega_t)}^2+\norm{\DT u}_{L^{3}(\Omega_t)}^2\norm{\nabla\normal}_{L^{3}(\Omega_t)}}\\
\le{}& \vare\norm{\DT u}_{H^{\frac32}(\Omega_t)}^2+C_\vare(C_\ddagger)\norm{\DT u}_{L^{2}(\Omega_t)}^2,
\end{align*}
where we have applied interpolation in the last step. Combining this 
with the regularity bound $\norm{\sff}_{H^{\frac12}(\parOmega_t)}\le C(C_\ddagger)$, 
we invoke the div-curl estimate (cf. \cite[Theorem 3.1]{Julin2024}) to obtain
\begin{equation*}
\norm{\DT u}_{H^{\frac{3}{2}}(\Omega_t)}^2\le  C(C_\ddagger)\Paren{\norm{\DT u\cdot\normal}_{H^{1}(\parOmega_t)}^2+\norm{\DT u}_{L^{2}(\Omega_t)}^2+\norm{\divergence\DT u}_{H^{\frac12}(\Omega_t)}^2+\norm{\vorticity\DT u}_{H^{\frac 12}(\Omega_t)}^2}, 
\end{equation*} 
Applying \eqref{eq_lpep} along with the bound $\norm{\DT u}_{L^{2}(\Omega_t)}^2\le C(C_\ddagger)$ 
from \eqref{eq_MHD1}, we find that
\begin{equation*}
\norm{\DT u}_{H^{\frac{3}{2}}(\Omega_t)}^2\le C(C_\ddagger)\Paren{\widetilde{\energy}(t)+\underbrace{\norm{\nabla u\star\nabla u}_{H^{\frac12}(\Omega_t)}^2+\norm{\nabla h\star\nabla h}_{H^{\frac 12}(\Omega_t)}^2}_{\eqqcolon L_1(t)}+\underbrace{\norm{\Paren{h\cdot\nabla} \Paren{\vorticity h}}_{H^{\frac 12}(\Omega_t)}^2}_{\eqqcolon L_2(t)}}.
\end{equation*}
By the Kato-Ponce inequality and interpolation, we can estimate $L_1(t)$ as
\begin{align*}
L_1(t)&\le C(C_\ddagger)\Paren{\norm{\nabla u}_{L^3(\Omega_t)}^2\norm{\nabla u}_{W^{\frac 12,6}(\Omega_t)}^2+\norm{\nabla h}_{L^3(\Omega_t)}^2\norm{\nabla h}_{W^{\frac 12,6}(\Omega_t)}^2}\\ &\le \vare\Paren{\norm{u}_{H^3(\Omega_t)}^2+\norm{h}_{H^3(\Omega_t)}^2}+C_\vare(C_\ddagger)\\
&\le \vare \Energy(t)+C_\vare(C_\ddagger).
\end{align*} 

As for $L_2(t)$, we deduce that
\begin{align*}
L_2(t)\le{}& C(C_\ddagger)\Paren{\norm{h}_{L^6(\Omega_t)}^2\norm{\nabla\Paren{\vorticity h}}_{W^{\frac 12,3}(\Omega_t)}^2 +\norm{\nabla\Paren{\vorticity h}}_{L^{6}(\Omega_t)}^2 \norm{h}_{W^{\frac 12,3}(\Omega_t)}^2} \\
\le{}& C(C_\ddagger)\norm{\vorticity h}_{H^2(\Omega_t)}^2\\
\le{}& C(C_\ddagger)\widetilde{\energy}(t).
\end{align*}
Combining the above estimates yields
\begin{equation}\label{eq_DTv32}
\norm{\DT u}_{H^{\frac{3}{2}}(\Omega_t)}^2
\le \vare \Energy(t)+ C_\vare(C_\ddagger)\widetilde{\energy}(t). 
\end{equation}

\vspace{2mm}
\noindent\textbf{Bounding $\norm{u}_{H^3(\Omega_t)}^2$ and $\norm{h}_{H^3(\Omega_t)}^2$ by $\norm{\LB  u_n}_{H^{\frac 12}(\parOmega_t)}^2$.}

From the div-curl type estimate (cf. \cite[Lemma 3.7]{Julin2024}), we see that
\begin{align*}
\norm{u}_{H^3(\Omega_t)}^2
&\le C\Bracket{\norm{\LB  u_n}_{H^{\frac12}(\parOmega_t)}^2+\Paren{1+\norm{\mc}_{H^{2}(\parOmega_t)}^2}\norm{u}_{L^\infty(\Omega_t)}^2+\norm{\vorticity u}_{H^2(\Omega_t)}^2},\\ 
&\le C(C_\ddagger)\Paren{\norm{\LB  u_n}_{H^{\frac12}(\parOmega_t)}^2+\widetilde{\energy}(t)+\norm{\mc}_{H^{2}(\parOmega_t)}^2}.
\end{align*}
since $\norm{u}_{L^\infty(\Omega_t)}\le C\norm{u}_{W^{1,4}(\Omega_t)}\le C(C_\ddagger)$ 
by Sobolev embedding. Similarly, using \eqref{eq_MHD4}, we have
\begin{align*}
\norm{h}_{H^3(\Omega_t)}^2
&\le C\Bracket{\Paren{1+\norm{\mc}_{H^{2}(\parOmega_t)}^2}\norm{h}_{L^\infty(\Omega_t)}^2+\norm{\vorticity h}_{H^2(\Omega_t)}^2},\\ 
&\le C(C_\ddagger)\Paren{\widetilde{\energy}(t)+\norm{\mc}_{H^{2}(\parOmega_t)}^2}.
\end{align*}

Then, invoking \eqref{eq_DTv32} and interpolation, we can bound the 
mean curvature by
\begin{align*}
\norm{\mc}_{H^{2}(\parOmega_t)}^2&\le C\Paren{\norm{p}_{L^2(\parOmega_t)}^2+\norm{\nabla p}_{H^{\frac 32}(\Omega_t)}^2}\\
&\le C(C_\ddagger)\Paren{\norm{\DT u}_{H^{\frac32}(\Omega_t)}^2+\norm{h\cdot\nabla h}_{H^{\frac 32}(\Omega_t)}^2+1}\\
&\le\vare\Energy(t)+C(C_\ddagger)\Paren{\widetilde{\energy}(t)+\norm{h}_{L^\infty(\Omega_t)}^2\norm{\nabla h}_{H^{\frac 32}(\Omega_t)}^2+\norm{h}_{W^{\frac 32,4}(\Omega_t)}^2\norm{\nabla h}_{L^4(\Omega_t)}^2}\\
&\le\vare\Energy(t)+C(C_\ddagger)\widetilde{\energy}(t)+\vare\norm{h}_{H^3(\Omega_t)}^2\\
&\le2\vare\Energy(t)+C(C_\ddagger)\widetilde{\energy}(t).
\end{align*}
Consequently, we obtain
\begin{equation*}
\norm{u}_{H^3(\Omega_t)}^2+\norm{h}_{H^3(\Omega_t)}^2\le 2\vare\Energy(t)+C(C_\ddagger)\Paren{\norm{\LB  u_n}_{H^{\frac12}(\parOmega_t)}^2+\widetilde{\energy}(t)}.
\end{equation*}

\vspace{2mm}
\noindent\textbf{Estimate of $\norm{\LB  u_n}_{H^{\frac 12}(\parOmega_t)}^2$.}

It remains to bound $\norm{\LB  u_n}_{H^{\frac 12}(\parOmega_t)}^2$. 
To this end, we recall Lemma \ref{lem_sffcontrolledbymc} and \eqref{eq_MHD1}; 
applying the trace theorem, it follows that
\begin{align}
\norm{\sff}_{H^{\frac 32}(\parOmega_t)}^2\le{}& C(C_\ddagger)\Paren{1+\norm{p}_{H^{\frac 32}(\parOmega_t)}^2}\nonumber\\
\le{}& C(C_\ddagger)\Paren{1+\norm{h\cdot\nabla h-\DT u}_{H^{1}(\Omega_t)}^2}\nonumber\\
\le{}& C(C_\ddagger)\Paren{1+\norm{h}_{L^4(\Omega_t)}^2\norm{\nabla h}_{W^{1,4}(\Omega_t)}^2+\norm{h}_{H^1(\Omega_t)}^2\norm{\nabla h}_{L^\infty(\Omega_t)}^2}\nonumber\\
&+\vare\norm{\DT u}_{H^{\frac 32}(\Omega_t)}^2+C_\vare(C_\ddagger)\norm{\DT u}_{L^2(\Omega_t)}^2\nonumber\\
\le{}&\vare \Energy(t)+C(C_\ddagger)\widetilde{\energy}(t),\label{eq_BHfrac32}
\end{align}
where we have utilized Sobolev embedding, interpolation, and \eqref{eq_DTv32}. 
From \eqref{eq_Dtp}, we have
\begin{equation*}
\LB u_n=-\DT p-|\sff|^2u_n+\Bdnabla p\cdot u.
\end{equation*} 
Using Sobolev embedding theorems and interpolation, we can deduce
\begin{align}
\norm{\DT p}_{H^{\frac 12}(\parOmega_t)}^2
\le{}& C(C_\ddagger)\Paren{\norm{\DT p}_{L^{2}(\parOmega_t)}^2+\norm{\nabla\DT p}_{L^{2}(\Omega_t)}^2}\nonumber\\
\le{}& C(C_\ddagger)\bigg(\norm{u}_{H^2(\parOmega_t)}^2+\norm{\sff\star\Bdnabla u}_{L^{2}(\parOmega_t)}^2+\norm{\DT^2u}_{L^{2}(\Omega_t)}^2+\norm{\DT\Paren{h\cdot\nabla h}}_{L^{2}(\Omega_t)}^2\nonumber\\
&\qquad\qquad +\norm{\nabla u\star\Paren{h\cdot\nabla h-\DT u}}_{L^{2}(\Omega_t)}^2\bigg)\nonumber\\
\le{}& C(C_\ddagger)\Bracket{\widetilde{\energy}(t)+\vare\Paren{\norm{\DT u}_{H^{\frac 32}(\Omega_t)}^2+\norm{\DT h}_{H^{\frac 32}(\Omega_t)}^2+\norm{u}_{H^3(\Omega_t)}^2+\norm{h}_{H^3(\Omega_t)}^2}+C_\vare(C_\ddagger)}\nonumber\\ 
\le{}&\vare \Energy(t)+ C(C_\ddagger)\widetilde{\energy}(t),\label{eq_DTp12}
\end{align}
since $\norm{\DT u}_{H^{\frac 32}(\Omega_t)}^2$ and $\norm{\DT h}_{H^{\frac 32}(\Omega_t)}^2$ 
have already been controlled. Similarly, applying \eqref{eq_BHfrac32}, we have
\begin{equation*}
\norm{\Bdnabla p\cdot u}_{H^{\frac 12}(\parOmega_t)}^2\le \vare \Energy(t)+ C(C_\ddagger)\widetilde{\energy}(t),
\end{equation*}
and with the aid of the bilinear inequality, we obtain
\begin{equation*}
\norm{|\sff|^2u_n}_{H^{\frac 12}(\parOmega_t)}^2\le \norm{\normal}_{H^{\frac 32}(\parOmega_t)}^2\norm{\sff}_{H^{\frac 32}(\parOmega_t)}^4\norm{u}_{H^{\frac 12}(\parOmega_t)}^2\le C(C_\ddagger)\norm{u}_{H^1(\Omega_t)}^2\le \vare \Energy(t)+ C(C_\ddagger)\widetilde{\energy}(t).
\end{equation*}
Based on the above calculations, it follows that
\begin{align*}
\norm{\LB u_n}_{H^{\frac 12}(\parOmega_t)}^2\le{}& C(C_\ddagger)\Paren{\norm{\DT p}_{H^{\frac 12}(\parOmega_t)}^2+\norm{\Bdnabla p\cdot u}_{H^{\frac 12}(\parOmega_t)}^2+\norm{|\sff|^2u_n}_{H^{\frac 12}(\parOmega_t)}^2}\\  
\le{}& \vare \Energy(t)+ C_\vare(C_\ddagger)\widetilde{\energy}(t).
\end{align*}
As a result, we arrive at
\begin{equation*}
\norm{u}_{H^3(\Omega_t)}^2+\norm{h}_{H^3(\Omega_t)}^2\le 3\vare\Energy(t)+C(C_\ddagger)\widetilde{\energy}(t).
\end{equation*}

Recalling \eqref{eq_DTH32} and \eqref{eq_DTv32}, we conclude that
\begin{equation*}
\Energy(t)\le C(C_\ddagger)\widetilde{\energy}(t).
\end{equation*}
Finally, combining \eqref{eq_BHfrac32}, \eqref{eq_DTp12}, and \eqref{eq_tildeee}, 
the desired estimate \eqref{eq_EBPlee} follows. This completes the proof.
\end{proof}

\subsection{Proofs of Theorems \ref{thm_main1} and \ref{thm_main2}}\label{sec_profthm12}

\begin{proof}[Proof of Theorem \ref{thm_main1}]
We divide the proof into two steps.

\vspace{2mm} 
\noindent\textbf{Step 1.}
Recall that by applying the a priori assumptions \eqref{eq_aprioriassumption}, 
we have obtained in \eqref{eq_nablapL2} that
\begin{equation*}
\sup_{t\in[0,T)}\norm{\nabla p}_{L^2(\Omega_t)}^2\le C(\bd).
\end{equation*}
Consequently, the hypotheses of Proposition \ref{pro_Elee} are satisfied 
uniformly for $0\le t<T$. Combining Proposition \ref{pro_Elee} with 
Proposition \ref{lem_ddt}, we deduce that
\begin{equation*} 
\frac{d}{dt}\energy(t)\le  C(\bd)\Paren{1+\norm{\nabla  p}_{H^1(\Omega_t)}^2}\Paren{1+\energy(t)},\quad 0<t<T.
\end{equation*} 
Integrating this differential inequality over $(0,T)$ and invoking 
Propositions \ref{lem_int0TnablapH1} and \ref{pro_Elee}, we have
\begin{equation*}
\sup_{t\in[0,T)}\energy(t)\le e^{C(\bd)(1+T)}(1+\energy(0))\ \text{and}\ \sup_{t\in[0,T)}\Energy(t)\le C(\bd)e^{C(\bd)(1+T)}\Energy(0).
\end{equation*} 
We then deduce from \eqref{eq_MHD1} that
\begin{equation*}
\norm{p}_{H^{\frac {5}{2}}(\Omega_t)}^2\le  C(\bd)\Paren{1+\norm{h\cdot\nabla h-\DT u}_{H^{\frac 32}(\Omega_t)}^2}\le C(\bd)e^{C(\bd)(1+T)}\Energy(0).
\end{equation*}
Utilizing Lemma \ref{lem_sffcontrolledbymc} and applying the arguments 
from Proposition \ref{pro_E(0)}, we further obtain
\begin{equation*}
\norm{\sff}_{H^{2}(\parOmega_t)}^2+\norm{\DT p}_{H^1(\Omega_t)}^2\le C(\bd)e^{C(\bd)(1+T)}\Energy(0).
\end{equation*} 
Combining these estimates with Proposition \ref{pro_E(0)}, the desired 
bound \eqref{eq_mainresult1} follows.

\vspace{2mm}
\noindent\textbf{Step 2.}
It remains to prove that the a priori assumptions \eqref{eq_aprioriassumption} 
hold for some time $T_0\ge c_0>0$, where the constant $c_0$ depends on 
$\assone_0, \norm{u_0}_{H^3(\Omega_0)}, \norm{h_0}_{H^3(\Omega_0)}$, and 
$\norm{\mc}_{H^2(\parOmega_0)}$.

To this end, we define the following auxiliary quantity:
\begin{equation*}
\Lambda(t) \coloneqq \norm{\sff}_{L^4(\parOmega_t)}^4+\norm{\nabla p}_{L^2(\Omega_t)}^2+\norm{\nabla u}_{L^4(\Omega_t)}^4+\norm{\nabla h}_{L^4(\Omega_t)}^4+1,\quad t\ge 0.
\end{equation*} 
We define $T_0\in (0, 1]$ to be the largest number such that
\begin{equation}\label{eq_time_T0}
[0,T_0]\subset\left\lbrace t\in[0,1]:\Lambda(t)\le 2\Lambda(0),\assone_t \ge \assone_0/2,\ \text{and}\ \energy(t)\le1+\energy(0)\right\rbrace.
\end{equation}
Here, we assume that $T_0<1$, since the claim would be trivial otherwise.

Then, by \eqref{eq_MHD4}, \eqref{eq_Hfrac121}, and the first condition in 
\eqref{eq_time_T0}, we have
\begin{equation*}
\norm{\mc}_{H^{\frac 12}(\parOmega_t)}^2\le C\Paren{\norm{\sff}_{L^2(\parOmega_t)}^2+\norm{\nabla p}_{L^2(\Omega_t)}^2}\le C(\Lambda(0)).
\end{equation*}

We invoke the following regularity result (cf. \cite[Proposition A.2]{Shatah2008a}) to recover the regularity of the height function $\eta$:
\begin{lemma}\label{lem_boudr}
Let $\Omega$ be a domain whose boundary does not 
self-intersect and satisfies $\parOmega\in H^{s_0}$ for some $s_0>2$. 
Suppose that $\norm{\mc}_{H^{s-2}(\parOmega)}\le C_*$ for some 
$s>s_0$ and a positive constant $C_*$. Then $\parOmega\in H^s$, 
and $\norm{\parOmega}_{H^{s}}\le C(C_*)$.
\end{lemma}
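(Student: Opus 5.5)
The plan is to prove Lemma~\ref{lem_boudr} by a bootstrap in the regularity index, using the elliptic structure of the mean-curvature operator in local graph coordinates. The hypothesis $\parOmega\in H^{s_0}$ with $s_0>2$ gives $C^{1,\alpha}$ control of the boundary, since $H^{s_0}\hookrightarrow C^{1,s_0-2}$ on a two-dimensional surface. By compactness and the absence of self-intersection, we can therefore cover $\parOmega$ by finitely many coordinate patches. In each patch the surface is a graph $x^3=\psi(x')$ over a disc in a tangent plane, with $\norm{\nabla\psi}_{C^{\alpha}}$ controlled. The number and size of the patches depend only on the $C^{1,\alpha}$ character, hence only on $\norm{\parOmega}_{H^{s_0}}$; I will treat this as part of the constant, in the same way as in the paper's use of Lemma~\ref{lem_sffcontrolledbymc}.

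In a patch, the mean curvature is given by the quasilinear divergence-form operator
\begin{equation*}
-\mc=\nabla'\cdot\Paren{\frac{\nabla'\psi}{\sqrt{1+|\nabla'\psi|^2}}}=a^{ij}(\nabla'\psi)\,\partial_{ij}\psi,
\end{equation*}
where $a^{ij}$ is smooth and uniformly elliptic because $|\nabla'\psi|$ is bounded. Let $\chi$ be a cutoff. Then $\chi\psi$ solves an elliptic equation whose coefficients lie in $H^{s_0-1}\subset C^{\alpha}$ and whose right-hand side is $\chi\mc\circ\Phi$ plus commutator terms involving only lower derivatives of $\psi$. Here $\Phi$ is the graph map, and $\norm{\mc\circ\Phi}_{H^{s-2}}\lesssim C(\norm{\psi}_{H^{s_0}})\norm{\mc}_{H^{s-2}(\parOmega)}$ for $s-2\le s_0-1$; when $s$ is larger, this composition estimate is applied inside the bootstrap at the current regularity level.

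The iteration proceeds as follows. First, Schauder or $L^p$ theory with $C^\alpha$ coefficients and $\mc\in L^p$ (using $H^{s-2}\hookrightarrow L^p$ in two dimensions when $s>2$) gives $\psi\in W^{2,p}$. Next, writing the equation in divergence form and differentiating, or applying the product estimate
\begin{equation*}
\norm{a(\nabla'\psi)\partial^2\psi}_{H^{r}}\lesssim \norm{a}_{L^\infty}\norm{\psi}_{H^{r+2}}+\norm{a(\nabla'\psi)}_{H^{r}}\norm{\partial^2\psi}_{L^\infty\text{ or }L^p},
\end{equation*}
together with the $H^{r}$ elliptic estimate for $a^{ij}\partial_{ij}$ with coefficients in $H^{\sigma}$, $\sigma>1$, upgrades $\psi\in H^{r+1}$ to $\psi\in H^{r+2}$ whenever $\mc\in H^{r}$ and $r\le s-2$. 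Each step gains up to one derivative, at most in increments of $\min(1,s_0-2)$, until $r=s-2$, giving $\psi\in H^{s}$ locally. Summing over patches then yields $\norm{\parOmega}_{H^s}\le C(C_*)$.

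I expect the main obstacle to be the first step near the threshold $s_0>2$. There the coefficients are only Hölder continuous, and the tame product estimates in fractional Sobolev spaces must be arranged so that the lower-order commutator terms never require more regularity than is already known. I would handle this by working first in $W^{2,p}$ with large $p$, via Calderón--Zygmund theory with continuous coefficients, to reach Lipschitz control of $\nabla\psi$. After that, the $H^r$ iteration is routine with Kato--Ponce. Finally, I would make sure the constants depend only on $C_*$ and on the fixed bound for $\norm{\parOmega}_{H^{s_0}}$, which fixes the patch size.
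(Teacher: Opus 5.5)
Your bootstrap is correct and is essentially the standard local elliptic-regularity argument for the mean-curvature operator behind \cite[Proposition A.2]{Shatah2008a}, which the paper cites rather than reproves; you are also right that the constant must depend on the fixed $H^{s_0}$ bound, not on $C_*$ alone. One correction: $W^{2,p}$ theory with finite $p$ never gives Lipschitz control of $\nabla\psi$, and for $s<3$ only $p\le 2/(3-s)$ is available, but that step is unnecessary, since the $H^{r}$ estimate with coefficients $a(\nabla'\psi)\in H^{s_0-1}$, $s_0-1>1$, which you also invoke, already handles the threshold case.
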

By Lemma \ref{lem_boudr}, we have
\begin{equation}\label{eq_boundh}
\norm{\eta(\cdot,t)}^2_{H^{\frac52}(\Gamma)}\le C(\Lambda(0)).
\end{equation}  
Using the definition of $\Lambda(t)$, \eqref{eq_time_T0}, and \eqref{eq_boundh}, 
we have
\begin{align}
\asstwo_{T_0}&\le \sup_{t\in[0,T_0)}\Energy(t)+\sup_{t\in[0,T_0)}\Paren{\norm{\eta(\cdot,t)}^2_{H^{\frac52}(\Gamma)}+\norm{u_n}_{H^{2}(\parOmega_t)}}\nonumber\\ 
&\le  C(\Lambda(0))+\sup_{t\in[0,T_0)}\Energy(t)+\sup_{t\in[0,T_0)}\norm{u_n}_{H^{2}(\parOmega_t)}.\label{eq_NT0le}
\end{align}
By Proposition \ref{pro_E(0)}, $\Lambda(0)\le C_{\operatorname{initial}}$, 
and thus,
\begin{equation}\label{eq_mceta}
\norm{\mc}_{H^{\frac 12}(\parOmega_t)}^2+\norm{\eta(\cdot,t)}^2_{H^{\frac52}(\Gamma)}\le C_{\operatorname{initial}},
\end{equation}
where $C_{\operatorname{initial}}$ depends on $\norm{u_0}_{H^3(\Omega_0)}, 
\norm{h_0}_{H^3(\Omega_0)}$, and $\norm{\mc}_{H^2(\parOmega_0)}$ as defined in 
Proposition \ref{pro_E(0)}.

We only need to estimate $\norm{u_n}_{H^{2}(\parOmega_t)}$. From the decomposition
\begin{equation*}
\Bdnabla u_n=\Bdnabla u\cdot \normal-u\star\sff,
\end{equation*} 
and the second tangential gradient
\begin{equation*}
\Bdnabla^2 u_n=\Bdnabla^2 u\star \normal+\Bdnabla u\star\sff+u\star \Bdnabla \sff.
\end{equation*} 
We can apply Lemma \ref{lem_sffcontrolledbymc} (since we have \eqref{eq_mceta}) 
and the arguments in \eqref{eq_BHfrac32} to bound
\begin{equation}\label{eq_unh2}
\norm{u_n}_{H^{2}(\parOmega_t)}\le  C\Paren{\norm{u}_{H^2(\parOmega_t)},\norm{\sff}_{H^1(\parOmega_t)}}\le C\Paren{\norm{u}_{H^3(\Omega_t)},\norm{\mc}_{H^1(\parOmega_t)}}\le C\Paren{\Energy(t),C_{\operatorname{initial}}}.
\end{equation}
Consequently,
\begin{equation*}
\asstwo_{T_0}\le C\Paren{\sup_{t\in[0,T_0)}\Energy(t),C_{\operatorname{initial}}}.
\end{equation*}

By the conditions in \eqref{eq_time_T0} and $\Lambda(0)\le C_{\operatorname{initial}}$, 
an application of Proposition \ref{pro_Elee} yields
\begin{equation}\label{eq_mainthmElee}
\Energy(t)\le C_{\operatorname{initial}}\Paren{1+\energy(t)}.
\end{equation}  

We note that the last condition in \eqref{eq_time_T0}, \eqref{eq_mainthmElee}, 
together with Proposition \ref{pro_E(0)} implies that 
\begin{equation}\label{eq_mainthmsupE}
\sup_{t\in[0,T_0)}\Energy(t)\le C\Paren{1+\energy(t)}\le C\Paren{2+\energy(0)}\le C\Energy(0)\le C_{\operatorname{initial}}.
\end{equation}

Combining the above analysis, we conclude that
\begin{equation}\label{eq_NT0} 
\asstwo_{T_0} \le C_{\operatorname{initial}}.
\end{equation}

Since the a priori assumptions \eqref{eq_aprioriassumption} hold for time 
$T=T_0$, the claim follows once we show that $T_0$ specified in \eqref{eq_time_T0} 
has a lower bound $c_0>0$. From the definition of $T_0$, at least one of 
the three conditions is satisfied with equality.

\vspace{2mm}
\noindent\textbf{Case 1: $\Lambda(T_0) = 2\Lambda(0)$.}

We assume that $\Lambda(T_0) = 2\Lambda(0)$.
We will show that
\begin{equation}\label{eq_LambdaleELambda}
\frac{d}{dt}\Lambda(t)\le C\Energy(t) \Lambda(t).
\end{equation}  
Applying \eqref{eq_RT1} and \eqref{eq_Dt,nabla}, we have
\begin{align*}
\frac{d}{dt}\Paren{\norm{\nabla u}_{L^4(\Omega_t)}^4+\norm{\nabla h}_{L^4(\Omega_t)}^4}={}&\int_{\Omega_t}\DT \nabla  u\star\Paren{\nabla u}^{\star,3}+ \DT \nabla h\star\Paren{\nabla h}^{\star,3}dx\\
={}&\int_{\Omega_t} \nabla \DT u\star\Paren{\nabla u}^{\star,3}+\nabla \DT  h\star\Paren{\nabla h}^{\star,3}dx\\
&+\int_{\Omega_t}\Paren{\nabla u}^{\star,5}+\Paren{\nabla h}^{\star,5}dx,
\end{align*}
where $T^{\star,m}$ denotes the $m$-fold $\star$-product of the tensor $T$. Therefore,
\begin{align*}
\module{\frac{d}{dt}\Paren{\norm{\nabla u}_{L^4(\Omega_t)}^4+\norm{\nabla h}_{L^4(\Omega_t)}^4}}
\le{}& C\norm{\nabla\DT u}_{L^3(\Omega_t)}\norm{\nabla u}_{L^\infty(\Omega_t)}\norm{\nabla u}_{L^3(\Omega_t)}\norm{\nabla u}_{L^3(\Omega_t)}\\
&+C\norm{\nabla\DT h}_{L^3(\Omega_t)}\norm{\nabla h}_{L^\infty(\Omega_t)}\norm{\nabla h}_{L^3(\Omega_t)}\norm{\nabla h}_{L^3(\Omega_t)}\\
&+C\norm{\nabla u}_{L^\infty(\Omega_t)}\norm{\nabla u}_{L^4(\Omega_t)}\norm{\nabla u}_{L^4(\Omega_t)}\norm{\nabla u}_{L^4(\Omega_t)}\norm{\nabla u}_{L^4(\Omega_t)}\\
&+C\norm{\nabla u}_{L^\infty(\Omega_t)}\norm{\nabla h}_{L^4(\Omega_t)}\norm{\nabla h}_{L^4(\Omega_t)}\norm{\nabla h}_{L^4(\Omega_t)}\norm{\nabla h}_{L^4(\Omega_t)}\\  
\le{}& C\Energy(t)\Lambda(t).
\end{align*}
Similarly, for the pressure term, we have
\begin{align*}
\frac{d}{dt}\norm{\nabla p}_{L^2(\Omega_t)}^2={}&\int_{\Omega_t}\DT\nabla p:\nabla p dx\\
={}&\int_{\Omega_t}[\DT,\nabla] p:\nabla p dx+\int_{\Omega_t}\nabla \DT p:\nabla p dx\\ 
={}&\int_{\Omega_t}\nabla u\star\nabla p\star\nabla p dx+\int_{\Omega_t} \lpe\DT p\, p dx+\int_{\parOmega_t}\partial_\normal\DT p\, p dS\\
\eqqcolon{}&\Lambda_1(t)+\Lambda_2(t)+\Lambda_3(t).
\end{align*}
We estimate these terms as follows:
\begin{equation*}
\module{\Lambda_1(t)}\le C\norm{\nabla u}_{L^\infty(\Omega_t)}\norm{\nabla p}_{L^2(\Omega_t)}^2\le C\Energy(t)\Lambda(t).
\end{equation*}
To handle $\Lambda_2(t)$, we recall from \cite[Lemma 2.11]{Hao2025} that
\begin{align*}
-\lpe \DT p={}& \divergence\divergence\Paren{u\otimes\DT u}+\divergence \Paren{\nabla u\star\DT u+\nabla u\star\nabla u\star u}+\nabla^2 u\star \nabla h\star h\\
&+\nabla^2 h\star\nabla u\star h+\nabla^2 h\star\nabla h\star u+\nabla u\star\nabla h\star\nabla h.
\end{align*}
Consequently, we have
\begin{align*}
\lpe\DT p={}& u\star\nabla\divergence\DT u+\nabla^2 u\star\DT u+\nabla u\star\nabla\DT u+\nabla^2u\star\nabla u\star u+\nabla^2u\star\nabla h\star h\\
&+\nabla^2h\star\nabla u\star h+\nabla^2h\star\nabla h\star u+\nabla u\star\nabla u\star \nabla u+\nabla u\star\nabla h\star \nabla h.
\end{align*}
Then, we derive the following estimate:
\begin{equation*} 
\module{\Lambda_2(t)}\le  C\norm{\lpe\DT  p}_{L^{2}(\Omega_t)}\norm{p}_{L^2(\Omega_t)} \le C\Energy(t)\Lambda(t),
\end{equation*}
since the products formed by the multiplication of two terms satisfy
\begin{align*}
\norm{u\star\nabla\divergence\DT u}_{L^2(\Omega_t)}&\le C \norm{u\star\nabla u\star\nabla^2 u}_{L^2(\Omega_t)}\le C\Energy(t)\sqrt{\Lambda(t)}, \\
\norm{\nabla^2 u\star\DT u}_{L^2(\Omega_t)}&\le C \norm{\nabla^2 u}_{L^6(\Omega_t)}\norm{\DT u}_{L^3(\Omega_t)}\le C\Energy(t), \\
\norm{\nabla u\star\nabla\DT u}_{L^2(\Omega_t)}&\le C\norm{\nabla u}_{L^6(\Omega_t)}\norm{\nabla\DT u}_{L^3(\Omega_t)}\le C\Energy(t),
\end{align*}
and the multiplication of three terms satisfies
\begin{align*} 
\norm{\nabla^2u\star\nabla u\star u}_{L^2(\Omega_t)}&\le C \norm{\nabla^2u}_{L^4(\Omega_t)}\norm{\nabla u}_{L^4(\Omega_t)}\norm{u}_{L^\infty(\Omega_t)} \le C\Energy(t)\sqrt{\Lambda(t)},\\
\norm{\nabla u\star\nabla u\star\nabla u}_{L^2(\Omega_t)}&\le C \norm{\nabla u}_{L^\infty(\Omega_t)}\norm{\nabla u}_{L^4(\Omega_t)}\norm{\nabla u}_{L^4(\Omega_t)} \le C\Energy(t)\sqrt{\Lambda(t)}.
\end{align*}
For the last term $\Lambda_3(t)$, note that by the normal trace theorem 
and the trace theorem, we have
\begin{align*}
\module{\Lambda_3(t)}&\le C\norm{\partial_\normal\DT  p}_{H^{-\frac 12}(\parOmega_t)}\norm{p}_{H^{\frac 12}(\parOmega_t)}\\
&\le C\Paren{\norm{\nabla\DT  p}_{L^2(\Omega_t)}+\norm{\divergence\nabla\DT  p}_{H^{-1}(\Omega_t)}}\sqrt{\Lambda(t)}\\
&\le C \norm{\nabla\DT  p}_{L^2(\Omega_t)}\sqrt{\Lambda(t)},
\end{align*}
where we have used the following estimate for the divergence term:
\begin{align*}
\norm{\divergence\nabla\DT  p}_{H^{-1}(\Omega_t)}
\le{}&\sup \Brace{\module{\int_{\Omega_t}\divergence\nabla\DT  p qdx}:q\in H^1_0(\Omega_t),\norm{q}_{H^1_0(\Omega_t)}\le 1}\nonumber\\
\le{}&\sup \Brace{\module{\int_{\Omega_t}\nabla \DT p\cdot \nabla qdx}:q\in H^1_0(\Omega_t),\norm{q}_{H^1_0(\Omega_t)}\le 1}\nonumber\\
\le{}& \norm{\nabla\DT  p}_{L^{2}(\Omega_t)}.
\end{align*}  
Then, by \eqref{eq_MHD1} and \eqref{eq_Dt,nabla}, it suffices to bound
\begin{align*}
\norm{\nabla\DT  p}_{L^{2}(\Omega_t)}\le{}& C\Paren{\norm{[\nabla,\DT]  p}_{L^{2}(\Omega_t)}+\norm{\DT\nabla p}_{L^{2}(\Omega_t)}}\\
\le{}& C\Paren{\norm{\nabla u\star \nabla p}_{L^{2}(\Omega_t)}+\norm{\DT\Paren{-\DT u+h\cdot\nabla h}}_{L^{2}(\Omega_t)}}\\
\le{}&C\bigg(\norm{\nabla u}_{L^\infty(\Omega_t)}\norm{\nabla p}_{L^2(\Omega_t)}+\norm{\DT^2u}_{L^2(\Omega_t)}+\norm{\nabla h}_{L^\infty(\Omega_t)}\norm{\DT h}_{L^2(\Omega_t)}\\
&\qquad+\norm{h}_{L^4(\Omega_t)}\norm{\nabla u}_{L^\infty(\Omega_t)}\norm{\nabla h}_{L^4(\Omega_t)}+\norm{h}_{L^6(\Omega_t)}\norm{\nabla\DT h}_{L^3(\Omega_t)}\bigg)\\
\le{}&C\Energy(t)\sqrt{\Lambda(t)}.
\end{align*}
Therefore, we have
\begin{equation*}
\module{\frac{d}{dt}\norm{\nabla p}_{L^2(\Omega_t)}^2}\le C\Energy(t)\Lambda(t).
\end{equation*}

Similarly, invoking the identity
\begin{equation*}
\DT\sff=-\Bdnabla^2u\star\normal-\Bdnabla u\star \sff,
\end{equation*} 
we obtain 
\begin{align*}
\frac{d}{dt} \norm{\sff}_{L^4(\parOmega_t)}^4={}&\int_{\parOmega_t}\DT \sff\star\sff^{\star,3} dS+\int_{\parOmega_t}|\sff|^4\Bddiv udS\\
={}&-\int_{\parOmega_t}\Bdnabla^2u\star\normal\star\sff^{\star,3} dS-\int_{\parOmega_t}\Bdnabla u\star \sff^{\star,4} dS+\int_{\parOmega_t}|\sff|^4\Bddiv udS,
\end{align*} 
and
\begin{align*}
\module{\frac{d}{dt} \norm{\sff}_{L^4(\parOmega_t)}^4}&\le C\Paren{\norm{\Bdnabla^2u}_{L^4(\parOmega_t)}\norm{\sff}_{L^4(\parOmega_t)}^3+\norm{\Bdnabla u}_{L^\infty(\parOmega_t)}\norm{\sff}_{L^4(\parOmega_t)}^4}\\
&\le C\Energy(t)\Lambda(t).
\end{align*}
We conclude that \eqref{eq_LambdaleELambda} follows. Therefore,
\begin{equation}\label{eq_LambdaleELambda2}
\frac{d}{dt}\Lambda(t)\le C\Energy(t) \Lambda(t) \le C_{\operatorname{initial}}\Lambda(t).
\end{equation}  

Integrating \eqref{eq_LambdaleELambda2} over $(0,T_0)$ and using 
$\Lambda(T_0) = 2 \Lambda(0)$, we obtain
\begin{equation*}
\ln 2=\ln \Lambda(T_0)-\ln \Lambda(0)\le C_{\operatorname{initial}}T_0.
\end{equation*}
This yields
\begin{equation*}
T_0\ge c_0,
\end{equation*}
where the constant $c_0$ depends only on the initial data.

\vspace{2mm}
\noindent\textbf{Case 2: $\energy(T_0)= 1+\energy(0)$.}

A similar argument applies if we have an equality in the third condition, i.e.,
\begin{equation*}
\energy(T_0)= 1+\energy(0).
\end{equation*}  
In this case, the a priori assumptions hold by \eqref{eq_NT0}, whereby 
we can apply the a priori estimates and \eqref{eq_mainthmsupE} to obtain
\begin{equation*}
\frac{d}{dt}\energy(t)\le C_{\operatorname{initial}}.
\end{equation*}
Integrating over $(0,T_0)$ gives
\begin{equation*}
1=\energy(T_0)-\energy(0)\le C_{\operatorname{initial}}T_0,
\end{equation*}
which again results in
\begin{equation*}
T_0 \ge c_0>0.
\end{equation*}

\vspace{2mm}
\noindent\textbf{Case 3: $\assone_{T_0} = \assone_0/2$.}

Finally, suppose that an equality in the second condition occurs, i.e., 
$\assone_{T_0} = \assone_0/2$. Recalling that 
\begin{equation*}
\assone_{T_0}= \radi - \sup_{t\in[0,T_0)}\norm{\eta(\cdot, t)}_{L^\infty(\Gamma)},
\end{equation*} 
and $\assone_0>0$, we define $0<T_1 \le T_0$ by
\begin{equation*}
\assone_{T_0} =  \radi- \norm{\eta(\cdot,T_1)}_{L^\infty(\Gamma)}.
\end{equation*}
It is clear that $\norm{u_n}_{L^\infty(\parOmega_t)}^2\le C\Energy(t) \le C_{\operatorname{initial}}$ 
by \eqref{eq_mainthmsupE}. We apply 
the fundamental theorem of calculus to find that
\begin{align*}
\assone_{T_0}&=\radi-\norm{\eta(\cdot,T_1)}_{L^\infty(\parOmega)}\\
&\ge  \radi-\norm{\eta_0}_{L^\infty(\parOmega)}-\int_0^{T_1} \norm{u_n}_{L^\infty(\parOmega_t)}\, dt \\
&\ge \assone_{0}- C_{\operatorname{initial}}T_1,
\end{align*} 
recalling $\assone_{T_0}=\assone_0/2$, which implies
\begin{equation*}
T_0\ge T_1\ge \assone_{0}/\Paren{2C_{\operatorname{initial}}}>0.
\end{equation*}
This establishes the desired lower bound for $T_0$.
\end{proof} 
\begin{proof}[Proof of Theorem \ref{thm_main2}]
We proceed by contradiction. Assume that $T_{\operatorname{max}}<\infty$ 
and either $(u,h)(\cdot,T_{\operatorname{max}})\notin H^3(\Omega_{T_{\operatorname{max}}})\times H^3(\Omega_{T_{\operatorname{max}}})$ or $\parOmega_{T_{\operatorname{max}}}\notin H^4$. Suppose further that none 
of the scenarios (1)--(4) occur, which implies that
\begin{equation*}
\inf_{t\in[0,T_{\operatorname{max}})}\radi(\Omega_t)>0,\quad \sup_{t\in[0,T_{\operatorname{max}})}\norm{\parOmega_t}_{H^{\frac52}}<\infty,
\end{equation*} 
and
\begin{equation*}
\sup_{t\in[0,T_{\operatorname{max}})}\Paren{\norm{\nabla u}_{L^{\infty}(\Omega_t)}+\norm{\nabla h}_{L^{\infty}(\Omega_t)}+\norm{\nabla^2h}_{L^{2}(\Omega_t)}+\norm{u_n}_{H^{2}(\parOmega_t)}}<\infty,
\end{equation*}
where we have applied Lemma \ref{lem_boudr}. In particular, 
$\radi(\Omega_{T_{\operatorname{max}}})>0$, and we choose $\parOmega_{T_{\operatorname{max}}}$ 
as the reference surface to represent the free boundary over a short 
time interval before $T_{\operatorname{max}}$. More precisely, the height 
function $\eta(\cdot,t)$ is well-defined on $\left[T_{\operatorname{max}}-\vare,T_{\operatorname{max}}\right)$ 
for sufficiently small $\vare>0$, and we have
\begin{equation*}
\sup_{t\in\left[T_{\operatorname{max}}-\vare,T_{\operatorname{max}}\right)}\norm{\eta}_{H^{\frac52}(\parOmega_{T_{\operatorname{max}}})}<\infty.
\end{equation*} 
Applying the estimates in Theorem \ref{thm_main1}, it follows that 
$u(\cdot,T_{\operatorname{max}})\in H^3(\Omega_{T_{\operatorname{max}}}), 
h(\cdot,T_{\operatorname{max}})\in H^3(\Omega_{T_{\operatorname{max}}})$, 
and $ \parOmega_{T_{\operatorname{max}}}\in H^4$, which implies that the solution 
can be extended beyond $T_{\operatorname{max}}$. This contradicts the maximality 
of $T_{\operatorname{max}}$, thereby completing the proof.
\end{proof}

\section{Geometric setup for the self-intersection singularity}\label{sec4}

In the remainder of this paper, we prove that the singular scenario (1) 
predicted in Theorem \ref{thm_main2} does indeed occur. We begin by 
introducing the geometric and analytical setup required for the 
construction of such a self-intersecting solution.

\subsection{Definition of the self-intersecting domain}

Heuristically, a self-intersecting domain $\Omega_\ddagger$ is an open and 
bounded subset of $\mathbb{R}^3$ that lies locally on one side of its 
boundary, except at a single point $x_0\in\parOmega_\ddagger$, where the domain 
locally occupies both sides of the tangent plane at $x_0$. To provide a 
precise definition of such a self-intersecting domain, we first introduce 
the following notation.

Let $B(0,r)$ be the open ball centered at the origin with radius $r>0$. 
We denote
\begin{equation*}
B_r=B(0,r),\quad B^+_r=B_r\cap\Brace{x^3>0},\ \text{and}\ B^-_r=B_r\cap\Brace{x^3<0}.
\end{equation*}
For brevity, we also set
\begin{equation*}
B=B_1,\quad B^\pm=B^\pm_1,\ \text{and}\ B^{\sharp}=\overline{B}\cap\Brace{x^3=0}.
\end{equation*} 
Recall that for a non-self-intersecting domain $\Omega$ of class 
$H^s$ (where $s$ is sufficiently large), a collection of boundary charts 
$\Brace{\cov_l}_{l=1}^K$ forms an open cover of $\parOmega$ such that 
for each $l\le K$, there exists an $H^s$-diffeomorphism $\loc_l$ satisfying 
\begin{equation*}
\loc_l: B\to\cov_l\ \text{is an}\ H^{s}\ \text{diffeomorphism,}\  \loc_l(B^+)=\cov_l\cap\Omega,\  \text{and}\ \loc_l(B^{\sharp})=\overline{\cov_l}\cap\parOmega.
\end{equation*} 
The interior charts of $\Omega$, denoted by $\Brace{\cov_l}_{l=K+1}^L$, 
consist of a family of open subsets of $\Omega$ such that 
$\Brace{\cov_l}_{l=1}^K\cup \Brace{\cov_l}_{l=K+1}^L$ is an open cover 
of $\Omega$, and there exist $H^s$-diffeomorphisms
\begin{equation*}
\loc_l:B \to \cov_l,
\end{equation*}  
for $K+1\le l\le L$.

In what follows, we restrict 
our attention to axisymmetric self-intersecting fluid regions. These can 
be viewed as being generated by rotating a two-dimensional self-intersecting 
domain, which is symmetric with respect to both the $x^1$ and $x^3$ axes, 
around a specified coordinate axis. Fig.\,\ref{f6} and Fig.\,\ref{f1} illustrate 
self-intersecting fluid domains without curvature blow-up (obtained by 
rotation around the $x^3$-axis) and with curvature blow-up (obtained by 
rotation around the $x^1$-axis), respectively.

As discussed in Section \ref{sec_strategy}, the local structure of the construction suggests possible extensions to other configurations of boundary self-intersection, such as multiple-point contacts or splat singularities along curves or sets of
positive surface measure.
\begin{figure}[htbp] 
\centering
\includegraphics[width=14cm]{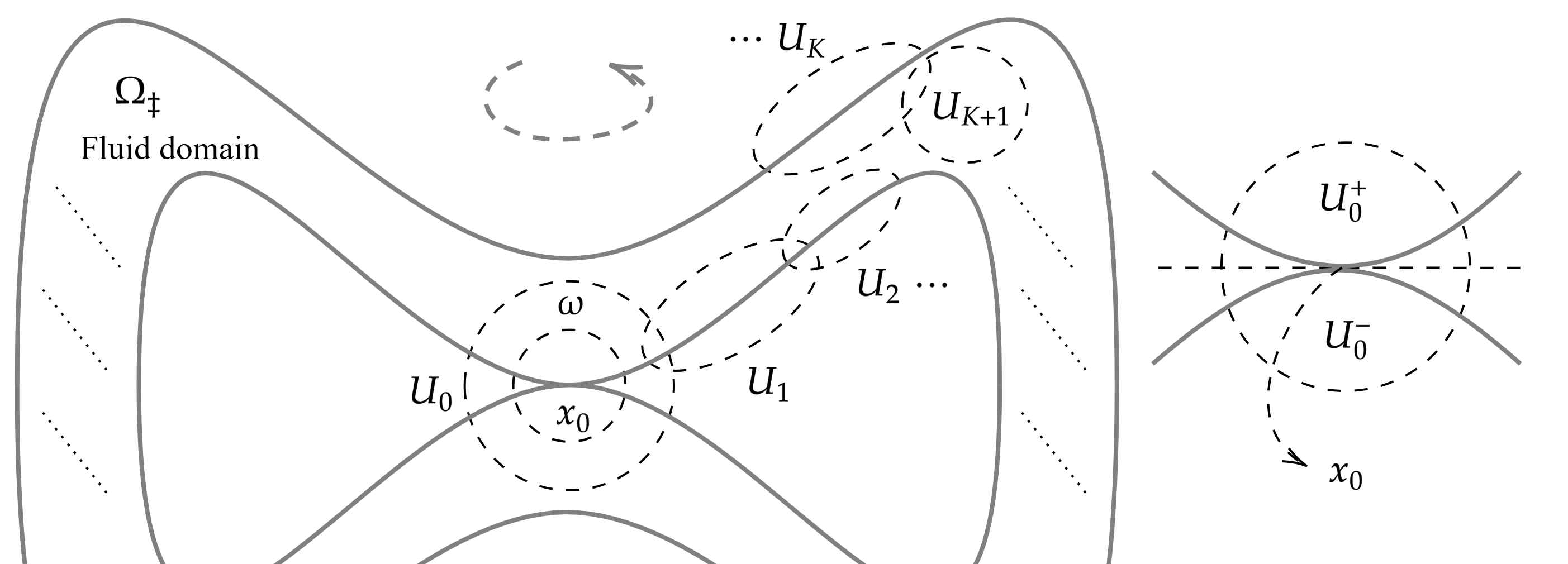} 
\caption{Open covers near the self-intersection point.}
\label{f5}
\end{figure}
\begin{definition}
Assume that $s\ge \frac92$. We say that $\Omega_\ddagger$ is an $H^s$-class self-intersecting domain (with 
a single-point contact) if it is defined by a collection of open covers
\begin{equation*}
\Brace{\cov_0}\cup\Brace{\cov_l}_{l=1}^K\cup\Brace{\cov_l}_{l=K+1}^L,
\end{equation*} 
and associated maps 
\begin{equation*}
\Brace{\loc_\pm}\cup\Brace{\loc_l}_{l=1}^K\cup\Brace{\loc_l}_{l=K+1}^L
\end{equation*} 
satisfying the following properties (1)--(5) (see Fig.\,\ref{f5}).

\begin{enumerate}
\item Without loss of generality, we assume that $x_0=\Paren{x_0^1,x_0^2,x_0^3}\in\parOmega_\ddagger$ 
is the unique self-intersection point, and that the tangent plane at 
$x_0$ is the horizontal plane $x^3=0$ (assuming $x_0^3=0$).
\item Let $\cov_0$ denote an open neighborhood of $x_0$. 
We choose additional boundary charts $\Brace{\cov_l}_{l=1}^K$ such that 
the collection $\Brace{\cov_l}_{l=0}^K$ forms an open cover of $\parOmega_\ddagger$. 
Then, let $\Brace{\cov_l}_{l=K+1}^{L}$ denote a family of open sets 
contained in $\Omega_\ddagger$ such that $\Brace{\cov_l}_{l=0}^{L}$ is an open 
cover of $\Omega_\ddagger$. We choose a sufficiently small neighborhood 
$\omega\subset\cov_0$ containing $x_0$ with the property that
\begin{equation*}
\overline{\omega}\cap\overline{\cov_l}=\emptyset,\quad 1\le l\le L.
\end{equation*} 
We define the upper and lower halves of the neighborhood in $\Omega_\ddagger$ as
\begin{equation*} 
\cov_0^+=\cov_0\cap\Omega_\ddagger\cap\Brace{x^3>x_0^3}\ \text{and}\ \cov_0^-=\cov_0\cap\Omega_\ddagger\cap\Brace{x^3<x_0^3}.
\end{equation*}
Additionally, we assume that 
\begin{equation*}
\overline{\cov_0}\cap\overline{\Omega_\ddagger}\cap\Brace{x^3=x_0^3}=\Brace{x_0},
\end{equation*} 
which implies, in particular, that $\cov_0^+$ and $\cov_0^-$ are connected. 
Due to the assumed symmetry, we have
\begin{equation*}
\cov_0^+= \Brace{(x^1,x^2,-x^3):(x^1,x^2,x^3)\in\cov_0^-},
\end{equation*} 
and we require that the one-sided mean curvatures of $\Omega_\ddagger$ 
(evaluated on the boundaries of $\cov_0^+$ and $\cov_0^-$) coincide 
at $x_0$:
\begin{equation*}
\mc^+=\mc_{\cov_0^+}(x_0)=\mc_{\cov_0^-}(x_0)=\mc^-.
\end{equation*} 
\item For each $1\le l\le K$, there exists an $H^s$-diffeomorphism $\loc_l$ 
satisfying
\begin{equation*}
\loc_l:B\to\cov_l,\ \loc_l(B^+)=\cov_l\cap\Omega_\ddagger,\ \text{and}\ \loc_l(B^{\sharp})=\overline{\cov_l}\cap\parOmega_\ddagger.  
\end{equation*}
We assume that there exist positive constants $C_l$ for $1\le l\le K$ 
such that 
\begin{equation*}
\operatorname{det}(\nabla \loc_l)=C_l.
\end{equation*}
This requirement can always be satisfied by appealing to classical 
results on prescribed Jacobian determinants \cite{Dacorogna1990}.
\item For $K+1\le l\le L$, the map $\loc_l:B\to\cov_l$ is an $H^s$-diffeomorphism. 
We similarly assume that there exist positive constants $C_l$ such that
\begin{equation*}
\operatorname{det}(\nabla \loc_l)=C_l.
\end{equation*}
\item To the open set $\cov_0$, we associate two $H^s$-diffeomorphisms 
$\loc_+$ and $\loc_-$ possessing the following properties:
\begin{equation*} 
\loc_+(B^+)=\cov_0^+,\quad \loc_-(B^-)=\cov_0^-,\quad \loc_+(B^{\sharp})=\overline{\cov_0^+}\cap\parOmega_\ddagger,\quad \loc_-(B^{\sharp})=\overline{\cov_0^-}\cap\parOmega_\ddagger,
\end{equation*}
such that (recalling that we assume that $x_0^3=0$)
\begin{align*}
\Paren{\loc_-^1,\loc_-^2,\loc_-^3}\Paren{y^1,y^2,y^3}&=\Paren{\loc_+^1,\loc_+^2,-\loc_+^3}\Paren{y^1,y^2,-y^3},\ \Paren{y^1,y^2,y^3}\in B^-,\\
\loc_+(B^{\sharp})\cap\loc_-(B^{\sharp})&=\Brace{x_0},\ \text{and}\ \loc_{\pm}(0,0,0)=x_0.
\end{align*}  
We further require that the inner regions do not overlap with the 
standard boundary or interior charts: 
\begin{equation*}
\overline{\loc_+(B^+_{1/2})}\cap\overline{\loc_l(B^+)}=\emptyset,\ l\le K,\quad  \overline{\loc_+(B^+_{1/2})}\cap\overline{\loc_l(B)}=\emptyset,\ l\ge K+1,
\end{equation*}  
and
\begin{equation*}
\overline{\loc_-(B^-_{1/2})}\cap\overline{\loc_l(B^+)}=\emptyset,\ l\le K,\quad  \overline{\loc_-(B^-_{1/2})}\cap\overline{\loc_l(B)}=\emptyset,\ l\ge K+1.
\end{equation*} 
\end{enumerate} 
\end{definition}
For a self-intersecting domain $\Omega_\ddagger$ defined as above, we note 
that near the intersection point $x_0$, the Jacobian matrix satisfies
\begin{equation*}
\nabla\loc_-\Paren{y^1,y^2,y^3}=\begin{pmatrix}
\pl_1\loc_+^1 & \pl_2\loc_+^1 & -\pl_3\loc_+^1\\
\pl_1\loc_+^2 & \pl_2\loc_+^2 & -\pl_3\loc_+^2\\
-\pl_1\loc_+^3 & -\pl_2\loc_+^3 & \pl_3\loc_+^3
\end{pmatrix}\Paren{y^1,y^2,-y^3}.
\end{equation*}
It then follows that the determinants are equal:
\begin{equation*}
\operatorname{det}(\nabla\loc_-)=-\operatorname{det}\begin{pmatrix}
\pl_1\loc_+^1 & \pl_2\loc_+^1 & \pl_3\loc_+^1\\
\pl_1\loc_+^2 & \pl_2\loc_+^2 & \pl_3\loc_+^2\\
-\pl_1\loc_+^3 & -\pl_2\loc_+^3 & -\pl_3\loc_+^3
\end{pmatrix}=\operatorname{det}\begin{pmatrix}
\pl_1\loc_+^1 & \pl_2\loc_+^1 & \pl_3\loc_+^1\\
\pl_1\loc_+^2 & \pl_2\loc_+^2 & \pl_3\loc_+^2\\
\pl_1\loc_+^3 & \pl_2\loc_+^3 & \pl_3\loc_+^3
\end{pmatrix}=\operatorname{det}(\nabla\loc_+).
\end{equation*}
By applying the prescribed Jacobian construction \cite{Dacorogna1990} to $\loc_+$ and defining $\loc_-$ through the above symmetry, we may further arrange that
\begin{equation}\label{eq_detC0}
\operatorname{det}(\nabla\loc_\pm)=C_0>0.
\end{equation}   
\subsection{Approximate fluid domains}
To approximate a self-intersecting domain $\Omega_\ddagger$ using a family of non-self-intersecting regular domains $\Omega^\pr$, we approximate the two distinguished charts $\loc_\pm$ 
by a family of charts $\loc_\pm^\pr$ such that
\begin{equation*}
\loc_-^\pr(B^{\sharp})\cap\loc_+^\pr(B^{\sharp})=\emptyset,\ \forall \pr>0,\ \text{and}\ \loc_{\pm}^\pr\to\loc_{\pm}\ \text{in}\ H^s,\ \pr \to 0,
\end{equation*}  
where $s\ge\frac92$.
We choose a sufficiently small constant $\mathfrak{r}\in (0, 1/4)$ so that
\begin{equation*}
\loc_-(B^-_{2\mathfrak{r}})\subset\omega\ \text{and}\ \loc_+(B^+_{2\mathfrak{r}})\subset\omega.
\end{equation*}
Fixing an integer $3\le k_0\le s-\frac32$, we let $\psi$ 
denote a smooth bump function satisfying
\begin{equation*}
\psi\in\mathcal{D}(B(0,\mathfrak{r})),\quad 0\le\psi\le1,\quad \psi(0)=1,
\end{equation*} 
and
\begin{equation}\label{eq_psinorm}
\norm{\psi}_{H^{k_0+1}(B)}\le \norm{\nabla\loc_\pm}_{L^2(B^\pm)}.
\end{equation}  
Then, for $\pr>0$ chosen sufficiently small, we define (see Fig.\,\ref{f7})
\begin{equation*} 
\loc_-^\pr(y)=\loc_-(y)- (0,0,\pr\psi(y))^\top,\quad \loc_+^\pr(y)=\loc_+(y)+ (0,0,\pr\psi(y))^\top.
\end{equation*}  
\begin{figure}[htbp] 
\centering
\includegraphics[width=14cm]{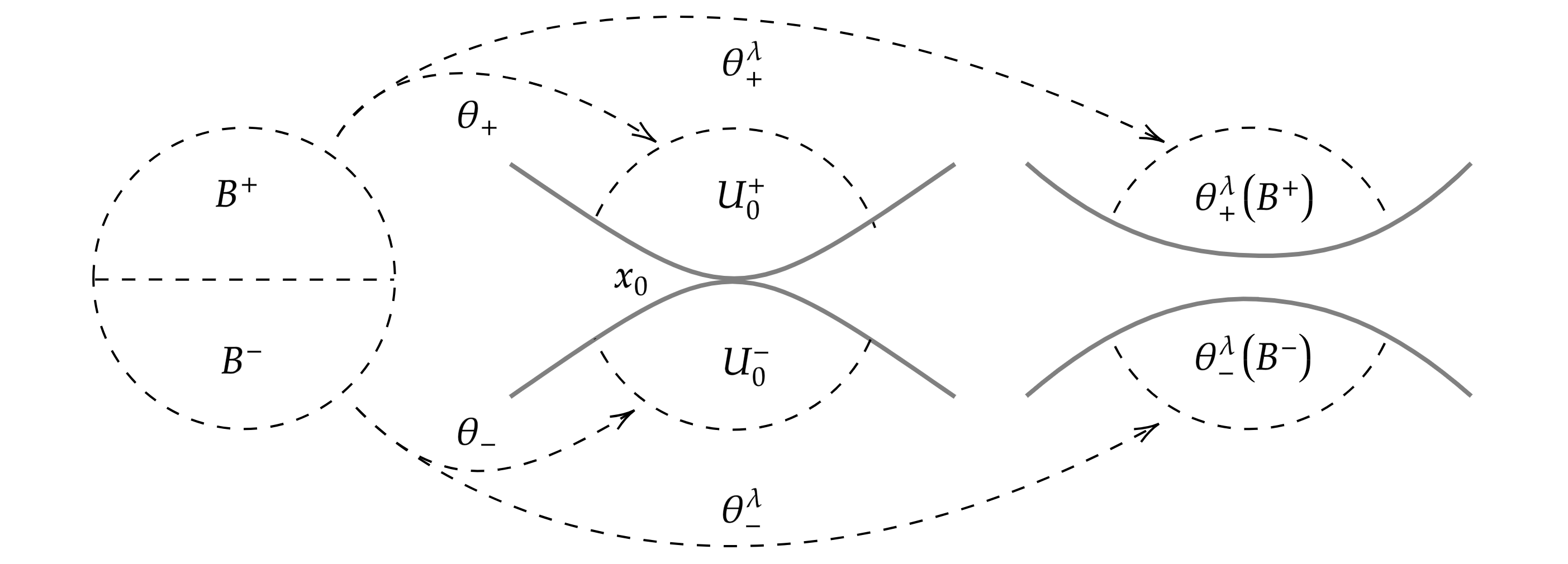} 
\caption{Small perturbations of the local coordinates.}
\label{f7}
\end{figure}

A direct calculation shows that
\begin{equation*}
\nabla\loc_\pm^\pr(y)=\nabla \loc_\pm(y)\pm\pr\begin{pmatrix}
0 & 0 & 0\\
0 & 0 & 0\\
\partial_1\psi(y) & \partial_2\psi(y) & \partial_3\psi(y)
\end{pmatrix}.
\end{equation*}
Since the Jacobian matrix $\nabla \loc_\pm$ is invertible, the identity
\begin{equation*}
\det(A+B) = \det(A)\left[ 1 + \operatorname{tr}(A^{-1}B) \right] + O(\norm{B}^2),\quad \norm{B}\to 0,
\end{equation*}
implies that
\begin{equation*}
\operatorname{det}\Paren{\nabla\loc_\pm^\pr}=\operatorname{det}\Paren{\nabla\loc_\pm}\Bracket{1\pm\pr\operatorname{tr}\Paren{\nabla\loc_\pm^{-1}\begin{pmatrix}
0 & 0 & 0\\
0 & 0 & 0\\
\partial_1\psi & \partial_2\psi & \partial_3\psi
\end{pmatrix}}}+ O\Paren{\pr^2},\quad \pr\to 0.
\end{equation*}
As a result, for $\pr>0$ sufficiently small, by \eqref{eq_detC0}, we have
\begin{equation}\label{eq_jacobian}
\frac12 C_0\le \operatorname{det}\Paren{\nabla\loc_\pm^\pr}\le \frac32 C_0,
\end{equation} 
Moreover, since 
\begin{equation*}
\norm{\nabla\loc_\pm}_{H^k(B^\pm)}-\pr\norm{\nabla\psi}_{H^k(B^\pm)}\le \norm{\nabla\loc_\pm^\pr}_{H^k(B^\pm)}\le\norm{\nabla\loc_\pm}_{H^k(B^\pm)}+\pr\norm{\nabla\psi}_{H^k(B^\pm)},\quad k\le k_0,
\end{equation*}
invoking \eqref{eq_psinorm}, we obtain the following estimate for 
sufficiently small $\pr>0$:
\begin{equation}\label{eq_nablathetavarepsilon}
\frac{1}{2}\norm{\nabla\loc_\pm}_{H^k(B^\pm)}\le \norm{\nabla\loc_\pm^\pr}_{H^k(B^\pm)}\le\frac{3}{2}\norm{\nabla\loc_\pm}_{H^k(B^\pm)}.
\end{equation}

We have ensured that the modification of the domain is localized to 
a small neighborhood of $x_0$, well away from the boundary of $\cov_0$ 
and the images of the remaining maps $\loc_l$ for $1\le l\le K$. 
Since the maps $\loc_\pm^\pr$ only modify $\loc_\pm$ within a very 
small neighborhood of the origin in $B$, it follows that for $\pr>0$ 
sufficiently small,
\begin{equation*}
\overline{\loc_+^\pr(B^+_{1/2})}\cap\overline{\loc_l(B^+)}=\emptyset,\ l\le K,\quad  \overline{\loc_+^\pr(B^+_{1/2})}\cap\overline{\loc_l(B)}=\emptyset,\ l\ge K+1,
\end{equation*}  
and
\begin{equation*}
\overline{\loc_-^\pr(B^-_{1/2})}\cap\overline{\loc_l(B^+)}=\emptyset,\ l\le K,\quad  \overline{\loc_-^\pr(B^-_{1/2})}\cap\overline{\loc_l(B)}=\emptyset,\ l\ge K+1.
\end{equation*} 
Furthermore, for all other indices $1\le l\le L$, we simply set
\begin{equation*}
\loc_l^\pr=\loc_l.
\end{equation*}
Therefore, the collection $\Brace{\loc_\pm^\pr,\loc_1^\pr,\cdots,\loc_L^\pr}$ 
serves as a family of local coordinates. The approximate domain $\Omega^\pr$ (see Fig.\,\ref{f8})
is then defined by
\begin{align*}
\Omega^\pr&=\loc_+^\pr(B^+)\cup\loc_-^\pr(B^-)\cup\bigcup_{l=1}^{K}\loc_l^\pr(B^+)\cup\bigcup_{l=K+1}^{L}\loc_l^\pr(B)\\
&=\loc_+^\pr(B^+)\cup\loc_-^\pr(B^-)\cup\bigcup_{l=1}^{K}\loc_l(B^+)\cup\bigcup_{l=K+1}^{L}\loc_l(B),
\end{align*}
and its boundary is given by
\begin{equation*}
\parOmega^\pr=\loc_+^\pr(B^{\sharp})\cup\loc_-^\pr(B^{\sharp})\cup\bigcup_{l=1}^{K}\loc_l(B^{\sharp}).
\end{equation*}
Note that, in general, $\loc_{\pm}^\pr(B^\pm)\subsetneqq \cov_0^{\pm}$.

The following lemma asserts that the approximating domain preserves 
the original regularity and is free of self-intersections.
\begin{lemma}
For each sufficiently small $\pr>0$, the set $\Omega^\pr$ defined 
by the charts
\begin{equation*}
\loc_\pm^\pr:B^\pm\to\loc_\pm^\pr\Paren{B^\pm},\ \loc_l^\pr:B^+\to\loc_l^\pr\Paren{B^+},\ l\le K,\ \loc_l^\pr:B\to \loc_l^\pr\Paren{B},\ K+1\le l\le L
\end{equation*}
is an $H^s$-class domain that lies locally on one side of its 
$H^{s-\frac 12}$ boundary.
\end{lemma}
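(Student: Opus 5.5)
The plan has three parts: show that each perturbed chart is still a diffeomorphism of class $H^s$, show that the two perturbed branches near $x_0$ become disjoint, and assemble the charts into a domain with the one-sided property.

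\textbf{Step 1: the perturbed charts remain $H^s$-diffeomorphisms.} Away from $B(0,\mathfrak r)$ we have $\loc_\pm^\pr=\loc_\pm$, and all other charts are untouched, so only the neighborhood of the origin needs attention.

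\begin{itemize}
\item By \eqref{eq_jacobian}, $\det(\nabla\loc_\pm^\pr)\in[\tfrac12C_0,\tfrac32C_0]$, so each $\loc_\pm^\pr$ is a local diffeomorphism.
\item For global injectivity on $\overline{B^\pm_{1/2}}$ I use $\loc_\pm^\pr-\loc_\pm=\pm\pr(0,0,\psi)$ together with $\|\pr\psi\|_{C^1}\le C\pr$. Since $s\ge 9/2$, $\loc_\pm\in C^{2}$, so $\loc_\pm$ is bi-Lipschitz on compacts.
\item For two points $y,y'$ this gives $|\loc_\pm^\pr(y)-\loc_\pm^\pr(y')|\ge (c-C\pr)|y-y'|>0$ for small $\pr$.
\item Regularity is immediate: $\psi$ is smooth, so $\loc^\pr_\pm\in H^s$, and by \eqref{eq_nablathetavarepsilon} the norms are uniform in $\pr$.
\item The boundary pieces $\loc_\pm^\pr(B^\sharp)$ are images of the flat disc under $H^s$ maps. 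Their regularity is therefore given by the trace of $\nabla\loc^\pr_\pm$, which lies in $H^{s-1/2}$, so the boundary is $H^{s-1/2}$.
\end{itemize}

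\textbf{Step 2: the two branches no longer meet near $x_0$.} I verify $\loc_+^\pr(\overline{B^+})\cap\loc_-^\pr(\overline{B^-})=\emptyset$ locally.

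\begin{itemize}
\item By the symmetry in property (5), $\loc_-^\pr$ is the reflection $x^3\mapsto -x^3$ of $\loc_+^\pr$ composed with $y^3\mapsto-y^3$. It therefore suffices to show that $\loc_+^\pr(\overline{B^+_{1/2}})$ lies in $\{x^3>0\}$ near $x_0$.
\item Before perturbation, $\loc_+(\overline{B^+})\cap\{x^3=0\}\cap\overline{\cov_0}=\{x_0\}$ by property (2), so $\loc_+^3\ge0$ there, with equality only at $y=0$.
\item After perturbation, $(\loc_+^\pr)^3=\loc_+^3+\pr\psi$. On $\mathrm{supp}\,\psi\cap\overline{B^+}$ this is $\ge \pr\psi$, which is strictly positive near $0$ since $\psi(0)=1$.
\item Off a small ball around $0$, $\loc_+^3\ge c>0$ by compactness.
\item Away from $\cov_0$ the charts are unchanged. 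Since $\overline{\omega}$ is disjoint from the other charts and the perturbation is supported in $\loc_\pm(B_{\mathfrak r})\subset\omega$, the separation is preserved for small $\pr$.
\item Hence $\parOmega^\pr$ has no self-intersection.
\end{itemize}

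\textbf{Step 3: compatibility and the one-sided property.} Where the charts overlap, outside $B_{\mathfrak r}$, the transition maps are those of $\Omega_\ddagger$ and so are $H^s$-compatible. Inside $B_{\mathfrak r}$ only $\loc_\pm^\pr$ is active, since $\loc_\pm(B_{2\mathfrak r})\subset\omega$ and $\overline{\omega}$ misses the other charts. Each boundary chart maps $B^+$ (respectively $B^-$) into $\Omega^\pr$ and $B^\sharp$ onto $\parOmega^\pr$, so the domain lies locally on one side of its boundary.

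\textbf{Main obstacle.} The delicate point is Step 2: I need to check that the perturbed lower branch does not re-enter the upper half-region elsewhere, i.e.\ that $(\loc_+^\pr)^3>0$ uniformly away from the contact point. I expect to handle this by the compactness lower bound $\loc_+^3\ge c(\delta)$ on $\overline{B^+}\setminus B_\delta$, with $\delta$ chosen so that $\psi\ge 1/2$ on $B_\delta$, and then taking $\pr$ small relative to $c(\delta)$.
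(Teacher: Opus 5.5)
Your proposal is correct, and Step~2 is the paper's argument. The paper splits into $|y_1|,|y_2|<r_0$, where $\psi\ge\tfrac12$, and the complement, where compactness and property~(2) give a gap $\delta(r_0)$; it records this as the quantitative separation $\bigl(\loc_+^\pr(y_1)-\loc_-^\pr(y_2)\bigr)\cdot(0,0,1)^\top\ge\pr$ in \eqref{eq_verticallocal}. Your ``main obstacle'' is automatic: $\pm\pr\psi$ only pushes each branch further from $\{x^3=0\}$, so smallness of $\pr$ (the paper's $\pr\le\delta(r_0)$) is needed there only to obtain the uniform bound $\ge\pr$.

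Your Steps~1 and~3 make explicit what the paper leaves implicit after \eqref{eq_jacobian}. Two small fixes are needed. First, the reflection reduction requires $\psi$ to be even in $y^3$, which the paper does not impose, but the same argument applies directly to $(\loc_-^\pr)^3=\loc_-^3-\pr\psi$. Second, it is the trace of $\loc_\pm^\pr$, not of $\nabla\loc_\pm^\pr$, that lies in $H^{s-1/2}$.
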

\begin{proof}
We analyze the vertical distance between points in $\loc_+^\pr(B^+)$ 
and $\loc_-^\pr(B^-)$. We divide our analysis into two cases.

\vspace{2mm}
\noindent\textbf{Case 1.} By choosing a constant $r_0\in (0, 1/2)$ 
such that $\psi \ge 1/2$ in $B_{r_0}$, we see that
\begin{equation*}
\module{\Paren{\loc_+^\pr(y_1)-\loc_-^\pr(y_2)}\cdot (0,0,1)^\top}\ge \pr, 
\end{equation*} 
for any $y_1\in B^+_{r_0}$ and $y_2\in B^-_{r_0}$.

\vspace{2mm}
\noindent\textbf{Case 2.} Having fixed $r_0$, we recall from assumption 
(2) that the images of $\loc_+$ and $\loc_-$ intersect the plane 
$\Brace{x^3=x_0^3}$ only at the point $x_0$. Consequently, there exists 
a constant $\delta(r_0)>0$, independent of $\pr$, such that
\begin{equation*}
(\loc_+^\pr(y_1) -x_0)\cdot (0,0,1)^\top >\delta(r_0),\  (\loc_-^\pr(y_2)-x_0)\cdot (0,0,1)^\top <-\delta(r_0),\  \forall y_1\in B^+,y_2\in B^-,\module{y_1},\module{y_2}\ge r_0.
\end{equation*}  
This, in turn, implies that if we restrict $\pr\le\delta(r_0)$, we obtain
\begin{equation*}
\module{(\loc^\pr_+(y_1) -\loc^\pr_-(y_2))\cdot (0,0,1)^\top }\ge\delta(r_0)\ge\pr,\ y_1\in B^+,|y_1|\ge r_0,\ \text{and}\ y_2\in B^-,
\end{equation*} 
as well as
\begin{equation*}
\module{(\loc^\pr_+(y_1)-\loc^\pr_-(y_2))\cdot (0,0,1)^\top }\ge\delta(r_0)\ge\pr,\ y_1\in B^+\ \text{and}\ y_2\in B^-,\ |y_2|\ge r_0.
\end{equation*}

Combining the above cases, we conclude that for any $\pr \in (0, \delta(r_0)]$, 
the following inequality holds:
\begin{equation}\label{eq_verticallocal}
\module{(\loc^\pr_+(y_1)-\loc^\pr_-(y_2))\cdot (0,0,1)^\top }\ge\pr,\quad \forall y_1 \in B^+,y_2\in B^-.
\end{equation}
Thus, near the self-intersection point $x_0$, the boundaries of the 
approximating domain $\Omega^\pr$ remain strictly separated, ensuring 
that the region is free of self-intersections. Since the remaining 
local coordinates coincide with those of $\Omega_\ddagger$, the regularity 
of the approximating domain $\Omega^\pr$ is identical to that of $\Omega_\ddagger$.
\end{proof}

We have approximated the self-intersecting domain $\Omega_\ddagger$ 
with a family of $H^s$-class domains $\Omega^\pr$ (where $s\ge \frac92$,  and we assume 
$0<\pr\le \delta(r_0)$ hereafter) such that $\parOmega^\pr$ is free 
of self-intersections. Consequently, the local-in-time well-posedness 
theory applies to each of these approximating domains $\Omega^\pr$. Moreover, the $H^s$-norm of $\Omega^\pr$ is bounded 
independently of $\pr$. 
\begin{remark}
We remark that $\Omega^\pr$ coincides with $\Omega_\ddagger$ except on the 
two patches $\loc^\pr_-(B^-_{2\mathfrak{r}})$ and $\loc_+^\pr(B^+_{2\mathfrak{r}})$, 
where $B^\pm_{2\mathfrak{r}}\subset B^\pm_{1/2}$. In particular, since 
$\loc_\pm^\pr$ differ from $\loc_\pm$ only on a set properly contained 
in $\omega \subset \cov_0$, we can employ the same open cover 
$\{\cov_l\}_{l=0}^L$ for $\Omega^\pr$ as for $\Omega_\ddagger$. 
\end{remark}

\subsection{Uniform estimates on approximating domains}\label{sec_uniform}
A direct calculation shows that
\begin{equation}\label{eq_L2estimate}
\norm{u}_{L^{2}\Paren{\loc_\pm^\pr(B^{\sharp})}}^2=\int_{B^{\sharp}}\module{u\circ\loc_\pm^\pr}^2\module{J_\partial\loc_\pm^\pr}dx \le C\norm{u\circ\loc_\pm^\pr}_{L^{2}(B^{\sharp})}^2,
\end{equation}
where the constant $C$ depends only on $\loc_\pm$ and can be chosen independently of $\pr$ in view of the convergence $\loc^\pr_\pm\to\loc_\pm$.
Utilizing the upper and lower bounds for the Jacobian in \eqref{eq_jacobian}, 
we can apply the composition estimate (e.g., \cite[Proposition 1.6]{Disconzi2014}) 
to obtain the higher-order estimate
\begin{align}
\norm{u}_{H^{k}\Paren{\loc_\pm^\pr(B^{\sharp})}}&=\norm{u\circ\loc_\pm^\pr\circ(\loc_\pm^\pr)^{-1}}_{H^{k}\Paren{\loc_\pm^\pr(B^{\sharp})}}\nonumber\\
&\le C\norm{u\circ\loc_\pm^\pr}_{H^{k}(B^{\sharp})}\Paren{1+\norm{(\loc_\pm^\pr)^{-1}}_{H^{k}\Paren{\loc_\pm^\pr(B^{\sharp})}}^{k}},\label{eq_compo}
\end{align}
where $1 \le k \le k_0$ and the constant $C$ depends only on $k$ and 
$\operatorname{det}\Paren{\nabla\loc_\pm^\pr}$. By \eqref{eq_jacobian}, 
it depends on $k$ and $C_0$. To bound the term 
$\norm{(\loc_\pm^\pr)^{-1}}_{H^{k}\Paren{\loc_\pm^\pr(B^{\sharp})}}^{k}$, 
we recall that, in general, it holds
\begin{equation*}
\Paren{\nabla_y F^{-1}}(F(y)) =\Paren{\nabla F(y)}^{-1}= \frac{\operatorname{Cof}(\nabla F(y))}{\operatorname{det}(\nabla F(y))},
\end{equation*}
for a generic diffeomorphism $F$. Therefore,
\begin{equation*}
\Paren{\nabla\Paren{\loc_\pm^\pr}^{-1}}(\loc_\pm^\pr(y))=\frac{\operatorname{Cof}\Bracket{\nabla\loc_\pm^\pr(y)}}{\operatorname{det}\Bracket{\nabla\loc_\pm^\pr(y)}}=C_0^{-1}\operatorname{Cof}\Bracket{\nabla\loc_\pm^\pr(y)},
\end{equation*} 
and each $(j,i)$-entry takes the form
\begin{equation*}
\partial_i\Paren{\loc_\pm^\pr}_j^{-1}(x)=C_0^{-1}\Paren{\nabla\loc_\pm^\pr}\Bracket{\Paren{\loc_\pm^\pr}^{-1}(x)}\star\Paren{\nabla\loc_\pm^\pr}\Bracket{\Paren{\loc_\pm^\pr}^{-1}(x)}.
\end{equation*}
Then, we have
\begin{align*}
\partial_{il}\Paren{\loc_\pm^\pr}_j^{-1}(x)={}&C_0^{-1}\Paren{\nabla^2\loc_\pm^\pr}\Bracket{\Paren{\loc_\pm^\pr}^{-1}(x)}\star\nabla\Paren{\loc_\pm^\pr}^{-1}(x)\star\Paren{\nabla\loc_\pm^\pr}\Bracket{\Paren{\loc_\pm^\pr}^{-1}(x)}\\
={}&C_0^{-2}\Paren{\nabla^2\loc_\pm^\pr}\Bracket{\Paren{\loc_\pm^\pr}^{-1}(x)}\star\Brace{\Paren{\nabla\loc_\pm^\pr}\Bracket{\Paren{\loc_\pm^\pr}^{-1}(x)}^{\star,3}},
\end{align*}
where $T^{\star,m}$ denotes the $m$-fold $\star$-product of the tensor $T$.

By induction, for higher-order derivatives, it follows that
\begin{align*}
\nabla^k\Paren{\loc_\pm^\pr}^{-1}(x)={}&C_0^{-(k-1)}\Paren{\nabla^k\loc_\pm^\pr}\Bracket{\Paren{\loc_\pm^\pr}^{-1}(x)}\star\Brace{\Paren{\nabla\loc_\pm^\pr}\Bracket{\Paren{\loc_\pm^\pr}^{-1}(x)}^{\star,2k-1}}\\
&+C_0^{-(k-1)}\sum_{\substack{|i_1+\cdots+i_m|\le 2k-1,\\i_m\le\cdots\le i_1\le k-1}}\Paren{\nabla^{i_1}\loc_\pm^\pr}\Bracket{\Paren{\loc_\pm^\pr}^{-1}(x)}\star\cdots\star\Paren{\nabla^{i_m}\loc_\pm^\pr}\Bracket{\Paren{\loc_\pm^\pr}^{-1}(x)}.
\end{align*} 
We conclude that
\begin{equation*}
\norm{\Paren{\loc_\pm^\pr}^{-1}}_{H^{k}\Paren{\loc_\pm^\pr(B^{\sharp})}}^{k}\le C_0^{-k(k-1)} P_k\Paren{\norm{\nabla\loc_\pm^\pr}_{H^{k}(B^{\sharp})}},
\end{equation*}
for a suitable polynomial $P_k$ satisfying $P_k(0)=0$. Applying 
\eqref{eq_nablathetavarepsilon}, we obtain
\begin{equation*}
\norm{\Paren{\loc_\pm^\pr}^{-1}}_{H^{k}\Paren{\loc_\pm^\pr(B^{\sharp})}}^{k}\le C_0^{-k(k-1)}C\Paren{\norm{\nabla\loc_\pm}_{H^{k}(B^{\sharp})}}.
\end{equation*} 
Thus, by \eqref{eq_compo}, it follows that
\begin{equation}\label{equcircloc}
\norm{u}_{H^{k}\Paren{\loc_\pm^\pr(B^{\sharp})}}\le \widetilde{C}_0 \norm{u\circ\loc_\pm^\pr}_{H^{k}(B^{\sharp})},
\end{equation}
where the constant $\widetilde{C}_0$ depends only on $\Omega_\ddagger$ and is 
independent of $\pr$. Combining this with the $L^2$ estimate \eqref{eq_L2estimate} 
via interpolation, we conclude that the above inequality holds for any 
real index $\sigma\in[0,k_0]$.

Similarly, we prove that
\begin{equation}\label{eqcirclocl}
\norm{u}_{H^{\sigma}\Paren{\loc_l^\pr(B^{\sharp})}}\le \widetilde{C}_l \norm{u\circ\loc_l^\pr}_{H^{\sigma}(B^{\sharp})},\quad 1\le l\le K,
\end{equation}
where $\sigma\in[0,k_0]$ and the constant $\widetilde{C}_l$ depends only on 
$C_l$ and $\norm{\nabla\loc_l}_{H^{k}(B^{\sharp})}$. It depends solely 
on $\Omega_\ddagger$ and is therefore independent of $\pr$.

We have shown that $\norm{u}_{H^{\sigma}\Paren{\loc_\cdot^\pr(B^{\sharp})}}$ 
is bounded by $\norm{u\circ\loc_\cdot^\pr}_{H^{\sigma}(B^{\sharp})}$. Furthermore, 
the converse estimate holds by a similar argument, which conveniently 
bypasses the need to differentiate the inverse map $\Paren{\loc_\cdot^\pr}^{-1}$. 
More precisely, for $\sigma\in[0,k_0]$, we have
\begin{equation}\label{equcirclocin}
\begin{aligned}
\norm{u\circ\loc_\pm^\pr}_{H^{\sigma}(B^{\sharp})}&\le \widehat{C}_0 \norm{u}_{H^{\sigma}\Paren{\loc_\pm^\pr(B^{\sharp})}},\\
\norm{u\circ\loc_l^\pr}_{H^{\sigma}(B^{\sharp})}&\le \widehat{C}_l \norm{u}_{H^{\sigma}\Paren{\loc_l^\pr(B^{\sharp})}},
\end{aligned}
\end{equation}
where the constants $\widehat{C}_0$ and $\widehat{C}_l$ for $1\le l\le K$ 
depend only on $\Omega_\ddagger$.

\begin{lemma}\label{lemtracetheorem}
There exists a constant $C>0$, independent of $\pr>0$, such that
\begin{equation*}
\norm{u}_{H^{\sigma-\frac12}(\parOmega^\pr)}\le C\norm{u}_{H^{\sigma}(\Omega^\pr)},\quad \frac 12<\sigma\le k_0.
\end{equation*}
\end{lemma}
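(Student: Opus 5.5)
We will prove the uniform trace inequality by localizing with the fixed cover $\{\cov_l\}_{l=0}^L$, which by the preceding remark serves $\Omega^\pr$ for every $\pr$. We flatten each piece with the charts $\loc_\pm^\pr$ and $\loc_l^\pr=\loc_l$, and apply the standard trace theorem on the half-ball $B^+$ (or $B^-$). The essential point is that every constant involved depends only on $\Omega_\ddagger$, through \eqref{eq_jacobian}, \eqref{eq_nablathetavarepsilon}, \eqref{equcircloc}, \eqref{eqcirclocl} and \eqref{equcirclocin}, and never on $\pr$.

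\textbf{Step 1 (partition of unity).} Fix a smooth partition of unity $\{\chi_l\}_{l=0}^L$ subordinate to $\{\cov_l\}$. It is independent of $\pr$, so $\norm{\chi_l}_{W^{k_0,\infty}}$ is bounded uniformly. On $\cov_0$ we split $\chi_0$ further into its restrictions to the upper part $\loc_+^\pr(B^+)$ and the lower part $\loc_-^\pr(B^-)$. These pieces have disjoint closures near the boundary by \eqref{eq_verticallocal}, and their boundary portions are $\loc_\pm^\pr(B^\sharp)$.

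This split is the main subtlety. The gap between the two branches is only $\pr$, so a cutoff adapted to them would have derivatives of size $\pr^{-1}$. We avoid this by never cutting between the branches. Instead we cut in the chart variable $y$, multiplying by a cutoff $\tilde\chi(y)$ that equals $1$ on $B_{1/2}$ and is supported in $B$, composed with $(\loc_\pm^\pr)^{-1}$. The factor $\chi_0$ is already supported where $\loc_\pm^\pr$ cover. Thus on $\cov_0\cap\Omega^\pr$ the function $\chi_0u$ restricted to $\loc_+^\pr(B^+)$ only sees the upper sheet, and its trace on $\loc_+^\pr(B^\sharp)$ is determined by $u|_{\loc_+^\pr(B^+)}$. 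Similarly for the lower sheet.

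\textbf{Step 2 (trace estimates in the flattened charts).} For each boundary piece we write
\[
\norm{\chi u}_{H^{\sigma-\frac12}(\loc_\pm^\pr(B^\sharp))}\le \widetilde C_0\norm{(\chi u)\circ\loc_\pm^\pr}_{H^{\sigma-\frac12}(B^\sharp)}.
\]
This follows from \eqref{equcircloc}, which interpolation extends to real indices. The standard half-space trace theorem then gives
\[
\norm{(\chi u)\circ\loc_\pm^\pr}_{H^{\sigma-\frac12}(B^\sharp)}\le C\norm{(\chi u)\circ\loc_\pm^\pr}_{H^{\sigma}(B^\pm)}
\]
for a compactly supported function on $B^\pm$, with $\frac12<\sigma\le k_0$. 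Next we need the interior analogue of \eqref{equcirclocin} on the solid half-ball,
\[
\norm{v\circ\loc_\pm^\pr}_{H^\sigma(B^\pm)}\le \widehat C_0\norm{v}_{H^\sigma(\loc_\pm^\pr(B^\pm))}.
\]
It is proved exactly as \eqref{equcirclocin}: the chain rule combined with the uniform bounds \eqref{eq_nablathetavarepsilon} on $\nabla\loc_\pm^\pr$ in $H^{k}$, using $k_0\le s-\frac32$ so that $\nabla\loc^\pr_\pm$ lies in an algebra. This gives integers $k\le k_0$, and interpolation gives the real indices. The Leibniz rule with the uniformly bounded $\chi$ then yields $\norm{\chi u}_{H^\sigma}\le C\norm{u}_{H^\sigma(\Omega^\pr)}$. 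The charts $\loc_l$ with $1\le l\le K$ are independent of $\pr$ and are handled in the same way via \eqref{eqcirclocl}.

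\textbf{Step 3 (summation).} Summing over the finitely many pieces, whose number $L+2$ is independent of $\pr$, gives the result. For non-integer $\sigma-\frac12$, the $H^{\sigma-\frac12}(\parOmega^\pr)$ norm is defined through this localization, so the sum of local norms controls it. The step we expect to need the most care is the uniformity of the composition estimate on the solid half-balls in Step 2, which rests on the Jacobian bounds \eqref{eq_jacobian}.
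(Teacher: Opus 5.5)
Your proposal is correct and follows essentially the same route as the paper: pull back by $\loc_\pm^\pr$ and $\loc_l$, apply the trace theorem on $B^\pm$, transfer back using the $\pr$-uniform composition bounds \eqref{equcircloc}, \eqref{eqcirclocl}, \eqref{equcirclocin}, and sum over the finitely many boundary pieces. The paper skips your partition-of-unity layer and applies these bounds directly to $u$ on each piece. Your extra care only makes explicit what the paper uses implicitly: the solid half-ball version of \eqref{equcirclocin}, and the chart-based definition of the fractional norm on $\parOmega^\pr$, which avoids cross terms between the two nearly touching sheets.
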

\begin{proof} 
By the standard trace theorem on $B^\pm$, there exists a constant $C>0$, 
independent of $\pr>0$, such that for any $u\in H^\sigma(\Omega^\pr)$, 
\begin{equation*}
\norm{u\circ\loc_\pm^\pr}_{H^{\sigma-\frac12}(B^{\sharp})} \le C \norm{u\circ\loc_\pm^\pr}_{H^{\sigma}(B^\pm)},\ \norm{u\circ\loc_l^\pr}_{H^{\sigma-\frac12}(B^{\sharp})} \le C \norm{u\circ\loc_l^\pr}_{H^{\sigma}(B^+)},\quad 1\le l\le K.
\end{equation*} 
By \eqref{eq_nablathetavarepsilon} and the construction $\loc_l^\pr\equiv\loc_l$ 
for $1\le l\le K$, we see that for $\pr>0$ sufficiently small,
\begin{equation*}
\frac{1}{2}\norm{\nabla\loc_\pm}_{H^{\sigma-1}(B^\pm)}\le \norm{\nabla\loc_\pm^\pr}_{H^{\sigma-1}(B^\pm)}\le\frac{3}{2}\norm{\nabla\loc_\pm}_{H^{\sigma-1}(B^\pm)},
\end{equation*}
and
\begin{equation*}
\norm{\nabla\loc_l^\pr}_{H^{\sigma-1}(B^+)}= \norm{\nabla\loc_l}_{H^{\sigma-1}(B^+)},\quad 1\le l\le K.
\end{equation*}
Invoking \eqref{equcircloc}, \eqref{eqcirclocl}, and \eqref{equcirclocin}, 
we deduce that
\begin{align*}
\norm{u}_{H^{\sigma-\frac12}\Paren{\loc_\pm^\pr(B^{\sharp})}}&\le \widetilde{C}_0 \norm{u\circ\loc_\pm^\pr}_{H^{\sigma-\frac12}(B^{\sharp})}\le C\widetilde{C}_0 \norm{u\circ\loc_\pm^\pr}_{H^{\sigma}(B^\pm)}\le C\widetilde{C}_0\widehat{C}_0 \norm{u}_{H^{\sigma}\Paren{\loc_\pm^\pr(B^\pm)}},
\end{align*}
where the positive constant $C\widetilde{C}_0\widehat{C}_0$ depends only 
on $\Omega_\ddagger$ and is independent of $\pr>0$.

Similarly, we also obtain
\begin{equation*}
\norm{u}_{H^{\sigma-\frac12}\Paren{\loc_l^\pr(B^{\sharp})}}\le C\widetilde{C}_l\widehat{C}_l \norm{u}_{H^{\sigma}\Paren{\loc_l^\pr (B^+)}},\ 1\le l\le K.
\end{equation*}
Since $\parOmega^\pr$ is the finite union of $\loc_+^\pr(B^{\sharp})$, 
$\loc_-^\pr(B^{\sharp})$, and $\loc_l^\pr(B^{\sharp})$ for $1\le l\le K$, 
we conclude that there exists a constant $C>0$, independent of $\pr$, 
such that
\begin{equation*}
\norm{u}_{H^{\sigma-\frac12}(\parOmega^\pr)}\le C\norm{u}_{H^\sigma(\Omega^\pr)}.
\end{equation*}
This completes the proof.  
\end{proof}
\begin{remark}
This approach also yields a series of uniform estimates on the approximating 
domains, including bilinear inequalities, Sobolev embeddings, the Kato-Ponce 
inequality and elliptic estimates. In subsequent applications 
of these inequalities, we will suppress explicit mention of the independence 
of these constants from $\pr$.
\end{remark} 
\subsection{Singular states configuration}\label{sec_singularstate}

For a sufficiently small constant $\alpha>0$, we have
$\overline{\loc_-(B^-_{1-\alpha})}\subset\cov_0$ and $\overline{\loc_+(B^+_{1-\alpha})}\subset\cov_0$. Furthermore, $\overline{\loc_l( B^+_{1-\alpha})}\subset\cov_l$ for each $l\le K$, 
and $\overline{\loc_l(B_{1-\alpha})}\subset\cov_l$ for each $K+1\le l\le L$. 
Consequently, the collection of open sets
\begin{equation*}
\loc_-(B^-_{1-\alpha}),\ \loc_+(B^+_{1-\alpha}),\ \loc_l(B^+_{1-\alpha}),\ 1\le l\le K,\ \loc_l(B_{1-\alpha}),\ K+1\le l\le L,
\end{equation*} 
forms an open cover of $\Omega_\ddagger$.  Since the diffeomorphisms $\loc_\pm^\pr$ 
are merely local modifications of $\loc_\pm$ within a very small 
neighborhood of the origin, it is clear that, independently of $\pr>0$, 
the sets
\begin{equation*}
\loc_-^\pr(B^-_{1-\alpha}),\ \loc_+^\pr(B^+_{1-\alpha}),\ \loc_l(B^+_{1-\alpha}),\ 1\le l\le K,\ \loc_l(B_{1-\alpha}),\ K+1\le l\le L,
\end{equation*} 
also constitute an open cover for each $\Omega^\pr$.

Corresponding to this $\alpha>0$, we introduce a smooth cutoff function 
$\zeta\in\mathcal{D}(B(0,1))$ satisfying $0\le\zeta\le 1$, with 
$\zeta(x)=1$ for $|x|<1-\alpha$ and $\zeta(x)=0$ for $|x|\ge1-\frac{\alpha}{2}$. 
Setting $\varsigma=1-\frac{\alpha}{2}$, we deduce that
\begin{equation*} 
\zeta\in\mathcal{D}(B(0,\varsigma))\ \text{and}\ 0\le\zeta\le 1.
\end{equation*}

The singular velocity and magnetic fields $\Paren{u_\ddagger,h_\ddagger}$ on an $H^s$-class domain $\Omega_\ddagger$ satisfy the following properties (see Fig.\,\ref{f10}):
\begin{enumerate}
\item Near the boundary, we require that 
\begin{equation*}
\zeta u_\ddagger\circ\loc_\pm,\zeta h_\ddagger\circ\loc_\pm\in H^{3}(B^\pm),\ \text{and}\ \zeta  u_\ddagger\circ\loc_l,\zeta  h_\ddagger\circ\loc_l\in H^{3}(B^+),\quad 1\le l\le K.
\end{equation*}  
\item For any open set $\cov$ such that $\overline{\cov}\subset\Omega_\ddagger$, we have
\begin{equation*}
u_\ddagger,h_\ddagger\in H^{3}(\cov).
\end{equation*} 
\item Under the motion of the fluid, the sets $\cov_0^+$ and $\cov_0^-$ move toward each other. We require that
\begin{equation}\label{touch}
\Paren{u_\ddagger\cdot\normal_-}\circ\loc_->C_\pm\ \text{in}\ B^-\ \text{and}\ \Paren{u_\ddagger\cdot\normal_+}\circ\loc_+>C_\pm\ \text{in}\ B^+, 
\end{equation} 
where the constant $C_\pm>0$, and we recall that the normal vector field is extended via harmonic extension.
\end{enumerate}

Note that we only define the velocity in the interior domain and do not prescribe its boundary value. At the self-intersection point $x_0$, the velocity field $u_\ddagger$, and hence the boundary velocity $\bdv$, should be understood in terms of the two one-sided traces. The same convention applies to the magnetic field. Since the one-sided normal vectors at $x_0$ are collinear, the tangential magnetic boundary condition remains compatible and it is therefore imposed as follows:
\begin{equation*} 
\begin{cases}
\Paren{h_\ddagger\circ\loc_\pm}(0)\cdot\normal=0,&\text{where}\ x_0=\loc_\pm(0),\\
\Paren{\zeta h_\ddagger\cdot\normal}\circ\loc_\pm=0,  &\text{on}\ B^{\sharp}\setminus\{0\},  \\
\Paren{\zeta h_\ddagger\cdot\normal}\circ\loc_l=0,\ l=1,\ldots,K,  &\text{on}\ B^{\sharp}.  
\end{cases}
\end{equation*}  
Formally, on $\Omega_\ddagger$, the pressure $p_\ddagger$ associated with the velocity $u_\ddagger$ and magnetic field $h_\ddagger$ solves the following problem:
\begin{equation*}
-\lpe p_\ddagger =\partial_j u_\ddagger^i\partial_iu_\ddagger^j-\partial_j h_\ddagger^i\partial_ih_\ddagger^j,\quad  \text{in} \ \Omega_\ddagger.
\end{equation*}
As for the boundary condition, it is natural to prescribe
\begin{equation}\label{splashpressureb1}
\begin{cases}
\zeta p_\ddagger\circ\loc_\pm=\mc,  &\text{on}\ B^{\sharp}\setminus\{0\},  \\
\zeta p_\ddagger\circ\loc_l=\mc,\ l=1,\ldots,K,  &\text{on}\ B^{\sharp}.  
\end{cases}
\end{equation} 
However, at the self-intersection point $x_0$, the condition $p_\ddagger=\mc$ cannot be imposed in the usual sense. Although $x_0=\loc_\pm(0)$ and, under the symmetric configuration, the one-sided mean curvatures satisfy $\mc^+=\mc^-$, the relation
\begin{equation}\label{splashpressureb2}
\mc^+=p_\ddagger\circ\loc_+(0)=p_\ddagger(x_0)=p_\ddagger\circ\loc_-(0)=\mc^-
\end{equation}
does not define a classical boundary condition at $x_0$. Indeed, the self-intersecting boundary is no longer an embedded hypersurface, and hence there is no single normal vector or mean curvature at the contact point. The obstruction is therefore not the equality of the one-sided scalar traces, but the absence of a single geometric boundary structure on which the boundary condition can be imposed. Thus, conditions \eqref{splashpressureb1} and \eqref{splashpressureb2} are not equivalent to the usual condition $p_\ddagger=\mc$ on an embedded free surface. This contrasts with the case without surface tension (e.g., \cite{Coutand2014}), where the boundary condition is simply the vanishing of the pressure and its local-coordinate formulation remains consistent with the entire boundary condition.

To overcome this difficulty, we separate the self-intersection point and impose the boundary velocity and pressure conditions on each approximating boundary. The pressure $p_\ddagger$ is then defined by passing to the limit as $\pr\to 0$. 

For $\pr>0$, 
we construct a sequence of approximations $u_\ddagger^\pr,h_\ddagger^\pr:\Omega^\pr\to\mathbb{R}^3$ by means of the Piola transform. Let $\Phi^\pr$ denote the $H^s$ map from $\Omega_\ddagger\setminus\{x_0\}$ to $\Omega^\pr$ determined by $\Phi^\pr\circ\loc_\pm=\loc_\pm^\pr$ on the distinguished charts and by $\Phi^\pr=\operatorname{Id}$ away from the modified neighborhood. Since the perturbation is supported inside the distinguished patches, these definitions agree on the chart overlaps. We set
\begin{equation}\label{usepsilon} 
u_\ddagger^\pr(\Phi^\pr(y))=J_{\Phi^\pr}(y)^{-1}\nabla\Phi^\pr(y)u_\ddagger(y),\quad
h_\ddagger^\pr(\Phi^\pr(y))=J_{\Phi^\pr}(y)^{-1}\nabla\Phi^\pr(y)h_\ddagger(y),
\end{equation} 
where $J_{\Phi^\pr}=\det\nabla\Phi^\pr$. This Piola transform gives $\divergence u_\ddagger^\pr=\divergence h_\ddagger^\pr=0$ in $\Omega^\pr$ and preserves the normal magnetic trace $h_\ddagger^\pr\cdot\normal^\pr=0$ on $\partial\Omega^\pr$.
 
We next define the approximate pressure $p^\pr_\ddagger$ as the $H^1(\Omega^\pr)$ 
weak solution of
\begin{equation*}
\begin{cases}
-\lpe p^\pr_\ddagger=\partial_j{u^\pr_\ddagger}^i\partial_i{u^\pr_\ddagger}^j-\partial_j{h^\pr_\ddagger}^i\partial_i{h^\pr_\ddagger}^j,\quad &\text{in}\ \Omega^\pr,\\
p^\pr_\ddagger =\mc^\pr,\quad &\text{on}\ \partial\Omega^\pr.
\end{cases}
\end{equation*}
Standard elliptic regularity theory then shows that $p^\pr_\ddagger\in H^{\frac52}(\Omega^\pr)$. Furthermore, since 
\begin{equation*}
\loc_\pm^\pr\rightarrow \loc_\pm,\ \text{and}\ \loc_l^\pr\rightarrow \loc_l\ \text{in}\ H^s \ \text{with}\ s\ge\frac92,\ \text{as}\ \pr\to 0,
\end{equation*} 
we infer from the definition of $u_\ddagger^\pr$ in (\ref{usepsilon}) that 
\begin{equation*}
\zeta \Paren{u_\ddagger^\pr,h_\ddagger^\pr}\circ\loc_\pm^\pr \rightarrow \zeta  \Paren{u_\ddagger,h_\ddagger}\circ\loc_\pm,\ \text{and}\ \zeta \Paren{u_\ddagger^\pr,h_\ddagger^\pr}\circ\loc_l^\pr \rightarrow \zeta  \Paren{u_\ddagger,h_\ddagger}\circ\loc_l\ \text{in}\ H^3\ \text{as}\ \pr\to 0.
\end{equation*} 
Moreover, the mean curvature $\mc^\pr$ of the approximate domains $\Omega^\pr$ converges in $H^2$ as $\pr\to 0$, because $\loc^\pr\to\loc$ in $H^{\frac92}$ and the curvature map from the boundary chart to $H^2$ is continuous at this regularity. By the uniform elliptic estimates on $\Omega^\pr$ and the stability of the corresponding Dirichlet problems under the convergence of the coordinate maps and data, we obtain 
\begin{equation*}
\zeta p^\pr_\ddagger\circ\loc_\pm^\pr \rightarrow \zeta p_\ddagger\circ\loc_\pm,\  \text{and}
\ \zeta p^\pr_\ddagger\circ\loc_l^\pr \rightarrow \zeta p_\ddagger\circ\loc_l\ \text{in}\ H^{\frac52}.
\end{equation*}

\section{Existence of the self-intersection singularity}\label{sec5} 
\subsection{Backward-in-time well-posedness}
Because the ideal MHD equations are time-reversible, on each approximate fluid domain $\Omega^\pr$, we can solve \eqref{eq_MHD} backward in time:
\begin{equation*}
\begin{cases}
\DT u^\pr-h^\pr\cdot\nabla h^\pr+\nabla p^\pr=0,&\text{in}\ \Omega_t^\pr, \\ 
\DT h^\pr=h^\pr\cdot\nabla u^\pr,&\text{in}\ \Omega_t^\pr,  \\ 
\divergence u^\pr=0,\quad \divergence h^\pr=0, &\text{in}\ \Omega_t^\pr,  \\ 
u^\pr_n=\bdv,\quad h^\pr\cdot\normal=0,\quad p^\pr=\mc,  &\text{on}\  \parOmega_t^\pr,  \\ 
u^\pr(\cdot,0)=u_\ddagger^\pr,\quad h^\pr(\cdot,0)=h_\ddagger^\pr, &\text{in}\ \Omega_0^\pr,
\end{cases}
\end{equation*} 
where $t<0$, and the domain $\Omega^\pr_0=\Omega^\pr$ denotes the final-time ($t=0$) domain.

Let the modified variables $(v^\pr,b^\pr,q^\pr)$ be defined as
\begin{equation}\label{eq_changeve}
\begin{cases}
v^\pr(x,t)=u^\pr(-x,-t),\\
b^\pr(x,t)=h^\pr(-x,-t),\\
q^\pr(x,t)=p^\pr(-x,-t),
\end{cases}
\end{equation} 
where $x\in \Xi_t^\pr$ and $t>0$. Here, the domain is
\begin{equation*}
\Xi_t^\pr=\Brace{(-x^1,-x^2,-x^3):(x^1,x^2,x^3)\in \Omega^\pr_{t} },
\end{equation*} 
and $\Xi_0^\pr=-\Omega^\pr=-\Omega^\pr_0$. Meanwhile, for the initial velocity and magnetic fields, we define 
\begin{equation*}
v_\ddagger^\pr(x)=u_\ddagger^\pr(-x),\quad b_\ddagger^\pr(x)=h_\ddagger^\pr(-x).
\end{equation*}
See Fig.\,\ref{f11}. 

Then the modified velocity field, magnetic field, and pressure solve 
\begin{subnumcases}
{\label{eqEulerbackmodi}}
\partial_t v^\pr+v^\pr\cdot\nabla v^\pr-b^\pr\cdot\nabla b^\pr+\nabla q^\pr=0,& \text{in}\ \Xi_t^\pr,\label{eqEulerbackmodi1}\\ 
\partial_t b^\pr+v^\pr\cdot\nabla b^\pr-b^\pr\cdot\nabla v^\pr=0,& \text{in}\ \Xi_t^\pr,\label{eqEulerbackmodi2}\\ 
\divergence v^\pr=0,\quad \divergence b^\pr=0, & \text{in}\ \Xi_t^\pr,\\ 
v^\pr_n=\bdv^\pr,\quad b^\pr\cdot\normal=0,\quad q^\pr=\mc^\pr,   & \text{on}\  \parXi_t^\pr,\label{eqEulerbackmodi4}\\ 
v^\pr(\cdot,0)=v_\ddagger^\pr,\quad b^\pr(\cdot,0)=b_\ddagger^\pr, & \text{in}\ \Xi_0^\pr,\label{eqEulerbackmodi5}
\end{subnumcases} 
since 
\begin{align*}
\operatorname{LHS}\ \text{of}\ \eqref{eqEulerbackmodi1} 
={}&-\Paren{\partial_t u^\pr}(-x,-t)-u^\pr(-x,-t)\cdot\Paren{\nabla u^\pr}(-x,-t)\\
&+h^\pr(-x,-t)\cdot\Paren{\nabla h^\pr}(-x,-t)-\Paren{\nabla p^\pr}(-x,-t)=0, \\
\operatorname{LHS}\ \text{of}\ \eqref{eqEulerbackmodi2} ={}&-\Paren{\partial_t h^\pr}(-x,-t)-u^\pr(-x,-t)\cdot\Paren{\nabla h^\pr}(-x,-t)\\
&+h^\pr(-x,-t)\cdot\Paren{\nabla u^\pr}(-x,-t)=0, \\
\divergence v^\pr(x,t)={}&-\Paren{\divergence u^\pr}(-x,-t)=0,\\
\divergence b^\pr(x,t)={}&-\Paren{\divergence h^\pr}(-x,-t)=0, 
\end{align*}  
in $\Xi_t^\pr$, 
\begin{align*} 
v^\pr_n(x,t)={}&-u^\pr(-x,-t)\cdot n(-x,-t)=-\bdv(-x,-t)=\bdv^\pr(x,t),\\
b^\pr(x,t)\cdot\normal(x,t)={}&-h^\pr(-x,-t)\cdot n(-x,-t)=0,\\
q^\pr(x,t)={}&p^\pr(-x,-t)=\mc(-x,-t)=\mc^\pr(x,t), 
\end{align*}
on $\parXi_t^\pr$, and
\begin{align*}
v^\pr(x,0)&=u^\pr(-x,0)=u_\ddagger^\pr(-x)=v_\ddagger^\pr(x),\\
b^\pr(x,0)&=h^\pr(-x,0)=h_\ddagger^\pr(-x)=b_\ddagger^\pr(x),
\end{align*}
in $\Xi_0^\pr$.
 
Since $\Omega^\pr$ is simply connected, applying the local existence theory \cite[Theorem 2.1]{Liu2023} with the vacuum magnetic field set to zero yields, for each fixed $\pr$, a time $T^\pr>0$ such that
\begin{equation*}
v^\pr,b^\pr\in C^0 (0,T^\pr;H^3(\Xi_t^\pr)),\quad \parXi_t^\pr\in C^0 (0,T^\pr;H^4)
\end{equation*}  
and $\Paren{v^\pr,b^\pr,\Xi_t^\pr}$ solves system \eqref{eqEulerbackmodi} with initial data $\Paren{v_\ddagger^\pr,b_\ddagger^\pr,\Xi_0^\pr}$. 

In the remainder of the section, we will prove that the existence time $T^\pr>0$ has a lower bound $T_\star>0$ independent of $\pr$ and that 
$\norm{v^\pr(\cdot,t)}_{H^3(\Xi^\pr_t)},\norm{b^\pr(\cdot,t)}_{H^3(\Xi^\pr_t)}$ and $\norm{\parXi_t^\pr}_{H^{4}}$ are bounded on $[0,T_\star]$ independently of $\pr$. This will then yield a solution
\begin{equation*}
\Paren{u^\pr(x,t)=v^\pr(-x,-t),h^\pr(x,t)=b^\pr(-x,-t),\Omega_t^\pr=-\Xi^\pr_t}
\end{equation*} on $[-T_\star,0]$ which culminates in the singularity $\Omega_\ddagger$ at $t=0$, starting from the initial data 
\begin{equation*} 
u_{-T_\star}=\lim_{\pr\rightarrow 0} u^\pr (\cdot,-T_\star),\quad h_{-T_\star}=\lim_{\pr\rightarrow 0} h^\pr (\cdot,-T_\star),\quad \Omega_{-T_\star} =\lim_{\pr\rightarrow 0} \Omega^\pr_{-T_\star},
\end{equation*}  
in a suitable limit sense.  

We recall the energy functionals defined in \eqref{eq_Energy} and \eqref{eq_energy}, and for $\Paren{v^\pr,b^\pr,\Xi_t^\pr}$, we denote
\begin{align*}
\energy^\pr(t)\coloneqq\frac12&\left(\norm{\DT^2v^\pr}_{L^2(\Xi_t^\pr)}^2+\norm{\DT^2b^\pr}_{L^2(\Xi_t^\pr)}^2+\norm{\Bdnabla\Paren{\DT v^\pr\cdot\normal}}_{L^2(\parXi_t^\pr)}^2\right.\nonumber\\
&\quad\left.+\norm{\nabla^2\Paren{\vorticity v^\pr}}_{L^2(\Xi_t^\pr)}^2+\norm{\nabla^2\Paren{\vorticity b^\pr}}_{L^2(\Xi_t^\pr)}^2\right).\\
\Energy^\pr(t)\coloneqq{}&\norm{\DT^2v^\pr}_{L^2(\Xi_t^\pr)}^2+\norm{\DT^2b^\pr}_{L^2(\Xi_t^\pr)}^2+\norm{\DT v^\pr}_{H^{\frac32}(\Xi_t^\pr)}^2+\norm{\DT b^\pr}_{H^{\frac32}(\Xi_t^\pr)}^2\nonumber\\
&+\norm{v^\pr}_{H^3(\Xi_t^\pr)}^2+\norm{b^\pr}_{H^3(\Xi_t^\pr)}^2+\norm{\Bdnabla\Paren{\DT v^\pr\cdot\normal}}_{L^2(\parXi_t^\pr)}^2+1.
\end{align*}
We also recall the a priori quantities
\begin{equation*}
\assone^\pr_{T^\pr}\coloneqq \radi^\pr-\sup_{t\in[0,T^\pr)}\norm{\eta^\pr(\cdot,t)}_{L^\infty(\Gamma^\pr)},
\end{equation*}
and 
\begin{equation*}
\asstwo^\pr_{T^\pr}\coloneqq\sup_{t\in[0,T^\pr)}\bigg(\norm{\nabla v^\pr}_{L^{\infty}(\Xi^\pr_t)}+\norm{\nabla b^\pr}_{L^{\infty}(\Xi^\pr_t)}+\norm{\nabla^2 b^\pr}_{L^{2}(\Xi^\pr_t)} +\norm{\eta^\pr(\cdot,t)}_{H^{\frac52}(\Gamma^\pr)}+\norm{v^\pr_n}_{H^{2}(\parXi^\pr_t)}\bigg).
\end{equation*}
Here, we choose $\Gamma^\pr=\parXi_0^\pr$ for simplicity. $\radi^\pr>0$ denotes the uniform exterior and interior ball radius of $\Xi_0^\pr$, and
$\eta^\pr$ denotes the height function defined on the reference surface $\Gamma^\pr$. With $\eta^\pr$ in hand, we can express the free boundary $\parXi_t^\pr$ as
\begin{equation*}
\parXi_t^\pr=\Brace{x+\eta^\pr(x,t)\normal(x,t):x\in \Gamma^\pr},
\end{equation*} 
and we also denote
\begin{equation*}
\assone^\pr_{0}\coloneqq \radi^\pr-\norm{\eta^\pr(\cdot,0)}_{L^\infty(\Gamma^\pr)}.
\end{equation*}
\begin{proposition}\label{pro_5.1}
There are constants $C_\star>0$ and $T_\star>0$ independent of $\pr$ such that, on the time interval $[0,T_\star]$, the free boundary $\parXi^\pr_t$ does not self-intersect, and
\begin{equation*} 
\sup_{t\in[0,T_\star]}\Paren{\Energy^\pr(t)+\norm{q^\pr}_{H^{\frac {5}{2}}(\Xi^\pr_t)}^2+\norm{\sff^\pr}_{H^{2}(\parXi^\pr_t)}^2}\le C_\star.
\end{equation*} 
\end{proposition}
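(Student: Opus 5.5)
The plan is a bootstrap (continuity) argument modeled on Step 2 of the proof of Theorem \ref{thm_main1}, carried out on the approximating domains $\Xi_t^\pr$ with every constant tracked to be independent of $\pr$. The uniform chart estimates of Section \ref{sec_uniform} (trace, composition, Sobolev, Kato--Ponce and elliptic estimates) ensure that the constants in Propositions \ref{lem_ddt}, \ref{lem_int0TnablapH1}, \ref{pro_E(0)} and \ref{pro_Elee} depend only on $\Omega_\ddagger$ and on the bootstrap bounds.

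\textbf{Initial data.} First I will check that $\Energy^\pr(0)$, $\norm{q^\pr(0)}_{H^{5/2}}$ and $\norm{\sff^\pr(0)}_{H^2}$ are bounded uniformly in $\pr$. By Proposition \ref{pro_E(0)}, these are controlled by $\norm{v_\ddagger^\pr}_{H^3}$, $\norm{b_\ddagger^\pr}_{H^3}$ and $\norm{\mc^\pr}_{H^2}$. Each of these is uniformly bounded, by the $H^3$ convergence of the Piola-transformed fields and the $H^2$ convergence of $\mc^\pr$ established in Section \ref{sec_singularstate}.

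\textbf{Bootstrap interval.} I will define $T^\pr_0$ as the largest time in $[0,\min(T^\pr,1)]$ on which
\begin{equation*}
\Lambda^\pr(t)\le 2\Lambda^\pr(0),\qquad \energy^\pr(t)\le 1+\energy^\pr(0),
\end{equation*}
and on which the boundary does not self-intersect.

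\textbf{Replacing the global radius.} The point requiring care is that the global ball radius $\radi^\pr$ of $\Xi^\pr_0$ is of order $\pr$, so the quantity $\assone^\pr$ cannot serve as a uniform a priori bound. I will therefore not use a single global height function. Instead I will represent $\parXi^\pr_t$ locally over the fixed charts $\loc^\pr_\pm,\loc_l$. Each chart has a one-sided ball radius bounded below independently of $\pr$, so Lemma \ref{lem_boudr}, Lemma \ref{lem_sffcontrolledbymc} and the div--curl estimates are applied chart by chart. These are local elliptic statements, so only local embeddedness and a local $H^{5/2}$ graph bound are needed; both follow as in \eqref{eq_boundh} from $\norm{\mc^\pr}_{H^{1/2}}$, which is bounded via $\Lambda^\pr$. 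Global embeddedness is then needed only to exclude contact between the two branches $\loc^\pr_\pm(B^\sharp)$ near $-x_0$; away from there, contact is excluded by a distance bound of order one together with $\norm{v_n}_{L^\infty}\le C$.

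With these modifications, Cases 1 and 2 of Step 2 of the proof of Theorem \ref{thm_main1} go through verbatim and give the lower bound $T^\pr_0\ge c_0$ unless the no-contact condition is the one that fails.

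\textbf{Main obstacle: no contact near $-x_0$.} The remaining task, and the main obstacle, is to show that the two branches near $-x_0$ do not touch for $t\le T_\star$. After the reversal \eqref{eq_changeve}, the condition \eqref{touch} says that the normal velocities of the two branches initially point so that the branches separate, each with magnitude at least $C_\pm$. It therefore suffices to prove the Hölder estimate
\begin{equation*}
\norm{v^\pr_n(x+\eta^\pr(x,t)\normal^\pr(x,0),t)-v^\pr_n(x,0)}_{L^\infty}\le Ct^\alpha .
\end{equation*}
Once this holds, the normal velocity stays at least $C_\pm/2$ for $t\le (C_\pm/2C)^{1/\alpha}$. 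The vertical gap, which is at least $\pr$ by \eqref{eq_verticallocal}, is then nondecreasing, so it never closes.

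A direct $L^\infty$ bound on $\DT v$ is unavailable, since $H^{3/2}$ does not embed into $L^\infty$ in three dimensions. I would instead interpolate. On one side, $\DT v\in H^{3/2}$ bounds $v(\cdot,t)-v(\cdot,0)$ in $H^{3/2}$, hence in $L^2$, by $Ct$ along particle paths. On the other side, $\norm{v}_{H^3}\le C$ gives a uniform bound in $C^{1,1/2}$. The Gagliardo--Nirenberg inequality between $L^2$ and $C^{1,1/2}$ then yields an $L^\infty$ bound $Ct^\alpha$ with some $\alpha\in(0,1)$.

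The normal enters in the same way: $\normal^\pr(t)-\normal^\pr(0)$ is controlled through \eqref{eq_DTn} by $Ct$ in $L^\infty$, since $\norm{\nabla v}_{L^\infty}\le C$. The displacement of the base point along the normal line, compared with the particle path, is controlled by $|\eta^\pr|\le Ct$ together with the Lipschitz bound on $v$.

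\textbf{Conclusion.} Taking $T_\star=\min\bigl(c_0,(C_\pm/2C)^{1/\alpha}\bigr)$, the bootstrap closes and Theorem \ref{thm_main1} (in its localized form) gives the uniform bound $C_\star$. Finally, the standard continuation criterion (Theorem \ref{thm_main2}) gives $T^\pr\ge T_\star$.
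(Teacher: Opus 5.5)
Your proposal is correct, and it departs from the paper's proof mainly in how it handles the near-contact geometry.

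\textbf{The paper's route.} The paper runs Step 2 of the proof of Theorem \ref{thm_main1} verbatim. It keeps the global height function $\eta^\pr$ over $\Gamma^\pr=\parXi^\pr_0$ and the bootstrap condition $\assone^\pr_t\ge\assone^\pr_0/2$. It proves the H\"older bound for $v^\pr_n$ by interpolating between $H^1$ and $H^2$ on the surface $\parXi^\pr_0$, along the normal-line parametrization $x\mapsto x+\eta^\pr(x,t)\normal^\pr(x,0)$. The $H^1$ difference there is controlled through the traces of $\DT v^\pr\in H^{3/2}$ and of $v^\pr\cdot\nabla v^\pr$. The paper then argues that $\assone^\pr$ cannot decrease before the turning time.

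\textbf{Your route.} You drop $\assone^\pr$ and use local height functions over the fixed charts, together with an explicit no-contact condition near $-x_0$. You obtain the H\"older bound along particle paths by interpolating $L^2$ against $C^{1,1/2}$; in $L^2$ the difference is at most $\int_0^t\norm{\DT v^\pr}_{L^2}d\tau\le Ct$. This needs neither the normal-line parametrization nor boundary norms of $\partial_t v^\pr$.

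\textbf{What your route buys.} It addresses exactly the point you flag. Since $\radi^\pr\lesssim\pr$, the literal condition $\assone^\pr_t\ge\assone^\pr_0/2$ forces $\norm{\eta^\pr(\cdot,t)}_{L^\infty}\le\radi^\pr/2$. A boundary moving with normal speed $\gtrsim C_\pm$ violates this within time $O(\pr)$, whatever the direction of motion. So the paper's Case 3 can only be read as a statement about the separation of the two branches. Your monotone vertical gap, combined with an $O(1)$ separation elsewhere, makes that precise. You also supply the continuation step $T^\pr\ge T_\star$ via Theorem \ref{thm_main2}, which the paper leaves implicit.

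\textbf{The cost.} Every estimate of Section \ref{sec3} that is phrased through $\assone_T$ and $\norm{\eta}_{H^{5/2}(\Gamma)}$ must be rerun with the local height functions. This includes the harmonic extension of $\normal$, Lemma \ref{lem_boudr}, and the div--curl estimates.

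Two clarifications on your write-up:
\begin{enumerate}
\item The uniform quantity you need for each chart is the tubular radius of the patch viewed by itself, which is a curvature bound. The exterior ball radius near $-x_0$ is only $O(\pr)$, so a generic ``one-sided ball radius'' is not the right quantity to invoke.
\item The no-contact condition should be stated in closed form, for example vertical gap at least $\pr$ near $-x_0$, so that a largest bootstrap time exists.
\end{enumerate}
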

\begin{proof} 
We define the following auxiliary quantity in order to apply the arguments in \textbf{Step 2} of the proof of Theorem \ref{thm_main1} (cf. Section \ref{sec_profthm12}):
\begin{equation*}
\Lambda^\pr(t) \coloneqq \norm{\sff^\pr}_{L^4(\parXi^\pr_t)}^4+\norm{\nabla q^\pr}_{L^2(\Xi^\pr_t)}^2+\norm{\nabla v^\pr}_{L^4(\Xi^\pr_t)}^4+\norm{\nabla b^\pr}_{L^4(\Xi^\pr_t)}^4+1,\quad t\ge 0.
\end{equation*} 
As in \eqref{eq_time_T0}, we define $T^\pr_0 \in (0, 1]$ to be the largest number such that
\begin{equation}\label{e:time_T0}
[0,T^\pr_0]\subset\left\lbrace t\in[0,1]: \Lambda^\pr(t)\le 2\Lambda^\pr(0),\assone^\pr_t \ge \assone^\pr_0/2,\ \text{and}\ \energy^\pr(t)\le 1+\energy^\pr(0)\right\rbrace.
\end{equation}
Then, by \eqref{eqEulerbackmodi4}, \eqref{eq_Hfrac121}, and the first condition in 
\eqref{e:time_T0}, we have
\begin{equation*}
\norm{\mc^\pr}_{H^{\frac 12}(\parXi_t^\pr)}^2\le C\Paren{\norm{\sff^\pr}_{L^2(\parXi_t^\pr)}^2+\norm{\nabla q^\pr}_{L^2(\Xi_t^\pr)}^2}\le C\Paren{\Lambda^\pr(0)}.
\end{equation*} 
We will apply the regularity result in Lemma \ref{lem_boudr} to recover the regularity of the height function $\eta^\pr$. The key point is that the Sobolev bound of $\norm{\parXi^\pr}_{H^s}$ is independent of the boundary distance of $\parXi^\pr$, which is directly related to the fact that curvature is a local property. In other words, the boundary can be arbitrarily close to self-intersection.
Thus, applying Lemma \ref{lem_boudr}, we have 
\begin{equation*} 
\norm{\eta^\pr(\cdot,t)}^2_{H^{\frac52}(\Gamma)}\le C\Paren{\Lambda^\pr(0)}.
\end{equation*}   
As in \eqref{eq_NT0le}, we can also obtain that
\begin{equation*}
\asstwo^\pr_{T^\pr_0}\le  C\Paren{\Lambda^\pr(0)}+\sup_{t\in[0,T^\pr_0)}\Energy^\pr(t)+\sup_{t\in[0,T^\pr_0)}\norm{v^\pr_n}_{H^{2}(\parXi^\pr_t)}.
\end{equation*}

To control the initial data $\Lambda^\pr(0)$, we revisit the proof of Proposition \ref{pro_E(0)} and apply the arguments in Section \ref{sec_uniform}. The constants appearing in the Kato-Ponce inequality, the elliptic estimate, Lemma \ref{lem_sffcontrolledbymc}, and the trace theorem can be shown, using local coordinates, to be independent of the boundary distance $\assone^\pr_{t}$ and of $\pr$. As a result, we can apply the modified Proposition \ref{pro_E(0)} to obtain
\begin{equation*}
\Lambda^\pr(0)\le C^\pr_{\operatorname{initial}} ,
\end{equation*}
where 
\begin{equation*}
C^\pr_{\operatorname{initial}}=f\Paren{\norm{v_\ddagger^\pr}_{H^3(\Xi_0)}, 
\norm{b_\ddagger^\pr}_{H^3(\Xi_0)},\norm{\mc^\pr}_{H^2(\parXi_0)}}
\end{equation*} 
and the function $f$ is independent of $\pr$. Recalling from \eqref{usepsilon} in Section \ref{sec_singularstate} that $C^\pr_{\operatorname{initial}}$ can be uniformly bounded by a constant $C_\star$ independent of $\pr$, we obtain
\begin{equation*}
\norm{\mc^\pr}_{H^{\frac 12}(\parXi_t^\pr)}^2+\norm{\eta^\pr(\cdot,t)}^2_{H^{\frac52}(\Gamma^\pr)}\le C(C_\star).
\end{equation*}
Similarly, we can modify the proof of
\eqref{eq_BHfrac32} to obtain 
\begin{equation*}
\norm{v^\pr_n}_{H^{2}(\parXi_t^\pr)}\le C\Paren{\Energy^\pr(t),C_\star}.
\end{equation*}
as in \eqref{eq_unh2}.
As a consequence, it follows that
\begin{equation*}
\asstwo^\pr_{T^\pr_0}\le C\Paren{\sup_{t\in[0,T^\pr_0)}\Energy^\pr(t),C_\star}.
\end{equation*}
By the conditions in \eqref{e:time_T0} and $\Lambda^\pr(0)\le C_\star$, 
combining the arguments in Section \ref{sec_uniform} with the proof of Proposition \ref{pro_Elee} allows us to obtain
\begin{equation}\label{eq_mainthmEleepr}
\Energy^\pr(t)\le C(C_\star)\Paren{1+\energy^\pr(t)}.
\end{equation}  
We note that the last condition in \eqref{e:time_T0}, \eqref{eq_mainthmEleepr}, together with the modified Proposition \ref{pro_E(0)} implies that 
\begin{equation}\label{eq_mainthmsupEpr}
\sup_{t\in[0,T^\pr_0)}\Energy^\pr(t)\le C\Paren{2+\energy^\pr(0)}\le C\Energy^\pr(0)\le C(C_\star).
\end{equation} 

Combining the above analysis, we conclude that
\begin{equation}\label{eq_NT0pr} 
\asstwo^\pr_{T^\pr_0} \le C(C_\star).
\end{equation} 
Since the a priori assumptions \eqref{eq_aprioriassumption} hold for time 
$T=T^\pr_0$, the claim follows once we show that $T^\pr_0$ specified in \eqref{e:time_T0} 
has a lower bound $c_0>0$. From the definition of $T^\pr_0$, at least one of 
the three conditions is satisfied with equality. 

Let us assume that $\Lambda^\pr(T^\pr_0) = 2\Lambda^\pr(0)$. Since the Reynolds transport formulas \eqref{eq_RT1} and \eqref{eq_RT2} and the commutator formula \eqref{eq_Dt,nabla} are independent of the boundary distance $\assone^\pr_{t}$, and since the constants in the normal trace theorem and the trace theorem have been proved to be independent of $\pr$ in Lemma \ref{lemtracetheorem}, we can repeat the arguments in the proof of \eqref{eq_LambdaleELambda} to obtain
\begin{equation}\label{eq_LambdaleELambdapr}
\frac{d}{dt}\Lambda^\pr(t)\le C\Energy^\pr(t) \Lambda^\pr(t), 
\end{equation}  
where the constant $C$ is independent of $\pr$. Integrating \eqref{eq_LambdaleELambdapr} over $(0,T^\pr_0)$ and using 
$\Lambda^\pr(T^\pr_0) = 2 \Lambda^\pr(0)$, we obtain
\begin{equation*}
\ln 2=\ln \Lambda^\pr(T^\pr_0)-\ln \Lambda^\pr(0)\le C(C_\star)T^\pr_0.
\end{equation*}
This yields
\begin{equation*}
T^\pr_0\ge c_0,
\end{equation*}
where the constant $c_0$ depends only on the initial data.

A similar argument applies if we have an equality in the third condition, i.e.,
\begin{equation*}
\energy^\pr(T^\pr_0)= 1+\energy^\pr(0).
\end{equation*}  
In this case, the a priori assumptions hold by \eqref{eq_NT0pr}, whereby 
we can apply the a priori estimates and \eqref{eq_mainthmsupEpr} to obtain
\begin{equation*}
\frac{d}{dt}\energy^\pr(t)\le C(C_\star).
\end{equation*}
Integrating over $(0,T^\pr_0)$ gives
\begin{equation*}
1=\energy^\pr(T^\pr_0)-\energy^\pr(0)\le C(C_\star)T^\pr_0,
\end{equation*}
which again results in
\begin{equation*}
T^\pr_0 \ge c_0>0.
\end{equation*}

Finally, assume that $\assone^\pr_{T^\pr_0} = \assone^\pr_0/2$ is the condition in \eqref{e:time_T0} that is satisfied with equality. Then, by \eqref{eq_mainthmsupEpr} and \eqref{eq_NT0pr}, we have
\begin{equation*}
\sup_{t\in[0,T^\pr_0)}\Paren{\Energy^\pr(t)+\norm{v^\pr_n}_{H^{2}(\parXi^\pr_t)}}\le C(C_\star).
\end{equation*} 
Note that
\begin{align*}
&v_n^\pr\Paren{x+\eta^\pr(x,t)\normal^\pr(x,0),t}-v_n^\pr(x,0)\\
={}&\Bracket{v^\pr\Paren{x+\eta^\pr(x,t)\normal^\pr(x,0),t}-v^\pr(x,0)}\cdot\normal^\pr\Paren{x+\eta^\pr(x,t)\normal^\pr(x,0),t}\\
&+v^\pr(x,0)\cdot\Bracket{\normal^\pr\Paren{x+\eta^\pr(x,t)\normal^\pr(x,0),t}-\normal^\pr(x,0)},
\end{align*}
and we deduce that
\begin{align*}
&\norm{v_n^\pr\Paren{x+\eta^\pr(x,t)\normal^\pr(x,0),t}-v_n^\pr(x,0)}_{L^\infty_x(\parXi^\pr_0)}\\
\le{}& \norm{v^\pr\Paren{x+\eta^\pr(x,t)\normal^\pr(x,0),t}-v^\pr(x,0)}_{L^\infty_x(\parXi^\pr_0)}\\
&+\norm{v^\pr(x,0)\cdot\Bracket{\normal^\pr\Paren{x+\eta^\pr(x,t)\normal^\pr(x,0),t}-\normal^\pr(x,0)}}_{L^\infty_x(\parXi^\pr_0)}.
\end{align*}
By applying the interpolation inequality, for $t\le T^\pr_0$,
\begin{align*}
&\norm{v^\pr\Paren{x+\eta^\pr(x,t)\normal^\pr(x,0),t}-v^\pr(x,0)}_{L^\infty_x(\parXi^\pr_0)}\\
\le{}& C\norm{v^\pr\Paren{x+\eta^\pr(x,t)\normal^\pr(x,0),t}-v^\pr(x,0)}_{H^1_x(\parXi^\pr_0)}^\alpha\norm{v^\pr\Paren{x+\eta^\pr(x,t)\normal^\pr(x,0),t}-v^\pr(x,0)}_{H^2_x(\parXi^\pr_0)}^{1-\alpha},
\end{align*}
where $\alpha\in (0,1)$. By applying Minkowski's inequality and the bilinear inequality,  
\begin{align*}
&\norm{v^\pr\Paren{x+\eta^\pr(x,t)\normal^\pr(x,0),t}-v^\pr(x,0)}_{H^1_x(\parXi^\pr_0)}\\
\le{}& \int_{0}^{t}\norm{\partial_tv^\pr\Paren{x+\eta^\pr(x,\tau)\normal^\pr(x,0),\tau}}_{H^1_x(\parXi^\pr_0)}d\tau\\
&+\int_{0}^{t}\norm{\partial_iv^\pr\Paren{x+\eta^\pr(x,\tau)\normal^\pr(x,0),\tau}\partial_t\eta^\pr(x,\tau)\normal^\pr_i(x,0)}_{H^1_x(\parXi^\pr_0)}d\tau\\
\le{}& \int_{0}^{t}\norm{\partial_tv^\pr\Paren{\cdot,\tau}}_{H^1(\parXi^\pr_{\tau})}d\tau+C\int_{0}^{t}\norm{\nabla v^\pr\Paren{\cdot,\tau}}_{H^{\frac32}(\parXi^\pr_{\tau})}\norm{v^\pr_n(\cdot,\tau)}_{H^1(\parXi^\pr_{\tau})}d\tau.
\end{align*} 
The second term can be controlled by $C(C_\star)t$. For the first term, we write the acceleration 
\begin{equation*}
\partial_t v^\pr=\DT v^\pr-v^\pr\cdot\nabla v^\pr, 
\end{equation*}
and it follows that
\begin{align*}
\sup_{\tau\in[0,t)}\norm{\partial_tv^\pr}_{H^1(\parXi^\pr_{\tau})}&\le 
\sup_{\tau\in[0,t)}\Paren{\norm{\DT v^\pr}_{H^1(\parXi^\pr_\tau)}+\norm{v^\pr\cdot\nabla v^\pr}_{H^1(\parXi^\pr_\tau)}}\\
&\le \sup_{\tau\in[0,t)}\Paren{\norm{\DT v^\pr}_{H^{\frac32}(\Xi^\pr_\tau)}+\norm{v^\pr}_{H^{\frac52}(\Xi^\pr_t)}^2}\\
&\le C\sup_{\tau\in[0,t)} \Paren{\sqrt{\Energy^\pr(\tau)}+\Energy^\pr(\tau)}\\
&\le C(C_\star).
\end{align*}
Thus, we obtain
\begin{equation*}
\norm{v^\pr\Paren{x+\eta^\pr(x,t)\normal^\pr(x,0),t}-v^\pr(x,0)}_{H^1_x(\parXi^\pr_0)}\le C(C_\star)t.
\end{equation*}
Since
\begin{align*}
\norm{v^\pr\Paren{x+\eta^\pr(x,t)\normal^\pr(x,0),t}-v^\pr(x,0)}_{H^2_x(\parXi^\pr_0)}&\le \norm{v^\pr\Paren{\cdot,t}}_{H^2(\parXi^\pr_{t})}+\norm{v^\pr(\cdot,0)}_{H^2(\parXi^\pr_0)}\\
&\le C\sup_{\tau\in[0,t)}  \sqrt{\Energy^\pr(\tau)}\\
&\le C(C_\star),
\end{align*}
we conclude that
\begin{equation*}
\norm{v^\pr\Paren{x+\eta^\pr(x,t)\normal^\pr(x,0),t}-v^\pr(x,0)}_{L^\infty_x(\parXi^\pr_0)}\le C(C_\star)t^\alpha.
\end{equation*}
In the same way, by applying  $\DT \normal^\pr=-\Paren{\Bdnabla v^\pr}^\top \normal^\pr$ in \eqref{eq_DTn}, we obtain 
\begin{equation*}
\norm{v^\pr(x,0)\cdot\Bracket{\normal^\pr\Paren{x+\eta^\pr(x,t)\normal^\pr(x,0),t}-\normal^\pr(x,0)}}_{L^\infty_x(\parXi^\pr_0)}\le C(C_\star)t^\alpha,
\end{equation*}
and therefore
\begin{equation*}
\norm{v_n^\pr\Paren{x+\eta^\pr(x,t)\normal^\pr(x,0),t}-v_n^\pr(x,0)}_{L^\infty_x(\parXi^\pr_0)}\le C(C_\star)t^\alpha.
\end{equation*}
Combining this with \eqref{touch}, the initial velocity in \eqref{eqEulerbackmodi5} satisfies
\begin{equation*}
v^\pr_n(\cdot,0)>C_\pm,\ \text{on}\ \cov_0^{+}\cap \parXi^\pr_0\ \text{and}\ \cov_0^{-}\cap \parXi^\pr_0.
\end{equation*}
As a consequence, even if the initial boundary velocity that drives the two boundaries apart starts to decrease from time zero, we can still obtain a lower bound for the time at which the velocity diminishes to zero, and the direction reverses. That is, the lower bound for the turning time is
\begin{equation*}
T^\pr_{\text{turn}}\ge \Paren{\frac{C_\pm}{C(C_\star)}}^{\frac{1}{\alpha}}>0.
\end{equation*}
Therefore, we have obtained the $\pr$-independent lower bound.  

Since the quantity that measures the degree to which the boundary approaches self-intersection, $\assone^\pr$, could decrease only if the two boundary segments of $\parXi^\pr_t$ continued to move toward each other after the separation of the self-intersection point $-x_0\in \parXi_\ddagger$, our analysis above shows that, once this point separates, the corresponding boundary portions remain apart at least up to the turning time $T^\pr_{\text{turn}}$. Consequently, the measure $\assone^\pr$ does not decrease. This, in turn, indicates that the entire boundary does not develop a stronger tendency toward self-intersection than that present in the initial configuration (see Fig.\,\ref{f15}). We conclude that
\begin{equation*}
T^\pr_0\ge T^\pr_{\text{turn}}\ge \Paren{\frac{C_\pm}{C(C_\star)}}^{\frac{1}{\alpha}},
\end{equation*}
because any time at which $\assone^\pr$ could decrease must necessarily lie beyond the lower bound of the turning time.

We conclude that there exists a lower bound $T_\star>0$ independent of $\pr$, such that
\begin{equation*}
T_0^\pr\ge T_\star\ \text{and}\
\sup_{t\in[0,T_\star]}\Energy^\pr(t)\le C(C_\star).
\end{equation*}
Moreover, we deduce from \eqref{eqEulerbackmodi1} that
\begin{equation*}
\norm{q^\pr}_{H^{\frac {5}{2}}(\Xi^\pr_t)}^2 +\norm{\sff^\pr}_{H^{2}(\parXi^\pr_t)}^2\le C(C_\star).
\end{equation*} 
Summarizing the above arguments, the proof is complete.
\end{proof}
\begin{remark}
In the proof of Proposition \ref{pro_5.1}, the key point is that, although $\assone^\pr_0$ depends on the parameter $\pr$, we can show that the time during which the velocity does not change direction has a lower bound.
\end{remark} 
\subsection{Proof of Theorem \ref{thm_main3}}
Finally, we prove the third main result in this paper.
\begin{proof}[Proof of Theorem \ref{thm_main3}]
By \eqref{eq_changeve}, we derive the following uniform estimates: on the time interval $[-T_\star,0]$, the free boundary $\parOmega^\pr_t$ does not self-intersect, and
\begin{equation*} 
\sup_{t\in[-T_\star,0]}\Paren{\norm{u^\pr}_{H^3(\Omega^\pr_t)}^2+\norm{h^\pr}_{H^3(\Omega^\pr_t)}^2+\norm{\parOmega^\pr_t}_{H^4}^2}\le C_\star.
\end{equation*}  
The estimates obtained above permit the use of the weak compactness argument in \cite[Section 8]{Coutand2014}. More precisely, after pulling the approximate domains and fields back to the fixed finite atlas, the uniform $H^4$ bounds for the boundaries and the uniform $H^3$ bounds for $u^\pr$ and $h^\pr$ yield a subsequence converging at $t=-T_\star$. As in \cite[Section 8]{Coutand2014}, the limiting coordinate charts define a regular non-self-intersecting fluid domain $\Omega_{-T_\star}$, locally lying on one side of its boundary, since the separation estimates for the approximate boundaries pass to the limit. The corresponding weak $H^3$ limits give the initial velocity and magnetic fields $u_{-T_\star}$ and $h_{-T_\star}$, and the divergence-free and tangential magnetic constraints are inherited from the approximating sequence. The verification of the chartwise compatibility, regularity, and non-self-intersection of the limiting initial domain follows the arguments in \cite[Section 8]{Coutand2014}, with the magnetic field carried along as an additional uniformly bounded $H^3$ field, and is therefore omitted. Evolving MHD equations \eqref{eq_MHD} forward from the initial data $\Paren{u_{-T_\star},h_{-T_\star},\Omega_{-T_\star}}$ produces the self-intersecting domain $\Omega_\ddagger$ with velocity and magnetic fields $u_\ddagger$ and $h_\ddagger$ at the final time $t=0$. This completes the proof. 
\end{proof}

\noindent\textbf{Acknowledgment}
S. Yang was supported by the Beijing Natural Science Foundation under Grant No. 1264051, the Beijing Postdoctoral Research Activity Funding Program, and the Funding Program for Newly Recruited Young Teachers of Beijing University of Technology. T. Luo was supported by a General Research Fund of the Research Grants Council of Hong Kong (Grant No. 11313025).
\medskip

\noindent\textbf{Data Availability} Data sharing is not applicable to this article as no datasets were generated or analyzed during the current study.
\medskip

\noindent\textbf{Declarations}
\medskip

\noindent\textbf{Conflict of interest} The authors declare that there is no conflict of interest.

\end{document}